\DeclareMathAlphabet{\pazocal}{OMS}{zplm}{m}{n}
\let\mathcal\pazocal
\spnewtheorem{theorem}{Theorem}{\bfseries}{\itshape}
\spnewtheorem{corollary}[theorem]{Corollary}{\bfseries}{\itshape}
\spnewtheorem{lemma}[theorem]{Lemma}{\bfseries}{\itshape}
\spnewtheorem{proposition}[theorem]{Proposition}{\bfseries}{\itshape}
\spnewtheorem{definition}[theorem]{Definition}{\bfseries}{\itshape}
\spnewtheorem{remark}[theorem]{Remark}{\bfseries}{\upshape}
\spnewtheorem{assumption}[theorem]{Assumption}{\bfseries}{\itshape}
\spnewtheorem{algo}[theorem]{Algorithm}{\bfseries}{\itshape}
\renewcommand{\paragraph}[1]{{\smallskip\noindent\textbf{#1.}}}
\definecolor{myred}{rgb}{0.8,0,0}  
\newcommand{\neu}{\color{blue} }
{\vskip\baselineskip\noindent\textbf{Proof of {#1}:}}%
{\hspace*{.1pt}\hspace*{\fill}\BOX\vskip\baselineskip}
\def \R{\mathbb{R}}               
\def \N{\mathbb{N}}               
\def \1{{\bf 1}}                
\def \0{{\bf 0}}
\newcommand\norm[1]{\left\lVert#1\right\rVert}
\def \trace{\operatorname{tr}}
\def \diag{\operatorname{diag}}
\definecolor{myred}{rgb}{0.9,0,0}  
\definecolor{mygreen}{rgb}{0,0.7,0}  
\definecolor{myblue}{rgb}{0.2,0,0.8}  
\definecolor{orange}{rgb}{1,0.6,0}  
\definecolor{olive}{rgb}{0.5,0.5,0}  
\definecolor{mylila}{rgb}{0.8,0.5,0.2}
\def \Domainspace{\mathcal{D}}  
\def \cgram{\mathcal{G}_C}
\def \ogram{\mathcal{G}_O}
\def \cgramtr{\overline{\mathcal{G}}_C}
\def \ogramtr{\overline{\mathcal{G}}_O}
\def \cfun{\mathcal{E}_C}
\def \ofun{\mathcal{E}_O}
\newcommand{\trans}{\mathcal{T}}
\def \omatrix{C}
\def \one{\mathds{1}}
\def \dimred{\ell}
\def \phx{\text{PHX} }
\def \phxk{\text{PHX}}
\def \phxs{\text{PHX}s }
\def \phxsk{\text{PHX}s}
\def \medium{M}
\def \fluid{F}
\def \outlet{O}
\def \inlet{I}
\def \bottom{B}
\def \interface{J}
\def \Qav{{\overline{Q}}{}}
\def \Qm{\Qav^\medium}
\def \Qf{\Qav^\fluid}
\def \Qout{\Qav^{\outlet}}
\def \Qin{Q^{\inlet}}
\def \QinC{\Qin_C}
\def \QinD{\Qin_D}
\def \Qbottom{\Qav^\bottom}
\def \Qdom{\Qav^\dom}
\def \Qg{Q^{G}}
\def \Qmm{Q^\medium}
\def \Qff{Q^\fluid}
\def \Dm{\Domainspace^\medium}
\def \Df{\Domainspace^\fluid}
\def \Din{\Domainspace^{I}}
\def \Dout{\Domainspace^{\outlet}}
\def \Dbottom{\Domainspace^\bottom}
\def \Dtop{\Domainspace^{T}}
\def \Dleft{\Domainspace^{L}}	
\def \Dright{\Domainspace^{R}}
\def \DInterface{\Domainspace^{\interface}}		
\def \DInterfaceL{\underline{\Domainspace}^{\interface}}
\def \DInterfaceU{{\overline{\Domainspace}}{}^{\interface}}		
\def \Ddom{\Domainspace^{\dom}}
\def \Rp{R^P}
\def \Rb{R^B}
\def \Gp{G^P}
\def \Gb{G^\bottom}
\def \Cav{\omatrix}
\def \OutputM{\Cav^{\medium}}
\def \OutputF{\Cav^{\fluid}}
\def \OutputOut{\Cav^{\outlet}}
\def \OutputBottom{\Cav^{\bottom}}
\def \Outputdom{\Cav^{\dom}}
\def \Ltwo{\mathcal{L}_2}
\def \selcrit{\mathcal{S}}  
\def \rhom{\rho^\medium}
\def \rhof{\rho^\fluid}
\def \kappam{\kappa^\medium}
\def \kappaf{\kappa^\fluid}
\def \cp{c_p}
\def \cpm{\cp^\medium}
\def \cpf{\cp^\fluid}
\def \am{a^\medium}
\def \af{a^\fluid}
\def \betam{\beta^\medium}
\newcommand{\mycaption}[1]{\caption{\footnotesize #1}}
\newcommand{\ncol}{q}
\newcommand{\vconst}{\overline{v}_0}
\newcommand{\heattransfer}{\lambda^{\!G}}
\newcommand{\Celsius}{{\text{\textdegree C}}}
\newcommand{\mat}[1]{{#1}}
\newcommand{\normalvec}{\mathfrak{n}}   
\newcommand{\dom}{\dagger}
\newcommand{\spacecontr}{\mathcal{Y}_C}
\newcommand{\spaceobs}{\mathcal{Y}_O}
\newcommand{\solfun}{\psi}
\newcommand{\textem}[1]{\textbf{#1}}
\journalname{Decisions in Economics and Finance}
\begin{document}

	\title{On the Input-Output Behavior of a Geothermal Energy Storage: Approximations by Model Order Reduction}
	\titlerunning{Approximating the Input-Output Behavior of a Geothermal Energy Storage} 

	\author{Paul Honore Takam  \and Ralf Wunderlich }
	
	\authorrunning{P.H.~Takam, R.~Wunderlich} 
	
	\institute{ Paul Honore Takam / Ralf Wunderlich \at
		Brandenburg University of Technology Cottbus-Senftenberg, Institute of Mathematics, P.O. Box 101344, 03013 Cottbus, Germany;  
		\email{\texttt{takam@b-tu.de} / \texttt{ralf.wunderlich@b-tu.de} }  
	}
	
	\date{}

	\maketitle
	
	\begin{abstract}{
			In this paper we consider a geothermal energy storage in which the  spatio-temporal temperature distribution  is modeled by a  heat equation with a convection term. Such storages often are embedded in residential heating systems and control and management  require the knowledge of some aggregated characteristics of that  temperature distribution in the storage. They describe the input-output behaviour of the storage and the associated energy flows and their response to charging and discharging processes. We aim  to derive an efficient approximative description  of these characteristics by a low-dimensional system of ODEs.  This leads  to a model order reduction problem for a large scale linear system of ODEs arising from the semi-discretization of the heat equation  combined with a linear algebraic output equation. In a first step we approximate the non time-invariant system of ODEs by a linear time-invariant  system. Then we apply Lyapunov balanced truncation model order reduction to approximate the output by a reduced-order system with only a few state equations but almost the same input-output behavior. The paper presents results of extensive numerical experiments showing the efficiency of the applied model order reduction methods. It turns out that only a few suitable chosen ODEs are sufficient to produce good approximations of the input-output behaviour of the storage.
		}
		\keywords{Geothermal energy storage\and Heat equation \and Large-scale systems \and Model order reduction \and Lyapunov balanced truncation  \and Gramians  }
		\subclass{93A15 
			\and 93B11 
			\and 93C05 
			\and 93C15 
			\and 37M99 
		}
	\end{abstract}
	
	\section{Introduction}
	\label{sec:intro}		
	Heating and cooling systems of single buildings as well as for district heating systems manage and mitigate temporal fluctuations of  heat supply and demand by using thermal storage facilities.  They  allow thermal energy  to be stored and to be used hours, days, weeks or months later. This is attractive  for space heating, domestic or process hot water production, or generating electricity. Note that  thermal energy may also be stored in the way of cold.  Thermal storages can significantly increase both the flexibility and the performance of district energy systems and enhancing the integration of intermittent  renewable energy 	sources into thermal networks (see Guelpa and  Verda \cite{guelpa2019thermal}, Kitapbayev et al.~\cite{KITAPBAYEV2015823}).

	Geothermal storages constitute an important class of thermal storages and enable an extremely efficient operation of heating and cooling systems in buildings. Further, they can be used  to mitigate peaks in the electricity grid by converting electrical into heat energy (power to heat). Pooling several geothermal storages within the  framework of a virtual power plant gives the necessary  capacity which allows to participate in the balancing energy market.
	
	\begin{figure}[!h]
		\centering
		\includegraphics[width=0.9\textwidth]{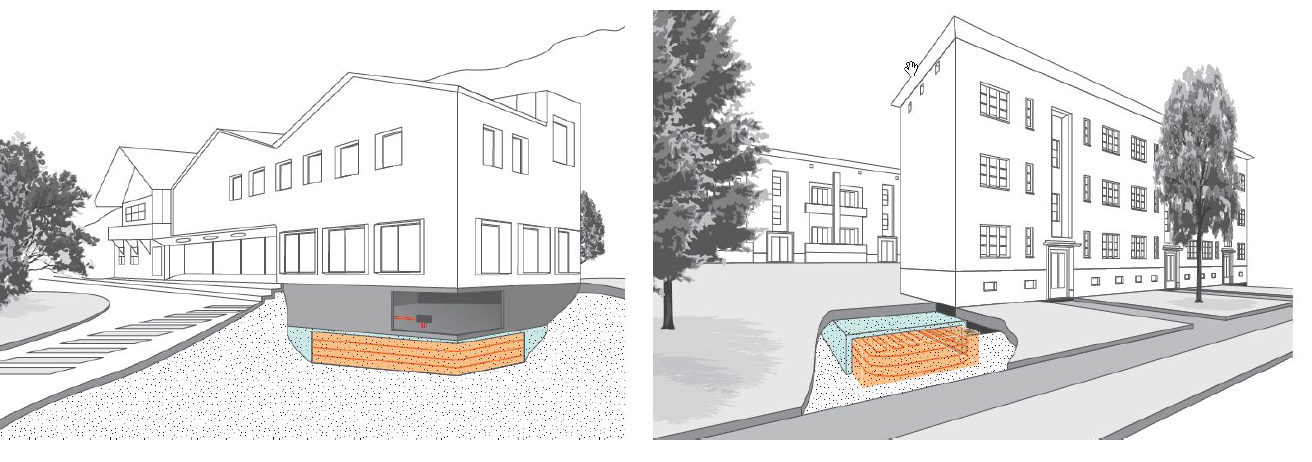}
		\caption[Geothermal storage]{Geothermal storage: in the new building, under a building (left) and in the renovation, aside of  the building (right), see
			\url{www.ezeit-ingenieure.eu}, \url{www.geo-ec.de}.}
		\label{fig:etank}
	\end{figure}

	The present  paper considers  geothermal storages as depicted in  Fig.\ref{fig:etank} and presented in detail in our companion papers \cite{takam2021shorta,takam2021shortb}. Such storages gain more and more importance and are quite attractive for residential heating systems since  construction and maintenance  are relatively inexpensive. Furthermore, they can be integrated both in new buildings and in renovations. For the storage a defined volume is filled with soil and insulated to the surrounding ground.  The thermal energy is stored by raising the temperature of the soil inside the storage. It is charged and discharged via pipe heat exchangers  (\phxk) filled with some fluid such as water. These \phxs are connected to a  residential heating system, in particular to internal storages such as water tanks, or directly to a solar collector. The fluid carrying the thermal energy is moved by pumps and heat pumps where the latter raise the temperature to a higher level using electrical energy.

	A special feature of the considered geothermal storage  is that it is not insulated at the bottom such that thermal energy can also flow into deeper layers   as it can be seen in Fig. \ref{etank_longtermsimu}. This allows for an  extension of the storage capacity since that heat can to some extent be retrieved if the storage is sufficiently discharged  (cooled) and a heat flux back to storage is induced. The unavoidable losses  due to the diffusion to the  environment can by compensated  since such  geothermal storages can benefit from higher temperatures in deeper layers of the ground  and therefore also serve as a production units  similar to  downhole heat exchangers. This becomes interesting during winter since in many regions in Europe the temperature in a depth of only 10 meter  over the year is almost constant around 10${}^\circ C$.	
	
	\begin{figure}[h!]
		\centering  
		\input{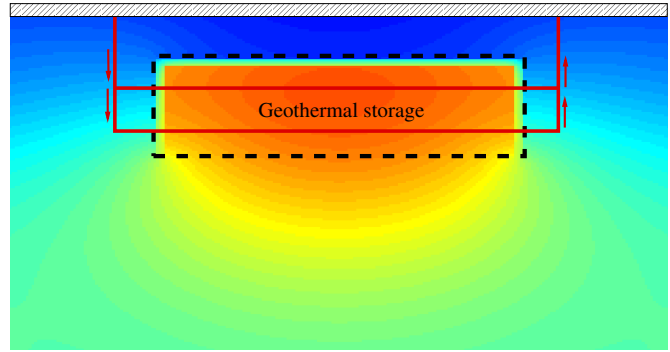}
		\caption{\label{etank_longtermsimu} 
			2D-model of a geothermal storage  insulated to the top and the sides (green bars) while open to the bottom and spatial temperature distribution.}
	\end{figure}
	
	For the operation of a geothermal storage within a residential system the controller or manager of that system need to know certain aggregated characteristics of the spatial temperature distribution in the storage, their dynamics and response to charging and discharging decisions. Note that the latter means to decide if the fluid is pumped through the \phxs to the storage or if it at rest and pumps are off. Further, if pumps are one one has to decide on an appropriate temperature of the fluid. 
	An example of such an aggregated characteristic is the average temperature in the storage medium from which one can derive the  amount of available thermal energy that can be stored in or extracted from the storage 
	Another example is the average temperature at the outlet which allows to determine  the amount of energy injected to or withdrawn from the storage. Further, the average temperature at the bottom of the storage allows to quantify the heat transfer to and from the ground via the open bottom boundary. For more details we refer to  Sec.~\ref{sec:Aggregate}.
	
	The above  aggregated characteristics can be computed by  postprocessing the spatio-temporal  temperature distribution in the storage. The latter can be obtained by solving the governing  linear heat equation with convection and appropriate boundary and interface conditions. This is explained in Secs.~\ref{Geothermal_S} and \ref{Discretization}. There   we work with a 2D model of the storage and use finite difference methods for the semi-discretization of the PDE w.r.t~the spatial variables.  That approach is also known as 'method of lines' and leads to a high-dimensional system of ODEs. We refer to our companion papers \cite{takam2021shorta,takam2021shortb} for details and an stability analysis of the  finite difference scheme  as well as results of extensive numerical experiments.
	Further, we refer to a previous study in  B\"ahr et al.~\cite{bahr2022efficient,bahr2017fast} where the storage was not considered isolated but embedded in the surrounding domain and the interaction between geothermal storage and the environment was studied. In that papers the focus was on the numerical simulation of the long-term behaviour of the spatial temperature distribution. For simplicity charging and discharging was described by a simple source term  but not by \phxsk. 
	The focus of the present paper  is  on the computation of the short-term behaviour of the spatial temperature distribution and its response to charging and discharging processes. We  choose the computational domain to be  the storage  depicted in Fig. \ref{etank_longtermsimu} by a dashed black rectangle. For the sake of simplicity we  do not consider the surrounding medium but  set appropriate boundary conditions to mimic the interaction between storage and  environment.  In addition to \cite{bahr2022efficient,bahr2017fast} we model \phxs which are used to charge and discharge the storage.
	
	The cost-optimal management of residential heating systems equipped with a geothermal storage  can be treated mathematically in terms of optimal control problems. This requires to model  the input-output behavior of the storage, i.e.~the dynamics of the above mentioned aggregated characteristics and their response to charging and discharging processes.  Working with the governing  PDE for the heat propagation or an approximating high-dimensional system of ODEs  becomes then intractable due to the curse of dimensionality.   Therefore one wishes  to describe the input-out behavior of the storage by a suitable low-dimensional system of ODEs with a sufficiently high approximation accuracy. This leads to a problem of model order reduction which is the focus in the present paper.

	In our model assumptions in Sec.~\ref{Geothermal_S} we restrict to the case of a piecewise constant velocity of the fluid in the \phxsk. This is often observed in real-world systems which operate with constant velocity during charging and discharging if pumps are on while the velocity is zero if pumps are off. Then the  high-dimensional system of ODEs constitutes a  system	of $n$  linear non-autonomous ODEs since the system matrices depend on time via the fluid velocity.  The latter varies over time and is only piecewise constant. Thus, the obtained linear system is not  linear time-invariant (LTI). The latter is a crucial assumption for many of model reduction methods.  In Sec.~\ref{sec:analog:system} we circumvent this problem by approximating the model for the geothermal storage by a so-called \emph{analogous model} which is LTI.
	The key idea for the construction of such an analogue is to  mimic the original model by a LTI system where pumps are always on such that  the fluid velocity is constant all the time. During the waiting periods we use at the inlet and outlet boundary the same type of boundary conditions  as during  charging and discharging. However,  we choose the inlet temperature to be equal to the average temperature in the \phxk. Numerical examples presented in \cite{takam2021shortb} show that the analogous system  approximates the original system quite well.
	
	For the derived linear LTI system we apply in Sec.~\ref{model_reduction} the Lyapunov balanced truncation model order  reduction method which is well suited for our  purposes.  It was first introduced by Mullis and Roberts \cite{mullis1976synthesis} and later in the linear systems and control literature by Moore \cite{moore1981principal}.   The idea of that method is first, to transform the system into an appropriate coordinate system for the state-space in which the states that are difficult to reach, that is, require a large input energy to be reached. They  are simultaneously difficult to observe, i.e., produce a small observation output energy. 
	Then, the reduced model is obtained by truncating the states which are simultaneously difficult to reach and to observe.
	Among the various model order reduction methods balanced truncation is characterized by the preservation of several system properties like stability and passivity, see Pernebo and Silverman~\cite{pernebo1982model}. Further, it provides error bounds that permit an appropriate choice of the dimension of the reduced-order model depending on the desired accuracy of the approximation, see Enns~\cite{enns1984model}. 
	
	Besides the Lyapunov balancing method, there exist other types of balancing techniques such as stochastic balancing,  bounded real balancing, positive real balancing and frequency weighted balancing, see Antoulas \cite{antoulas2005approximation} and Gugercin and  Antoulas~\cite{gugercin2004survey}. 
	Gosea et al. \cite{gosea2018balanced} considers balanced truncation for linear switched systems.
	In the book of Benner et al.~\cite{benner2005dimension}, an efficient implementation of model reduction methods such as modal truncation, balanced truncation, and other balancing-related truncation techniques is presented. Further, the authors discussed various aspects of balancing-related techniques for large-scale systems, structured systems, and descriptor systems. The results presented in~\cite{benner2005dimension} also cover the model reduction techniques for time-varying as well as the model reduction for second- and higher-order systems, which can be considered as one of the major research directions in dimension reduction for linear systems. In addition, surveys on system approximation and model reduction can be found in \cite{amsallem2012stabilization,antoulas2005approximation,benner2000balanced,besselink2013comparison,freund2000krylov,gugercin2004survey,kurschner2018balanced,mehrmann2005balanced,redmann2016balancing,volkwein2013proper} and the references therein. 
	
	\smallskip
	The rest of the paper is organized as follow. In Sec.~\ref{Geothermal_S} we describe 
	the mathematical modeling of the geothermal storage. We present the   heat equation with a convection term and appropriate boundary and interface conditions which governs the dynamics of the spatial temperature distribution in the  storage.  Sec.~\ref{Discretization} is devoted to  the finite difference semi-discretization of the heat equation. In Sec.~\ref{sec:Aggregate} we introduce aggregated characteristics of the spatio-temporal temperature distribution.  Sec.~\ref{sec:analog:system} derives the approximate LTI analogous model of the geothermal storage.  
	In Sec.~\ref{model_reduction} we start with  the formulation of the general model reduction problem. Then we  present the  Lyapunov balanced truncation  method. 
	Sec.~\ref{Numerical_exp} presents results of various numerical experiments where  the aggregated characteristics of the temperature distribution in storage for the original model are compared with the approximations obtained from reduced-order models. In Sec.~\ref{conclu} we present some conclusions. An appendix provides a list of frequently used notations and some proofs  which were removed from the main text.

	\section{Dynamics of the Geothermal Storage}
	\label{Geothermal_S}
	The setting is based  on our companion paper \cite[Sec.~2]{takam2021shorta}. For self-containedness  and the convenience of the reader,  we recall in this section the description of the model.
	The dynamics of the  spatial temperature distribution in a geothermal storage can be described mathematically by a linear heat equation with  convection term and appropriate boundary and interface conditions.    
	
	\subsection{2D-Model}
	\label{model2D}
	
	We assume that the domain of the geothermal storage is a cuboid and consider a two-dimensional rectangular cross-section.
	We denote by  $Q=Q(t,x,y)$ the temperature at time $t \in [0,T]$ in the point $(x,y)\in \Domainspace=(0,l_x) \times (0,l_y)$ with $l_x,l_y$ denoting the width and height of the storage. 
	The domain $\Domainspace$ and its boundary $\partial \Domainspace$ are  depicted in Fig.~\ref{bound_cond}.  $\Domainspace$ is divided into three parts. The first is $\Dm$ and filled with a homogeneous medium (soil) characterized by material parameters $\rhom, \kappam$ and $\cpm$ denoting  mass density,    thermal conductivity and   specific heat capacity, respectively. The second is $\Df$, it  represents the\phxs and is filled with a fluid (water) with constant material parameters $\rhof, \kappaf$ and $\cpf$. The fluid moves with time-dependent velocity $v_0(t)$ along the \phxs. For the sake of simplicity we  restrict to the case, often observed in applications, where the pumps moving the fluid are either on or off. Thus the velocity $v_0(t)$ is piecewise constant taking  values  $\vconst>0$ and zero, only.  Finally, the third part is the interface $\DInterface$ between $\Dm$ and $\Df$. 
	For the sake of simplicity  we neglect modeling the wall of the \phx and suppose perfect contact between the \phx and the soil. Details are given below in \eqref{Interface} and \eqref{eq: 13f}. Summarizing we make the following
	\begin{figure}[h!]		
		\begin{center}
			
			\includegraphics[width=0.8\linewidth,height=.6\linewidth]{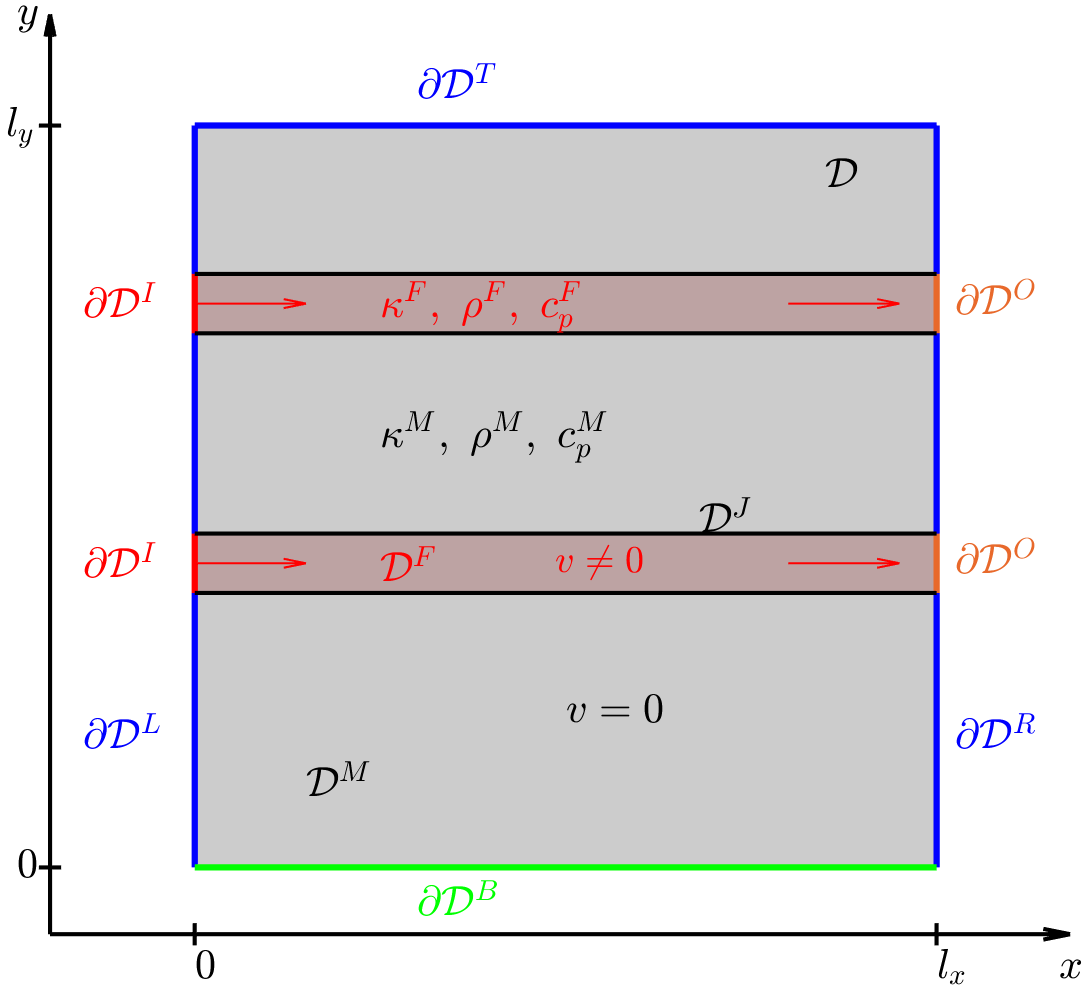}
		\end{center}
		\mycaption{\label{bound_cond} 2D-model of the geothermal storage:  decomposition of the domain $\Domainspace$ and the boundary $\partial \Domainspace $.
		}			
	\end{figure}
	\begin{assumption}
		\label{assum1}~
		\begin{itemize}
			\item [1.] Material parameters of the medium   $\rhom, \kappam, \cpm$ in the domain $\Dm$  and of the fluid  $\rhof, \kappaf, \cpf$ in the domain  $\Df$ are constants.
			\item [2.] Fluid velocity is piecewise constant, i.e. $v_0(t)=\begin{cases}
				\vconst>0,  &\text{pump~on}\\
				0, & \text{pump off}
			\end{cases}$
			\item [3.] Perfect contact at the interface between fluid and  medium.
		\end{itemize}
	\end{assumption}
	
	\begin{remark}\label{rem:3D} Results obtained for our 2D-model, where $\mathcal{D}$ represents the rectangular cross-section of a box-shaped storage can be extended to the 3D-case if we assume that the 3D storage domain is a cuboid of depth $l_z$ with an homogeneous temperature distribution in $z$-direction. 
		A \phx in  the 2D-model then represents a horizontal snake-shaped \phx densely filling a small layer of the storage. 
	\end{remark}

	\paragraph{Heat equation}
	The temperature $Q=Q(t,x,y)$ in the external storage is governed by the linear heat equation with convection term
	\begin{align}
		\rho \cp \frac{\partial Q}{\partial t}={\nabla \cdot (\kappa \nabla Q)}-
		{\rho v \cdot \nabla (\cp Q)},\quad  (t,x,y) \in (0,T]\times \Domainspace \setminus \DInterface,   \label{heat_eq}
	\end{align}
	where the first term  on the right hand side describes diffusion while the second represents convection of the moving fluid in the \phxsk.
	Further,
	$v=v(t,x,y)$ $=v_0(t)(v^x(x,y),v^y(x,y))^{\top}$  denotes  the velocity vector with $(v^x,v^y)^\top$ being the normalized directional vector of the flow. 
	According to Assumption \ref{assum1} the material parameters $\rho,\kappa, \cp$  
	depend on the position $(x,y)$ and take the values $\rhom,\kappam, \cpm$ for points in $\Dm$ (medium) and  $\rhof, \kappaf, \cpf$ in  $\Df$ (fluid).
	
	Note that there are no sources or sinks inside the storage and therefore the above heat equation appears without forcing term.
	Based on this assumption, the heat equation (\ref{heat_eq}) can be written as
	\begin{align}
		\frac{\partial Q}{\partial t}=a\Delta Q-
		{ v \cdot \nabla Q},\quad  (t,x,y) \in (0,T]\times \Domainspace \setminus \DInterface,   \label{heat_eq2}
	\end{align}
	where $\Delta=\frac{\partial^2}{\partial x^2}+\frac{\partial^2}{\partial y^2}$ is the Laplace operator, $\nabla=\big(\frac{\partial}{\partial x},\frac{\partial}{\partial y}\big)$ the gradient operator, and  $ a=a(x,y)$ is the thermal diffusivity which is piecewise constant with values  $a^\dom=\frac{\kappa^\dom}{\rho^\dom \cp^\dom}$  with $\dom=\medium$ for  $(x,y)\in \Dm $ and $\dom=\fluid$   for  $(x,y)\in \Df$, respectively.   
	The initial condition $Q(0,x,y)=Q_0(x,y)$ is given by the initial temperature distribution $Q_0$ of the storage.

	\subsection{Boundary { and Interface} Conditions}
	For the description of the boundary conditions we decompose the boundary $\partial\Domainspace$ into several subsets as depicted in Fig.~\ref{bound_cond} representing the insulation on the top and the side, the open bottom, the inlet and outlet of the \phxsk.  Further, we have to specify conditions at the interface between \phxs and  soil. The inlet, outlet and the interface  conditions model the heating and cooling of the storage via \phxsk. We distinguish between the two regimes 'pump on' and 'pump off' where for simplicity we assume perfect insulation at inlet and outlet if the pump is off.
	This leads to the following boundary conditions.
	
	\begin{itemize}				
		\item \textit{Homogeneous Neumann condition} describing perfect insulation on the top  and the side 
		\begin{align}\frac{\partial Q}{\partial \normalvec}=0,\qquad (x,y)\in 
			\partial \Dtop\cup \partial \Dleft \cup \partial \Dright, 
			\label{Neumann}
		\end{align}
		where $\partial \Dleft=\{0\} \times  [0,l_y] \backslash \partial \Din$, ~
		$\partial \Dright=\{l_x\} \times  [0,l_y] \backslash \partial \Dout, \partial\Dtop=[0,l_x] \times \{l_y\}$  and $\normalvec$ denotes the outer-pointing normal vector.
		\item \textit{Robin condition} describing heat transfer on the bottom 
		\begin{align}
			-\kappam\frac{\partial Q}{\partial \normalvec}=\heattransfer(Q-\Qg(t)), \qquad  (x,y)\in 
			\partial \Dbottom,
			\label{Robin}
		\end{align}
		with $\partial \Dbottom=(0,l_x) \times \{0\}$, where $\heattransfer>0$ denotes the  heat transfer coefficient  and $\Qg(t)$ the underground temperature.	
		\item  \textit{Dirichlet condition} at the inlet if the pump is on ($v_0(t)>0$), i.e.~the fluid arrives at the storage with a given temperature $\Qin(t)$. If the pump is off ($v_0(t)=0$), we set a homogeneous Neumann condition describing perfect insulation.
		\begin{align}
			\begin{cases}
				\begin{array}{rll}	
					Q &=\Qin(t), &\text{ ~pump on,} \\
					\frac{\partial Q}{\partial \normalvec}&=0, &\text{ ~pump off,} 
				\end{array}
			\end{cases} 
			\qquad  (x,y)\in 	\partial \Din.
			\label{input}
		\end{align}
		
		\item \textit{``Do Nothing'' condition} at the outlet in the following sense. If  the  pump is on ($v_0(t)>0$) then the total heat flux directed outwards can be decomposed into a diffusive  heat flux given by $k^{f}\frac{\partial Q}{\partial \normalvec}$ and a convective  heat flux given by $v_0(t) \rhof \cpf Q$.  In our model we can neglect the diffusive heat flux. This leads to a homogeneous Neumann condition
		\begin{align}\frac{\partial Q}{\partial \normalvec}=0,\qquad (x,y)\in 
			\partial \Dout. 
			\label{output}
		\end{align}
		If the pump is off then we  assume  perfect insulation which is also described by the above condition.	
		
		\item \textit{Smooth heat flux} at interface $\DInterface$ between fluid and soil leading to a coupling condition
		\begin{align}
			\kappaf\bigg(\frac{\partial \Qff}{\partial \normalvec }\bigg)=\kappam\bigg(\frac{\partial \Qmm}{\partial \normalvec }\bigg),
			\qquad  (x,y)\in \DInterface.
			\label{Interface}
		\end{align}	
		Here, $\Qff, \Qmm$ denote the temperature of the fluid inside the \phx and of the soil outside the \phxk, respectively.
		Moreover, we assume that the contact between the \phx and the medium is perfect which leads to a smooth transition of a temperature, i.e., we have 
		\begin{align}
			\Qff=\Qmm,\qquad  (x,y)\in \DInterface. \label{eq: 13f}
		\end{align} 
	\end{itemize}
	
	\section{Semi-Discretization of the Heat Equation}
	\label{Discretization}
	We now sketch  the discretization of the  heat equation \eqref{heat_eq2} together with the boundary and interface conditions given in \eqref{Neumann} through \eqref{eq: 13f}. For details we refer to our companion paper \cite[Sec.~3]{takam2021shorta}.
	We confine ourselves to a  semi-discretization in space and approximate only spatial derivatives by their respective finite differences.  This approach is also known as 'method of lines' and leads  to a high-dimensional system of ODEs  for the temperatures at the grid points. The latter will serve as starting point for the model reduction in Sec.~\ref{model_reduction}.   
	For the full discretization in which time is also  discretized we refer to our companion paper \cite[Sec.~4]{takam2021shortb} where we  derive an implicit finite difference scheme and study its stability.

	\subsection{Semi-Discretization of the Heat Equation}
	\label{Semi-Discret}
	\begin{figure}[h!]
		\centering
		
		\begin{center}
			\resizebox{0.7\textwidth}{0.5\textwidth}{%
				\begin{tikzpicture}[thick,scale=1, every node/.style={scale=1}]
					\draw[thick,->] (-1.5,-1.5) -- (8.7,-1.5) node[anchor=north west] {x};
					\draw[thick,->] (-1.5,-1.5) -- (-1.5,8.3) node[anchor=south east] {y};
					\draw[step=1.5cm,black,very thin] (-1.5,-1.5) grid (7.5,7.5);
					\node at (3,3) {$\bullet$};
					\node at (3.5,2.7) {$(i,j)$};
					\node at (3,4.5) {$\bullet$};
					\node at (3.8,4.8) {$(i,j+1)$};
					\node at (3,1.5) {$\bullet$};
					\node at (3.8,1.2) {$(i,j-1)$};
					\node at (4.5,3) {$\bullet$};
					\node at (5.3,2.7) {$(i+1,j)$};
					\node at (1.5,3) {$\bullet$};
					\node at (2.2,2.7) {$(i-1,j)$};
					\node at (6.9,-1.8) {$l_x=N_xh_x$};
					\node at (-2.4,7.5) {$l_y=N_yh_y$};
					\node at (7.5,7.5) {$\bullet$};
					\node at (6.8,7.1) {$(N_x,N_y)$};
					\node at (-1.5,-1.5) {$\bullet$};
					\node at (-1.0,-1.3) {$(0,0)$};	
					\node at (-1.5,7.5) {$\bullet$};
					\node at (-0.9, 7.2) {$(0,N_y)$};
					\node at (7.5,-1.5) {$\bullet$};
					\node at (6.9,-1.3) {$(N_x,0)$};
				\end{tikzpicture}
			}  
		\end{center}
		\mycaption{\label{grid}Computational grid.}	
	\end{figure}
	
	The spatial domain depicted in Fig.~\ref{bound_cond} is discretized by the means of a mesh   with grid points  $(x_i,y_j)$ as in Fig.~\ref{grid} where
	$	x_i =ih_x, ~~~y_j =jh_y,,\quad i ={ 0},...,N_x, ~~~j ={ 0},...,N_y.$
	Here,  $N_x$ and $N_y$ denote the  the number of grid points  while   $h_x={l_x}/{N_x}$ and $h_y={l_y}/{N_y}$  are the step sizes in $x$ and $y$-direction, respectively. 
	We denote by $Q_{ij}(t)\simeq Q(t,x_i,y_j)$ the semi-discrete approximation  of the temperature $Q$   and by $v_0(t)(v^x_{ij},v^y_{ij})^{\top} =v_0(t)(v^x(x_i,y_j),v^y(x_i,y_j))^{\top} =v(t,x_i,y_j)$  the velocity vector  at the grid point $(x_i,y_j)$ at time $t$.

	For the sake of simplification and tractability of our analysis we restrict to the following assumption on the arrangement of \phxs and impose  conditions on the location of grid points  along the \phxsk.
	\begin{assumption}\label{assum2}~%
		\begin{enumerate}
			\item  There are $n_P \in \N$  straight horizontal \phxsk, the fluid moves in positive $x$-direction.
			\item The diameter of the \phxs are such that the interior of \phxs contains grid points.
			\item Each interface between medium and fluid contains grid points.
		\end{enumerate}		
	\end{assumption}	
	
	We approximate the spatial derivatives in the heat equation, the boundary and interface conditions by finite differences as in \cite[Subsec.~3.1--3]{takam2021shorta} where we apply upwind techniques for the convection terms. The result is the system of ODEs \eqref{Matrix_form1} (given below) for a vector function $Y:[0,T]\to \R^n$ collecting  the semi-discrete approximations $Q_{ij}(t)$ of the temperature $Q(t,x_i,y_j)$ in the ``inner'' grid points, i.e., all grid points except those on the boundary $\partial \mathcal{D}$ and the interface $\DInterface$. 
	For a model with $n_P$ \phxs the dimension of $Y$ is   $n=(N_x-1)(N_y-2n_P-1)$, see \cite{takam2021shorta}.

	Using the above notation the semi-discretized  heat equation \eqref{heat_eq2} together with the given initial, boundary and interface conditions reads as
	\begin{align}
		\dot Y(t)= \mat{A}(t)Y(t)+\mat{B}(t)g(t), ~~t \in (0,T],
		\label{Matrix_form1}
	\end{align}
	with the initial condition $Y(0)=y_0$ where the vector $y_0\in \R^n$ contains the initial temperatures $Q_0(\cdot,\cdot)$ at the corresponding grid points.  
	The system matrix  $\mat{A}$  results from the spatial discretization of the convection and diffusion term in the heat equation (\ref{heat_eq2}) together with the Robin and linear heat flux boundary conditions. It has the tridiagonal structure  
	\begin{align}
		\label{matrix_A}
		\mat{A}=\begin{pmatrix}
			\mat{A}_{L} ~&~ \mat{D}^{+} ~& &&&\text{\LARGE0} \\
			\mat{D}^{-} ~&~ \mat{A}_{M} ~&~ \mat{D}^{+} \\
			& \mat{D}^- ~&~ \mat{A}_{M} ~&~ \mat{D}^{+} \\
			&& \ddots &\ddots & \ddots &\\
			& && \mat{D}^- ~&~ \mat{A}_{M}~ &~ \mat{D}^+ \\
			\text{\LARGE0}& & &&~ \mat{D}^- ~&~ \mat{A}_{R}
		\end{pmatrix}
	\end{align}
	and consists of $(N_x-1)\times( N_x-1)$ block matrices of dimension $\ncol=N_y-2n_P-1$.
	The block matrices $\mat{A}_{L},\mat{A}_{M},\mat{A}_{R}$ on the diagonal have a tridiagonal structure and are given
	in \cite[Table 3.1 and B.1]{takam2021shorta}. 		
	The block matrices on the subdiagonals $\mat{D}^{\pm} \in \R^{\ncol \times \ncol}$, $i=1,\ldots, N_x-1$, are  diagonal matrices and given in \cite[Eq.~(3.12)]{takam2021shorta}.

	As a result of the discretization of the  Dirichlet  condition at the inlet boundary and the Robin condition at the bottom boundary, we get the  function  $g:~[0,T] \to \R^2$ called input function and the  $n\times 2$ input matrix $\mat{B}$  called input matrix. The entries of the input matrix  $B_{l\neu r}, ~l=1,\ldots,n,~~ r=1,2,$ are derived in \cite[Subsec 3.4]{takam2021shorta} and are given by 
	\begin{align}\label{eq:input_matrix}
		\begin{array}{rl@{\hspace*{2em}}l}
			B_{l1}&=B_{l1}(t)=\begin{cases}
				\frac{\af}{h^2_x}+\frac{\vconst}{h_x}  & \text{pump on,}\\
				0 & \text{pump off,}
			\end{cases}
			& l=\mathcal{K}(1,j), (x_0,y_j)\in\Din,\\[3ex]	
			B_{l2}& = \frac{\heattransfer h_y}{\kappam+\heattransfer h_y}\betam,
			&  l=\mathcal{K}(i,1), (x_i,y_0)\in\Dbottom,
		\end{array}		
	\end{align}   
	with $\beta^\medium=\am/{h^2_y}$.
	The entries for other $l$ are zero. Here,  $\mathcal{K}$ denotes the mapping   $(i,j)\mapsto  l=\mathcal{K}(i,j)$  of pairs of indices   of  grid point $(x_i,y_j) \in\mathcal{D}$ to the single index $l \in \{1,\ldots,n \}$ of the corresponding entry in the vector $Y$. The input function  reads as
	\begin{align}
		g(t)=\begin{cases}
			(\Qin(t),~\Qg(t))^{\top}, & \quad \text{pump on},\\
			~~~~~(0,~~~~\Qg(t))^{\top}, & \quad \text{pump off}.
		\end{cases}
		\label{eq:input}
	\end{align}
	Recall that  $\Qin$ is the inlet temperature  of the pipe during pumping  and $\Qg$ is the underground temperature.

	\subsection{Stability of Matrix $\mat{A}$}
	\label{stability_A}	
	The finite difference  semi-discretization of the heat equation \eqref{heat_eq2} given by the  system of ODEs \eqref{Matrix_form1} is expected to preserve the dissipativity of the PDE. This property is related to the stability of the system matrix $\mat{A}=\mat{A}(t)$  in the sense that all eigenvalues  of $\mat{A}$ lie in left open complex half plane. That property will play a crucial role below in Sec.~\ref{model_reduction} where we study model reduction techniques for \eqref{Matrix_form1} based on balanced truncation.	The next theorem confirms the expectations on the stability of $\mat{A}$.	For the proof we refer to our companion paper \cite[Theorem 3.3]{takam2021shorta}.
	\begin{theorem}[Stability of Matrix $\mat{A}$]	\label{stable_m} \ \\
		Under Assumption \ref{assum1} on the model and Assumption \ref{assum2} on the discretization,  
		the matrix $\mat{A}=\mat{A}(t)$ given in \eqref{matrix_A} is stable for all $t\in[0,T]$, i.e.,  all eigenvalues $\lambda (\mat{A})$ of $\mat{A}$ lie in   left open complex half plane.  		
	\end{theorem}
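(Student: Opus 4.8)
The plan is to prove the spectral bound from the entrywise sign pattern of $\mat{A}$, using the Gershgorin disc theorem together with an irreducibility argument; this is the most robust route because the convection term destroys symmetry and makes direct energy estimates delicate. The starting observation is that the upwind discretization is designed precisely to preserve the monotonicity structure of the continuous operator: the central-difference stencil for $a\Delta$ puts $-2a(h_x^{-2}+h_y^{-2})$ on the diagonal and the positive weights $a h_x^{-2}$, $a h_y^{-2}$ on the couplings to the four neighbours, while the upwind treatment of $-v\cdot\nabla Q$ (the fluid moving in the positive $x$-direction along each \phx, by Assumption~\ref{assum2}) adds $-\vconst/h_x$ to the diagonal and $+\vconst/h_x$ to the west-neighbour coupling. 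Consequently every off-diagonal entry of $\mat{A}$ is nonnegative, every diagonal entry is strictly negative, and in every interior row the diffusive and convective contributions to the row sum cancel; the homogeneous Neumann conditions on $\partial\Dtop,\partial\Dleft,\partial\Dright$ and at the outlet are realized by eliminating the boundary node against its in-domain neighbour and again leave the row sum zero. Hence $\mat{A}$ is weakly (row) diagonally dominant, and equivalently $-\mat{A}$ is a Z-matrix with $(-\mat{A})\one\ge 0$.

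Next I would identify the rows where strict dominance holds. Eliminating the bottom boundary node through the Robin condition \eqref{Robin} turns the corresponding neighbour coupling into a convex-combination weight strictly smaller than $\am h_y^{-2}$ and moves a positive multiple of $\Qg$ into the input term, so the row sum at each grid point adjacent to $\partial\Dbottom$ drops to $-\tfrac{\heattransfer h_y}{\kappam+\heattransfer h_y}\betam<0$; this happens in both the pump-on and pump-off regimes, so the argument will be uniform in $t$ even though the inlet condition switches between Dirichlet and Neumann. Eliminating the inlet node through the Dirichlet condition \eqref{input} additionally removes the west coupling entirely (its contribution goes into $B_{l1}$ of \eqref{eq:input_matrix}) while the full negative diagonal weight is kept. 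I would then verify irreducibility of $\mat{A}$: the purely diffusive couplings connect every inner grid point to each in-domain neighbour in both directions, and although the \phx interfaces remove the interface nodes from $Y$, eliminating those nodes through \eqref{Interface}--\eqref{eq: 13f} creates direct positive couplings between the fluid-interior nodes and the medium nodes across the interface, so the directed graph of $\mat{A}$ is strongly connected. An irreducible, weakly diagonally dominant matrix that is strictly dominant in at least one row is nonsingular (Taussky), so $0\notin\lambda(\mat{A})$; put differently, $-\mat{A}$ is a nonsingular M-matrix.

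Finally I would conclude by Gershgorin: every $\lambda\in\lambda(\mat{A})$ lies in a disc centred at some $A_{ll}<0$ with radius $\sum_{j\neq l}|A_{lj}|\le|A_{ll}|$, hence in the closed left half-plane, and such a disc meets the imaginary axis only at the origin; since $0$ has been excluded, $\operatorname{Re}\lambda<0$ for every eigenvalue, as claimed. The step I expect to cost the most work is the uniform verification of weak diagonal dominance and nonnegativity of the off-diagonals across the structurally different blocks $\mat{A}_L,\mat{A}_M,\mat{A}_R$ and the coupling blocks $\mat{D}^{\pm}$ of \eqref{matrix_A} --- in particular at the fluid--medium interface, where $\am$ jumps to $\af$ and the elimination of the interface temperatures must be checked to keep all couplings nonnegative, and at corner grid points where two boundary conditions meet. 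A convenient way to organize this bookkeeping is to show directly that $-\mat{A}$ is a nonsingular M-matrix by exhibiting the positive vector $w=\one$ with $(-\mat{A})w\ge 0$, $(-\mat{A})w\neq 0$ and $-\mat{A}$ irreducible; an alternative I would keep in reserve is to estimate the logarithmic norm of $\mat{A}$ by proving that the symmetric part $\tfrac12(\mat{A}+\mat{A}\tr)$ is negative definite, which additionally yields a decay rate but is harder to establish in convection-dominated regimes.
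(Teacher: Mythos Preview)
The paper does not actually prove this theorem; it merely cites the companion paper \cite{takam2021shorta}, Theorem~3.3, for the proof. So there is no in-paper argument to compare against directly.

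That said, your route via Gershgorin discs, weak row diagonal dominance, strict dominance in the bottom-adjacent rows coming from the Robin elimination, and irreducibility (Taussky) is exactly the standard machinery for stability of upwind finite-difference matrices of this type, and it is almost certainly what the companion paper does as well. Your identification of the off-diagonal sign pattern and the row-sum cancellations from Neumann and outlet conditions is accurate, and your final step --- the Gershgorin discs are centred on the negative real axis with radius at most the distance to the origin, so the only possible imaginary-axis eigenvalue is $0$, which Taussky rules out --- is clean and correct.

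The only place where genuine care is required, and you rightly flag it, is the elimination of the interface grid points through \eqref{Interface}--\eqref{eq: 13f}. This is a Schur-complement-type step across a jump in the diffusivity $\am\neq\af$, and one must check explicitly that the resulting direct couplings between fluid-interior and medium nodes stay nonnegative and that the diagonal picks up enough to keep the row sums nonpositive. This is bookkeeping rather than a conceptual obstacle, but it is where a proof can silently go wrong, so spell it out row by row for the two interface-adjacent layers. The alternative you keep in reserve (negative definiteness of $\tfrac12(\mat{A}+\mat{A}^{\top})$) is indeed harder here because the convection skews the matrix, so stick with the M-matrix argument.
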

	
	\section{Aggregated Characteristics}	
	\label{sec:Aggregate}
	The numerical methods introduced in Sec.~\ref{Discretization}	 allow the approximate  computation of  the  spatio-temporal temperature distribution in the geothermal storage. In many applications it is  not necessary to know  the complete information about that distribution. An example is the management and control of a storage which is embedded into a residential heating system. Here it is sufficient to know  only a few aggregated characteristics of the temperature distribution which can be computed via  post-processing. In this section we introduce some of these aggregated characteristics and describe their approximate computation  based on the solution vector $Y$ of the finite difference scheme.
	
	\subsection{Aggregated Characteristics Related to the Amount of Stored Energy}
	
	We start with aggregated characteristics given by the average temperature in some given subdomain of the storage which are related to the amount of stored energy in that domain. 
	
	Let  $\mathcal{B}\subset \mathcal{D}$ be a  generic  subset of the 2D computational domain. We denote by $|\mathcal{B}|=\iint_{\mathcal{B}} dxdy$ the area of $\mathcal{B}$. 
	Then
	$W_{\mathcal{B}}(t)=l_z \iint_{\mathcal{B}} \rho \cp Q(t,x,y) dxdy$ represents the thermal energy contained in the 3D spatial domain $\mathcal{B}\times[0,l_z]$ at time $t\in[0,T]$. Then for $0\le t_0<t_1\le T$ the difference $G_{\mathcal{B}}(t_0,t_1)=W_{\mathcal{B}}(t_1)-W_{\mathcal{B}}(t_0)$ is the gain of thermal energy during the period $[t_0,t_1]$. While positive values correspond to warming of $\mathcal{B}$,  negative values indicate cooling and  $-G_{\mathcal{B}}(t_0,t_1)$ represents the size of the  loss of thermal energy. 
	
	For $\mathcal{B}=\Ddom, \dom=\medium,\fluid$, we can use that the material parameters on $\mathcal{D}^\dom$ equal  the constants  $\rho=\rho^\dom,\cp=\cp^\dom$. Thus,  for the corresponding gain of thermal energy we obtain
	\begin{align}
		\nonumber
		G^\dom=G^\dom(t_0,t_1)& := G_{\Ddom}(t_0,t_1) = \rho^\dom \cp^\dom|\Ddom| l_z~(\Qdom(t_1)-\Qdom(t_0)),\\
		\text{where}\quad 
		\Qdom(t) &= \frac{1}{|\Ddom|} \iint_{\Ddom} Q(t,x,y) dxdy, \quad \dom=\medium,\fluid,
		\label{eq:av:medium:fluid}	
	\end{align}
	denotes the  average temperature in the medium ($\dom=\medium$) and the fluid  ($\dom=\fluid$), respectively.

	\subsection{Aggregated Characteristics Related to the Heat Flux at the Boundary}
	\label{subsec:AggCharBoundary}
	Now we consider the convective heat flux at the inlet and outlet boundary and the  heat transfer at the bottom boundary. 
	Let  $\mathcal{C}\subset \mathcal{\partial D} $ be a generic curve on the boundary,  then  we denote by $|\mathcal{C}|=\int_{\mathcal{C}} ds$ the curve length. 
	
	The rate  at which the energy is injected or withdrawn via the \phx is given by 
	\begin{align}
		\nonumber	
		\Rp(t)&=\rhof \cpf v_0(t) \Big[\int_{\Din}  Q(t,x,y)\, ds  -\int_{\Dout} Q(t,x,y)\, ds\Big]\\
		\label{Rout}	
		&=\rhof \cpf v_0(t) |\partial  \Dout|[\Qin(t) -\Qout(t)],\\
		\nonumber	
		\text{where}\quad 
		\Qout(t) &=	 \frac{1}{|\partial \Dout|} \int_{\partial \Dout} Q(t,x,y)ds
	\end{align}
	is the average temperature at the outlet boundary. Here, it is used that in our model we have  horizontal \phxs such that  $|\partial \Din|=|\partial \Dout|$ and a uniformly distributed  inlet temperature at the inlet boundary $\partial \Din$. For a given interval of time $[t_0,t_1]$ the quantity 
	$$\Gp=\Gp(t_0,t_1) =l_z\int_{t_0}^{t_1} \Rp(t)\, dt$$ 
	describes the amount of heat injected  ($G^P>0$)  to or withdrawn ($G^P<0$) from the storage due to convection of the   fluid. Note that the fluid moves at time $t$ with velocity $v_0(t)$ and arrives at the inlet with temperature $\Qin(t)$ while it leaves at the outlet with  the average temperature $\Qout(t)$.
	
	Next we look at the  diffusive heat transfer via the bottom boundary and define the rate 
	\begin{align}
		\nonumber
		\Rb(t)&=  \int_{\Dbottom}\kappam\frac{\partial Q}{\partial \normalvec}\, ds = \int_{\Dbottom}\heattransfer(\Qg(t)-Q(t,x,y))\, ds	\\[0.5ex]
		\label{RB}		
		& = \heattransfer |\partial  \Dbottom| (\Qg(t)- \Qbottom(t) ),\\
		\nonumber	
		\text{where}\quad 
		\Qbottom(t) &=	 \frac{1}{|\partial \Dbottom|} \int_{\partial \Dbottom} Q(t,x,y)ds,
	\end{align}
	is the average temperature at the bottom boundary.
	Note that the second equation in the first line  follows from the Robin boundary condition.
	The quantity $$\Gb=\Gb(t_0,t_1) =l_z\int_{t_0}^{t_1} \Rb(t)\, dt$$ describes the amount of  heat  transferred via the bottom boundary of the storage.

	\subsection{Numerical  Computation of Aggregated Characteristics}	 
	For the numerical simulation of the  aggregated characteristics introduced in the previous subsections these quantities have to be expressed in terms of the   finite difference  approximations of the temperature $Q=Q(t,x,y)$. Then one obtains approximations in terms of  the entries of the vector function $Y(t)$ satisfying  the system of ODEs \eqref{Matrix_form1} and containing the semi-discrete finite difference approximations of the temperature in the inner grid points of the computational domain $\mathcal{D}$. Recall that  the temperatures  on boundary and interface grid points can be determined by  linear combinations from the entries of $Y(t)$. 
	
	Let $\Qdom(t)$ be one of the average temperatures $\Qm,\Qf,\Qout,\Qbottom$. Then the numerical approximation of the defining single and double integrals by quadrature rules leads to approximations by linear combinations of the entries of $Y$  of the form 
	\begin{align}
		\label{aggr_char}
		\Qdom(t) & \approx    \Outputdom\,Y(t)
	\end{align}
	where $\Outputdom$ is some $1\times n$-matrix. 		
	For the details we refer to  our companion paper  \cite[Subsection 4.2, Appendix B]{takam2021shortb}.

	\section{Analogous Linear Time-Invariant  System}
	\label{sec:analog:system}
	Eq.~\eqref{Matrix_form1} represents a  system	of $n$  linear non-autonomous ODEs. Since some of the entries in the  matrices $\mat{A}$ and $\mat{B}$ resulting from the discretization of convection terms in the heat equation \eqref{heat_eq2} depend on the velocity $v_0(t)$, it follows that  $\mat{A}$ and $\mat{B}$  are time-dependent.
	Thus, \eqref{Matrix_form1} does not constitute a linear time-invariant (LTI) system. The latter is a crucial assumption for many of model reduction methods such as the Lyapunov balanced truncation technique that is  considered below in Sec.~\ref{model_reduction}.  We circumvent this problem by  replacing the model for the geothermal storage by a so-called \emph{analogous model} which is LTI. 
	
	The key idea for the construction of such an analogue is based on the observation that under the assumption of this paper our ``original model'' is already piecewise LTI. This is due to our assumption that the  fluid velocity is constant $\vconst$ during (dis)charging when the pump is on, and zero during waiting when the pump is off. This leads to the following  approximation of the original by an analogous model which is performed in two steps.
	
	\paragraph{Approximation Step 1}
	For the analogous model we assume that contrary to the original model the fluid is also moving with constant velocity $\vconst$ during pump-off periods. 
	During these waiting periods   in the original model the fluid is at rest and only subject to the diffusive propagation of heat.   In order to mimic that behavior of the resting fluid by a moving fluid we assume that the temperature $\Qin$ at the \phx inlet is equal to the average temperature of the fluid in the \phx $\Qf$. From a physical point of  view we will preserve the average temperature of the fluid but a potential  temperature gradient along the \phx is  not preserved  and replaced by an almost flat temperature distribution. It can be expected that the error induced by this ``mixing'' of the fluid temperature in the \phx is small after sufficiently long (dis)charging periods leading to saturation with an almost constant temperature along the \phxk.

	In the mathematical description by an initial boundary value problem for the heat equation \eqref{heat_eq2}, the above approximation leads to a modified boundary condition at the inlet. During waiting  the homogeneous Neumann boundary condition in \eqref{input} is replaced  by a non-local coupling condition such that the inlet boundary condition reads as 
	\begin{align}
		Q=
		\begin{cases}
			\begin{array}{ll}	
				\Qin(t), &\text{ ~pump on,} \\
				\Qf(t), &\text{ ~pump off,} 
			\end{array}
		\end{cases} 
		\qquad  (x,y)\in 	\partial \Din.
		\label{input_analog}
	\end{align}
	That condition is termed 'non-local' since the inlet temperature is not only specified by a  condition to the local temperature distribution at the inlet boundary $\partial \Din$ but it depends on the whole  spatial temperature distribution  in the fluid domain $\Df$. Semi-discretization of the above boundary condition using approximation  \eqref{eq:av:medium:fluid}	 of  the average fluid temperature $\Qf$ formally leads to a modification of the   input term $g(t)$ of  the system of ODEs \eqref{Matrix_form1} given in \eqref{eq:input}. That input term now reads as
	\begin{align}
		g(t)=\begin{cases}
			(\Qin(t),~\Qg(t))^{\top}, & \quad \text{pump on},\\
			~~(\OutputF \,Y,~~\Qg(t))^{\top}, & \quad \text{pump off}.
		\end{cases}
		\label{eq:input_analog}
	\end{align}
	Further, the non-zero entries  $B_{l1}$ of the input matrix $B$ given in \eqref{eq:input_matrix} are modified. They are now no longer time-dependent but given by the constant
	$		B_{l1}=\frac{\af}{h^2_x}+\frac{\vconst}{h_x} $ which was already used during pump-on periods.
	
	\paragraph{Approximation Step 2}
	From \eqref{eq:input_analog} it can be seen that the input term $g$  during pumping depends on the state vector $Y$ via $\OutputF \,Y$ and can no longer considered as exogenous.  Formally, the term  $\OutputF \,Y$ has to be included in $\mat{A}Y$ which would  lead to an additional contribution  to the system matrix  $\mat{A}$ given by $\mat{B}_{\bullet1}\OutputF$ where $\mat{B}_{\bullet1}$ denotes the first column of $\mat{B}$. Thus, the system matrix again would be time-dependent and the system not LTI. For the application of  model reduction methods   we therefore perform a second approximation step and treat  $\Qf$ as an exogenously given quantity  (such as $\Qin$ and $\Qg$). This leads to a tractable approach since the Lyapunov balanced truncation technique applied in the next section generates low-dimensional systems depending only on the system matrices $\mat{A,B}$ but not on the input term $g$. Further, from an algorithmic or implementation point of view this is not a problem since given the solution $Y$ of \eqref{Matrix_form1} at time $t$, the average fluid temperature $\Qf(t)$  can be computed as a linear combination of the entries of $Y(t)$. 
	
	\smallskip
	In our companion paper \cite[Sec.~6]{takam2021shortb} we present results of numerical experiments indicating that apart  from some small approximation errors in the \phx during waiting periods, in particular at the outlet, the other deviations are negligible. They also show  that during the (dis)charging periods the errors decrease and  vanish almost completely, i.e., in the long run there is no accumulation of errors.

	\section{Model Order Reduction}
	\label{model_reduction}
	\subsection{Problem}
	\label{MOP_intro}	
	In the previous sections we have seen that   the spatio-temporal temperature distribution describing the input-output behavior of the geothermal storage can be approximately computed by solving the system of ODEs \eqref{Matrix_form1} for the $n$-dimensional function $Y(t)$ resulting from semi-discretization of the heat equation \eqref{heat_eq2}.  Aggregated characteristics can be obtained by linear combinations of the entries of $Y$ in a  post-processing step, see Sec.~\ref{sec:Aggregate}.  In the following we will work with the approximation by an analogous system introduced in Sec.~\ref{sec:analog:system}. Then the input-output behavior of the geothermal storage can be described  by a LTI system, i.e.,~a pair of a linear autonomous differential and a linear algebraic equation which is well-known from linear system and control  theory  and of the form 
	\begin{align}
		\begin{array}{rl}
			\dot{Y}(t)&=\mat{A}\,Y(t)+ \mat{B}\,g(t), \\				
			Z(t)&=\mat{\omatrix}\, Y(t).
		\end{array}
		\label{sys_org}
	\end{align}
	Here, $\mat{A} \in \R^{n \times n}, \mat{B} \in \R^{n \times m}, \mat{\omatrix} \in \R^{n_0 \times n}$  for  $n,m,n_o\in\N$ are called  \textit{system, input, output matrix}, respectively. Further,   $~g: [0,T] \to \R^{m}$  is the \textit{input} (or control),  $Y:[0,T]\to \R^n $ the state and $Z:[0,T] \to  \R^{n_o}$ is the \textit{output}. Given some initial value $Y(0)=y_0$ the input-output behavior, i.e., the mapping of the input $g$ to the output $Z$ is fully described by the   triple of matrices $(\mat{A,B,\omatrix})$ which  is called \textit{realization} of the above system. 
	
	For the analogous system  $\mat{A}$ and $\mat{B}$ are constant matrices which are given in \eqref{matrix_A} and \eqref{eq:input_matrix} for the case of constant velocity $v_0(t)=\vconst$, i.e., the pump is on. From  Theorem \ref{stable_m} we know  that  $\mat{A}$ is stable. The input dimension is $m=2$ while the dimension $n$ of the state depends on the discretization of the spatial domain $\mathcal{D}$. The two entries   of the input $g$ are the temperatures at the inlet and of the underground at the bottom boundary. Thus $g$ is piecewise continuous and bounded. The output $Z$ contains the desired aggregated characteristics such as the average temperatures $\Qdom$ of the medium, the fluid, at the outlet or the bottom boundary.  The associated row matrices $\Outputdom$ of the approximation $\Qdom(t)= \Outputdom Y(t)$ given in \eqref{aggr_char} form the $n_o$ rows of the output matrix $C$. The output dimension ${n_o}$ is the number of characteristics included in the problem and typically small while the state dimension $n$  will be very large in order to obtain a reasonable accuracy for the semi-discretized solution of the heat equation \eqref{heat_eq2}.
	
	For the above systems with high-dimensional state the simulation of the input-output behavior  and the solution of optimal control problems suffer from the curse of dimensionality because of the computational complexity and storage requirements. This motivates us to apply model order reduction (MOR).

	The general goal of MOR is to approximate the high-dimensional linear time-invariant system \eqref{sys_org} given by the realization $(\mat{A,B,\omatrix})$ by a low-dimensional reduced-order system 	
	\begin{align}
		\begin{array}{rl}
			\dot{\widetilde{Y}}(t)&=\widetilde{\mat{A}}\,\widetilde{Y}(t)+ \widetilde{\mat{B}}g(t) \\[0.5ex]
			\widetilde{Z}(t)&=\widetilde{\mat{\omatrix}}\,\widetilde{Y}(t),
		\end{array}
		\label{sys_red}
	\end{align}
	i.e., a realization $(\widetilde{\mat{A}},\widetilde{\mat{B}},\widetilde{\mat{\omatrix}})$		
	where $\widetilde{\mat{A}} \in \R^{\dimred \times \dimred},~\widetilde{\mat{B}} \in \R^{\dimred \times m}, ~\widetilde{\mat{\omatrix}} \in \R^{n_o \times \dimred},~\widetilde{Y} \in \R^{\dimred},$ $\widetilde{Z} \in \R^{n_o}$ and  $ \dimred\ll n$ denotes the  the dimension of the reduced-order state. We notice that the input variable $g$ is the same for systems \eqref{sys_org} and \eqref{sys_red}.
	The reduced-order system should capture the most dominant dynamics of the original system in particular preserve the main physical system properties, e.g.,  stability. Further, it should provide a  reasonable approximation of the original  output $Z$ by $\widetilde Z$ to given input $g$ where the output error  $Z-\widetilde Z$ is measured using a suitable norm.
	In addition, the computation of the	reduced-order system should be numerically stable and efficient. 
	
	Below we will derive a reduced-order model using Lyapunov balanced truncation. That method belongs to projection-based methods which are sketched next.
	
	\subsection{Projection-Based Methods}  
	\label{subsec:projection}
	The underlying idea of projection-based methods is that the state dynamics can be well approximated by the dynamics of a projection of the $n$-dimensional state $Y$ onto a  a suitable low-dimensional subspace of $\R^n$ of dimension $\dimred<n$. Then the aim is to describe the dynamics of the projection by a $\dimred$-dimensional system of ODEs. Prominent examples are the modal truncation and balanced truncation method, u  see  Antouslas \cite[Secs. 7, 9.2]{antoulas2005approximation}.
	Here, the projection is found by applying a suitable linear state \textit{transformation} $\overline Y=\trans Y$ with some non-singular $n\times n$-matrix $\trans$ which allows to define the projection by \textit{truncation} of $\overline Y$, i.e., replacing the last $n-\dimred$ entries by zero.

	\paragraph{Transformation} The above mentioned transformation allows to derive the following alternative equivalent realization of the system \eqref{sys_org} which is proven in Appendix~\ref{append_d}. 
	\begin{lemma}~
		\label{transform_lemma}			
		Let $\trans$ be a $n \times n$ constant non-singular transformation matrix. If we define the transformation $\overline{Y}=\trans Y$, then the  state and  output equation in \eqref{sys_org} become
		\begin{equation}
			\label{sys_trans}
			\begin{aligned}									
				\dot{\overline{Y}}(t)&=\overline{\mat{A}}~\overline{Y}(t)+ \overline{\mat{B}}g(t), \\
				Z(t) &=\overline{\mat{\omatrix}}~\overline{Y}(t).										
			\end{aligned}							
		\end{equation} 	
		
		The realization of the system is given by $(\overline{\mat{A}},\overline{\mat{B}},\overline{\mat{\omatrix}})$ where
		$$\overline{\mat{A}}=\trans\mat{A}\trans^{-1}, ~ \quad \overline{\mat{B}}=\trans\mat{B} \quad \text{and} \quad \overline{\mat{\omatrix}}=\mat{\omatrix} \trans^{-1}.$$					
	\end{lemma}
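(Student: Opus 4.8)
The plan is to substitute the change of variables directly into the state and output equations of \eqref{sys_org}. First I would observe that since $\trans$ is a constant, non-singular $n\times n$ matrix, the relation $\overline Y=\trans Y$ is equivalent to $Y=\trans^{-1}\overline Y$, so the two descriptions carry exactly the same information; and because the entries of $\trans$ do not depend on $t$, differentiating $\overline Y(t)=\trans Y(t)$ gives $\dot{\overline Y}(t)=\trans\dot Y(t)$ with no additional term.

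Next I would plug in the state equation $\dot Y=\mat{A}Y+\mat{B}g$ from \eqref{sys_org} and replace every occurrence of $Y$ by $\trans^{-1}\overline Y$, obtaining $\dot{\overline Y}(t)=\trans\big(\mat{A}\trans^{-1}\overline Y(t)+\mat{B}g(t)\big)=\trans\mat{A}\trans^{-1}\,\overline Y(t)+\trans\mat{B}\,g(t)$, which is the first line of \eqref{sys_trans} with $\overline{\mat{A}}=\trans\mat{A}\trans^{-1}$ and $\overline{\mat{B}}=\trans\mat{B}$. For the output, substituting $Y=\trans^{-1}\overline Y$ into $Z=\mat{\omatrix}Y$ yields $Z(t)=\mat{\omatrix}\trans^{-1}\,\overline Y(t)$, i.e.\ $\overline{\mat{\omatrix}}=\mat{\omatrix}\trans^{-1}$; and the initial condition becomes $\overline Y(0)=\trans y_0$. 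Conversely, multiplying the transformed system on the left by $\trans^{-1}$ (and using $Y=\trans^{-1}\overline Y$) recovers \eqref{sys_org}, so the two realizations are genuinely equivalent and produce the same output $Z$ for the same input $g$.

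I do not expect any real obstacle here: the argument is a one-line matrix manipulation. The only points that need to be flagged are precisely the two hypotheses of the lemma — that $\trans$ is invertible, so that $\trans^{-1}$ exists and the inverse change of variables is well defined, and that $\trans$ is time-independent, so that $\dot{\overline Y}=\trans\dot Y$ holds without a spurious $\dot\trans\,Y$ contribution. Both are assumed, so the computation goes through directly.
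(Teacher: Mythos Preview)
Your proposal is correct and follows essentially the same direct-substitution approach as the paper's proof: the paper writes $\dot Y=\trans^{-1}\dot{\overline Y}$, substitutes into \eqref{sys_org}, and then left-multiplies by $\trans$, which is just your computation $\dot{\overline Y}=\trans\dot Y$ read from the other side. The extra remarks you add (transformed initial condition, the converse direction) are fine but not needed for the lemma as stated.
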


	\paragraph{Truncation}		After transforming the system we proceed to the truncation step.
	It is assumed that the transformation $\trans$ is such that the first $\dimred$ entries of the transformed state $\overline{Y}$ forming the $\dimred$-dimensional vector $\overline{Y}_1$ represent  the ``most dominant'' states while the remaining $n-\dimred$ entries which are collected in the $(n-\dimred)$-dimensional vector $\overline{Y}_2$ comprise  the ``less dominant'' and ``negligible'' states.  Based on this decomposition of $\overline{Y}$ into $\overline{Y}_1$ and $\overline{Y}_2$   the following partition of system  \eqref{sys_trans} into blocks is obtained
	
	\begin{align*}
		\begin{pmatrix}
			\dot{\overline{Y}}_1\\\dot{\overline{Y}}_2
		\end{pmatrix}&=\begin{pmatrix}
			\overline{\mat{A}}_{11} & \overline{\mat{A}}_{12} \\ \overline{\mat{A}}_{21} & \overline{\mat{A}}_{22} 
		\end{pmatrix} \begin{pmatrix}
			\overline{Y}_1\\ \overline{Y}_2
		\end{pmatrix}+\begin{pmatrix}
			\overline{\mat{\mat{B}}}_1\\ \overline{\mat{B}}_2
		\end{pmatrix}g(t), \qquad 
		Z=\begin{pmatrix}
			\overline{\mat{\omatrix}}_1&\overline{\mat{\omatrix}}_2
		\end{pmatrix}\begin{pmatrix}
			\overline{Y}_1\\ \overline{Y}_2
		\end{pmatrix}.
	\end{align*}

	The above block partition of the equivalent realization \eqref{sys_trans} is used to define the reduced-order system by assuming that the truncation of  $\overline{Y}$ to the first $\dimred$ entries defines the desired projection. Then 
	replacing  the ``negligible'' states $ \overline{Y}_2$ by zero and keeping only the first $\dimred$ state equations for  $\overline{Y}_1$ leads to the  system 
	\begin{equation}
		\label{sys_trunc}
		\begin{aligned}
			&\dot{\overline{Y}}_1=\overline{\mat{A}}_{11}\overline{Y}_1 +\overline{\mat{B}}_1g(t), \qquad 
			\overline{Z}=\overline{\mat{\omatrix}}_1 \overline{Y}_1.
		\end{aligned}
	\end{equation}
	
	Thus is the desired reduced-order system \eqref{sys_red} is given by the realization 	$(\widetilde{\mat{A}},\widetilde{\mat{B}},\widetilde{\mat{\omatrix}})=(\overline{\mat{A}}_{11}, \overline{\mat{B}}_1, \overline{\mat{\omatrix}}_1)$ and approximates the output $Z$ of the original system \eqref{sys_org} by $\widetilde Z = \overline Z$.
	
	\subsection{Lyapunov Balanced Truncation}
	\label{Balanced_T}
	\subsubsection{Setting}
	The crucial question for the above introduced projection-based methods for MOR is the choice of a ``suitable'' transformation matrix $\trans$ which was left out and will be addressed in this subsection. The transformation should be such that the first  entries of the transformed state $\overline{Y}$ provide  the largest contribution to the input-output behavoir of the system. They carry the essential information for approximating the system output $Z$ to a given input $g$ with sufficiently high accuracy. On the other hand the last entries should deliver the smallest contribution  to the input-output behavoir and thus can be neglected.
	
	Lyapunov balanced truncation uses ideas from control theory in particular the notion of controllability and observability which we sketch below.
	The basic idea  is to define a transformation $\trans$ that ``balances'' the state in a way that the first $\dimred$ entries of $\overline{Y}$ are the  states which are the easiest to observe  and to reach.   Then states which are simultaneously difficult to reach and to observe can be neglected and are truncated.
	That method appears to be well-suited for the present problem of approximation input-output behavior of the geothermal storage. 
	Balanced truncation MOR was first presented by Moore  \cite{moore1981principal} who exploited results of  Mullis and Roberts \cite{mullis1976synthesis}. The preservation of stability was addressed by Pernebo and Silverman \cite{pernebo1982model}, error bounds derived by Enns \cite{enns1984model} and Glover \cite{glover1984all}
	
	In the following we will always assume that the linear system \eqref{sys_org} is stable, i.e., the system matrix $\mat{A}$ is stable. From  Theorem \ref{stable_m} it is known that this is the case in the  problem under consideration. This allows that the system dynamics is not only considered on  finite time intervals  $[0,T]$ but also on  $[0,\infty)$.

	Given the initial state $Y(0)=y_0$ and the control $g$, there exists a unique solution to the continuous-time dynamical system \eqref{sys_org} given by 
	\begin{align}
		Y(t) =\solfun(t;y_0,u)&:=e^{\mat{A}t}y_0+\int_{0}^{t} e^{\mat{A}(t-s)} \mat{B} g(s)ds, \label{solution} \\
		Z(t) =\mat{\omatrix}Y(t)&=\mat{\omatrix}e^{\mat{A}t}y_0 +\int_{0}^{t} \mat{\omatrix} e^{\mat{A}(t-s)} \mat{B} g(s)ds. \nonumber
	\end{align}
	Note that the first term in the above state equation is $\solfun(t;y_0,0)$ representing  the response  to an initial condition $y_0$ and to a zero input while the second term is  $\solfun(t;0,g)$ and represents  the response to a zero initial state and input $g$.

	\subsubsection{Controllability and Observability} 
	We now introduce some concepts from linear system theory which will play a crucial role in the derivation of the balanced truncation method.  Let us denote by  $\Ltwo(0,t)$ the space of square integrable functions on $[0,t]$  with the induced norm $\|f\|_{\Ltwo(0,t)}=\big(\int_0^t \|f(s)\|_2^2\,ds\big)^{1/2}$. We write $\Ltwo(0,\infty)$ for functions on $[0,\infty)$.
	
	\begin{definition}[Controllability]
		\label{def:contr}
		\begin{enumerate}
			\item 
			Let the linear system \eqref{sys_org} be given. A state  $y\in\R^n$ is said to be \textem{controllable}  from zero initial state $y_0=0$ if there exist a finite time $t^*$ and an input  $g\in  \mathcal{L}_2(0,t^*)$ such that the solution given in \eqref{solution} satisfies $Y(t^*)=\solfun(t^*,0,g)=y$.
			\item
			The \textem{controllable} or reachable \textem{subspace} $\spacecontr$ is the set of states that can be obtained from zero initial state and a given input $g\in  \mathcal{L}_2(0,t^*)$.
			\item
			The linear system  \eqref{sys_org} is said to be (completely) controllable if $\spacecontr=\R^n$.		
		\end{enumerate}
	\end{definition}
	We note that for LTI systems the controllable subspace $\spacecontr$ is invariant w.r.t.~the choice of the initial state $y_0$. This allows the above  restriction  to  controllability from  zero initial state.
	
	\begin{definition}[Observability]
		\label{def:obser}
		\begin{enumerate}
			\item 
			Let the linear system \eqref{sys_org} be given and let $g=0$. A state  $y\in\R^n$ is said to be \textem{unobservable}  if the output $Z(t)=\mat{C}\solfun(t,y,0)=0 $ for all $t\ge0$, i.e., for all $t\ge 0$ the output $Z(t)$ of the system to initial state $Y(0)=y$ is indistinguishable from the output to zero initial state. Otherwise the state $y$ is called \textem{observable}.
			\item
			The \textem{observable  subspace} $\spaceobs$ is the set all observable states.
			\item
			System \eqref{sys_org} is said to be (completely) observable if $y=0$ is the only unobservable state.
		\end{enumerate}
	\end{definition}
	\begin{remark}	
		Let $Y_1,Y_2$ and  $Z_1,Z_2$ denote state and output of system \eqref{sys_org} to  initial states $y_1,y_2$ and the same input $g$.  If system  \eqref{sys_org} is observable then the equality of outputs $Z_1(t)=Z_2(t)$ for all $t\ge 0$ implies that $y_1=y_2$. Otherwise it holds  $y_1\neq y_2$. This can easily be seen from the consideration for $Y=Y_1-Y_2$ and $Z=Z_1-Z_2$ which are the state and the output to initial state $y=0$ and input $g=0$.
	\end{remark}	
	The input-output behavior of system \eqref{sys_org} can be 
	quantified by the following measures of the ``degree of controllability and observability''. They are based on the (squared) $\mathcal{L}_2(0,\infty)$-norms of the input and output functions which in the literature are often called ``input and output energy''.
	\begin{definition}[Controllability and Observability Function]
		\label{def:contr_obs_function}
		\begin{enumerate}
			\item The controllability function $\cfun:\spacecontr\to[0,\infty)$ is given for $y\in \spacecontr$ by
			\begin{align}
				\label{controllab_function}
				\cfun(y)=\min_{\substack{g \in \mathcal{L}_2(0,\infty)\\Y(0)=0,Y(\infty)=y}} { \|g\|^2_{\Ltwo(0,\infty)}}
				=\min_{\substack{g \in \mathcal{L}_2(0,\infty)\\Y(0)=0,Y(\infty)=y}}  \int_{0}^{\infty} \|g(t)\|_2^2dt.
			\end{align}	
			\item The observability function $\R^n:\to[0,\infty)$ is given for $y\in \R^n$ by
			\begin{align}
				\label{observab_function}
				\ofun(y)= { \|Z\|^2_{\Ltwo(0,\infty)}}=\int_{0}^{\infty} \|Z(t)\|_2^2dt,\quad Y(0)=y, g(t)=0 \quad \text{ for all } t\ge 0.
			\end{align}	
		\end{enumerate}
	\end{definition}	
	The controllability function $\cfun(y)$ is the smallest input energy required to reach the   state $y\in\spacecontr$  from zero initial state in infinite time. In view of the definition of a controllable state $y$ given  in Def.~\ref{def:contr} the minimization is w.r.t.~both the input function and the terminal time $t^*$. For more details see Antouslas \cite[Lemma 4.29]{antoulas2005approximation}.  
	The observability function $\ofun(y)$ is obtained as the limit of the output energy on a finite time interval $[0,T]$ for $T\to \infty$. Since the output energy increases in $T$ we can consider $\ofun(y)$  as the maximum output energy produced by the system when it is released from initial state $y$ for zero input. 
	
	The above quantities allow the following interpretation. States which are difficult to reach are characterized by large values of the controllability function $\cfun(y)$. They require large input energy  to reach them. On the other hand,  for large values of the observability function $\ofun(y)$ the state $y$ is easy to observe whereas small values of $\ofun(y)$ indicate that  state $y$ is difficult to observe since it produces only small output energy. Note, that  unobservable states  don't produce output energy at all and it holds  $\ofun(y)=0$ for $y\not\in \spaceobs$.
	
	\subsubsection{Gramians}	
	Below we will see that the controllability and observability function can be expressed in terms of the following matrices, see Antouslas \cite[Sec.~4.3]{antoulas2005approximation} 
	
	\begin{definition}[Controllability and Observability Gramian]\ \\
		Consider a stable linear system \eqref{sys_org}. The matrices defined by 
		\begin{equation}
			\label{def:Gramians}
			\begin{aligned}
				\cgram & =\int_{0}^{\infty} e^{\mat{A}t}\mat{B}\mat{B}^{\top} e^{\mat{A}^{\top}t}dt, \\[1ex]
				\ogram & =\int_{0}^{\infty} e^{\mat{A}^{\top}t}\mat{\omatrix}^{\top}\mat{\omatrix} e^{\mat{A}t}dt. 
			\end{aligned}
		\end{equation}
		are called (infinite) \textbf{controllability and observability Gramians}, respectively.	
	\end{definition}
	The Gramians enjoy the following properties.
	\begin{lemma}
		\label{lem:Gramian_prop}
		The Gramians $\cgram$ and $\ogram$ are symmetric, positive semi-definite matrices. \\
		If the linear system \eqref{sys_org} is stable, controllable and observable  then the Gramians are strictly positive definite.	
	\end{lemma}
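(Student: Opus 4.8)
The plan is to read off every assertion directly from the integral representations in \eqref{def:Gramians}, treating $\cgram$ and $\ogram$ by the same argument: the statement for $\ogram$ is obtained from that for $\cgram$ by replacing the pair $(\mat{A},\mat{B})$ with $(\mat{A}^\top,\mat{\omatrix}^\top)$, which preserves stability and interchanges the roles of controllability and observability. Only two hypotheses are really used in each case — stability plus controllability for $\cgram$, stability plus observability for $\ogram$ — so in particular the hypotheses of the lemma are more than enough.

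First I would check that both integrals are well defined. By Theorem~\ref{stable_m} the matrix $\mat{A}$ is stable, hence there exist constants $M\ge 1$ and $\alpha>0$ with $\|e^{\mat{A}t}\|\le M e^{-\alpha t}$ for all $t\ge 0$; the integrands in \eqref{def:Gramians} are therefore dominated in norm by a multiple of $e^{-2\alpha t}$ and the improper integrals converge absolutely. Symmetry is then immediate from $(e^{\mat{A}t})^\top=e^{\mat{A}^\top t}$ and the symmetry of $\mat{B}\mat{B}^\top$ (respectively $\mat{\omatrix}^\top\mat{\omatrix}$) upon transposing under the integral sign. Positive semi-definiteness follows by rewriting, for arbitrary $x\in\R^n$, the quadratic forms as
\begin{align*}
  x^\top\cgram x=\int_0^\infty \|\mat{B}^\top e^{\mat{A}^\top t}x\|_2^2\,dt\ge 0,\qquad
  x^\top\ogram x=\int_0^\infty \|\mat{\omatrix}\,e^{\mat{A}t}x\|_2^2\,dt\ge 0 .
\end{align*}

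It remains to upgrade this to strict definiteness. The core step is the implication: if $x^\top\cgram x=0$, then the non-negative, continuous integrand $t\mapsto\|\mat{B}^\top e^{\mat{A}^\top t}x\|_2^2$ vanishes for all $t\ge 0$, i.e.\ $\mat{B}^\top e^{\mat{A}^\top t}x=0$ for all $t\ge 0$. Since this map is real-analytic in $t$, all of its derivatives at $t=0$ vanish, giving $x^\top\mat{A}^k\mat{B}=0$ for every $k\ge 0$; by the Cayley--Hamilton theorem it suffices that this hold for $k=0,\dots,n-1$, so $x$ is orthogonal to the range of the controllability matrix $[\mat{B},\mat{A}\mat{B},\dots,\mat{A}^{n-1}\mat{B}]$, which is the controllable subspace $\spacecontr$ of Def.~\ref{def:contr}. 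Controllability, $\spacecontr=\R^n$, then forces $x=0$, and combined with the positive semi-definiteness already established this yields that $\cgram$ is strictly positive definite. Dually, $x^\top\ogram x=0$ forces $\mat{\omatrix}\,e^{\mat{A}t}x=0$ for all $t\ge 0$, which by \eqref{solution} says precisely that the state $x$ is unobservable in the sense of Def.~\ref{def:obser}; observability then gives $x=0$, so $\ogram$ is strictly positive definite.

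The routine ingredients — the exponential decay estimate, the transposition, and the expansion of the quadratic forms — are entirely standard. The only point needing a little care, and the main (if modest) obstacle, is the passage from ``an $\Ltwo(0,\infty)$-integral of a non-negative continuous function is zero'' to ``the function vanishes identically'', together with the analyticity argument that converts this pointwise vanishing into the finite-dimensional rank condition on the controllability matrix; this is exactly the bridge between the integral Gramians in \eqref{def:Gramians} and the reachability and observability notions introduced in Definitions~\ref{def:contr}--\ref{def:obser}.
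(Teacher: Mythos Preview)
Your argument is correct and actually more complete than the paper's own proof, which disposes of symmetry and positive semi-definiteness in one sentence (``follow directly from the definition'') and for strict positive definiteness simply cites Theorems~2.2 and~3.2 of Davis et al.~\cite{davis2009controllability}. Your self-contained treatment---writing out the quadratic forms as $\Ltwo$-norms and then, for definiteness, using continuity/analyticity of $t\mapsto \mat{B}^\top e^{\mat{A}^\top t}x$ to reduce to the Kalman rank condition on $[\mat{B},\mat{A}\mat{B},\dots,\mat{A}^{n-1}\mat{B}]$---is the standard route and supplies exactly what the cited reference would; the dual argument for $\ogram$ via Def.~\ref{def:obser} is equally clean. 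The only thing the paper's approach buys is brevity; yours buys independence from an external reference and makes explicit the link between the Gramian kernels and the controllable/observable subspaces that the later balancing discussion relies on.
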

	\begin{proof}
		The first two properties follow directly from the definition of the Gramians. 
		The third property is proven in Theorems 2.2 and 3.2 of Davis et al.~\cite{davis2009controllability}.
	\end{proof}
	
	\begin{theorem}
		\label{theo:gram_fun}	
		Let the linear system \eqref{sys_org} be a stable, {controllable and observable}.  Then the controllability and observability function defined in \eqref{controllab_function} and \eqref{observab_function} are given by 
		\begin{equation}
			\label{gram_fun}
			\begin{aligned}
				\cfun(y) &= y^{\top} \cgram^{-1} y  && \text{for } y\in\spacecontr,\\[1ex]
				\ofun(y) &= y^{\top} \ogram \,y && \text{for } y\in\R^n.
			\end{aligned}
		\end{equation}
	\end{theorem}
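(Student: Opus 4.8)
The plan is to treat the two identities in \eqref{gram_fun} separately. The formula for $\ofun$ is a direct computation: with $g\equiv 0$ and $Y(0)=y$, the representation \eqref{solution} gives $Y(t)=e^{\mat{A}t}y$ and $Z(t)=\mat{\omatrix}e^{\mat{A}t}y$, so substituting into \eqref{observab_function} and pulling the constant vector $y$ out of the integral yields
\begin{align*}
\ofun(y)=\int_{0}^{\infty}\|\mat{\omatrix}e^{\mat{A}t}y\|_{2}^{2}\,dt
= y^{\top}\Big(\int_{0}^{\infty} e^{\mat{A}^{\top}t}\mat{\omatrix}^{\top}\mat{\omatrix}\,e^{\mat{A}t}\,dt\Big)y
= y^{\top}\ogram\,y,
\end{align*}
the integral converging because $\mat{A}$ is stable (Theorem \ref{stable_m}) and hence $\|e^{\mat{A}t}\|$ decays exponentially. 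Nothing further is needed here.

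For $\cfun$ I would first recast the minimization in \eqref{controllab_function} as a minimum-norm problem with a linear endpoint constraint: after reversing the time variable in the control (which leaves the $\Ltwo(0,\infty)$-norm unchanged), \eqref{controllab_function} equals the minimum of $\|g\|_{\Ltwo(0,\infty)}^{2}$ over all $g$ with $\mathcal{R}g=y$, where
\[
\mathcal{R}\colon \Ltwo(0,\infty;\R^{m})\to\R^{n},\qquad
\mathcal{R}g=\int_{0}^{\infty} e^{\mat{A}s}\mat{B}\,g(s)\,ds
\]
is a bounded linear operator (boundedness again from stability, via Cauchy--Schwarz). A short computation gives its adjoint, $(\mathcal{R}^{*}z)(s)=\mat{B}^{\top}e^{\mat{A}^{\top}s}z$, and therefore $\mathcal{R}\mathcal{R}^{*}=\cgram$; by Lemma \ref{lem:Gramian_prop}, stability and controllability make $\cgram$ invertible (and $\spacecontr=\R^{n}$).

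The minimization is then an explicit instance of the projection theorem, which I would spell out. Set $g^{*}:=\mathcal{R}^{*}\cgram^{-1}y$, i.e.\ $g^{*}(s)=\mat{B}^{\top}e^{\mat{A}^{\top}s}\cgram^{-1}y$; then $\mathcal{R}g^{*}=\cgram\cgram^{-1}y=y$, so $g^{*}$ is admissible. For any other admissible $g$ one has $\mathcal{R}(g-g^{*})=0$, hence
\[
\langle g-g^{*},\,g^{*}\rangle_{\Ltwo}
=\langle g-g^{*},\,\mathcal{R}^{*}\cgram^{-1}y\rangle_{\Ltwo}
=\langle \mathcal{R}(g-g^{*}),\,\cgram^{-1}y\rangle_{\R^{n}}=0,
\]
so $\|g\|_{\Ltwo}^{2}=\|g^{*}\|_{\Ltwo}^{2}+\|g-g^{*}\|_{\Ltwo}^{2}\ge\|g^{*}\|_{\Ltwo}^{2}$, with equality precisely for $g=g^{*}$. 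Finally $\|g^{*}\|_{\Ltwo}^{2}=\langle\mathcal{R}^{*}\cgram^{-1}y,\mathcal{R}^{*}\cgram^{-1}y\rangle_{\Ltwo}=\langle\cgram\cgram^{-1}y,\cgram^{-1}y\rangle_{\R^{n}}=y^{\top}\cgram^{-1}y$, which is the asserted value of $\cfun(y)$; since $\spacecontr=\R^{n}$ the identity holds for all $y$.

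The step I expect to be the genuine obstacle is the reformulation of \eqref{controllab_function}: making precise the passage from the condition ``$Y(0)=0$, $Y(\infty)=y$ with $g\in\Ltwo(0,\infty)$'' to the endpoint constraint $\mathcal{R}g=y$, and establishing that the infimum is attained. The time-reversal argument above together with the orthogonal decomposition delivers attainment for free. An alternative is to argue via the finite-horizon reachability Gramian $P(T)=\int_{0}^{T}e^{\mat{A}s}\mat{B}\mat{B}^{\top}e^{\mat{A}^{\top}s}\,ds$, use the classical formula $y^{\top}P(T)^{-1}y$ for the least input energy steering $0$ to $y$ in time $T$, and let $T\to\infty$ using $P(T)\uparrow\cgram$ and $\cgram-P(T)=e^{\mat{A}T}\cgram\,e^{\mat{A}^{\top}T}\to 0$ — at the cost that the infimum over $T$ is then not attained at any finite horizon, cf.\ Antoulas \cite[Lemma 4.29]{antoulas2005approximation}.
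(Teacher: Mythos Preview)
Your treatment of $\ofun$ is identical to the paper's: both compute $Z(t)=\mat{\omatrix}e^{\mat{A}t}y$ from \eqref{solution} with $g\equiv 0$, substitute into \eqref{observab_function}, and recognize the integral as $y^{\top}\ogram\,y$.

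For $\cfun$, the paper does not actually prove anything: it simply writes ``The proof of $\cfun(y)=y^{\top}\cgram^{-1}y$ can be derived from the results established in \cite{stykel2002analysis}.'' Your argument via the reachability operator $\mathcal{R}$, its adjoint $\mathcal{R}^{*}$, the identification $\mathcal{R}\mathcal{R}^{*}=\cgram$, and the orthogonal-projection computation is correct and self-contained, so it goes well beyond what the paper offers. The explicit minimizer $g^{*}=\mathcal{R}^{*}\cgram^{-1}y$ and the Pythagorean identity $\|g\|^{2}=\|g^{*}\|^{2}+\|g-g^{*}\|^{2}$ are the standard and cleanest way to see both the value and the attainment. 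You are also right to flag the reformulation of the constraint ``$Y(0)=0$, $Y(\infty)=y$'' as the only genuinely delicate point; the paper's own definition is somewhat informal here (it remarks after Definition~\ref{def:contr_obs_function} that the minimization is over both the input and the terminal time, pointing to \cite[Lemma 4.29]{antoulas2005approximation}), and your finite-horizon alternative with $P(T)\uparrow\cgram$ is exactly how one makes this precise. In short: your $\cfun$ argument is a legitimate proof where the paper gives only a citation.
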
 
	The proof of the above theorem is given in Appendix \ref{proof:theo:gram_fun}. The above relations show that states $y$ in the span of the eigenvectors corresponding to small (large) eigenvalues of $\cgram$ lead to large (small) values of $\cfun(y)$. Thus such states require a high (small) input energy and are difficult (easy) to reach. On the other hand,  states $y$ in the span of the eigenvectors corresponding to small eigenvalues of $\ogram$ produce only small output energy $\ofun(y)$ and are difficult to observe. 			
	
	The Gramians are therefore efficient tools to quantify the degree of controllabilty and observabilty of a given state.  
	Further the eigenvectors of $\cgram$ and $\ogram$ span the controllable and observable subspaces, respectively. 
	
	\subsubsection{Computation of Gramians}
	The computation of the Gramians according to the Def.~\ref{def:Gramians} is quite cumbersome. A computationally feasible method  is based on the following theorem stating that the Gramians satisfy some linear matrix equations. 
	\begin{theorem}
		\label{lyapu_theo}	
		Let the linear system \eqref{sys_org} be a stable.  Then
		the controllability Gramian $\cgram$ and observability Gramian $\ogram$ satisfy   the algebraic Lyapunov equations	 
		\begin{equation}
			\label{theo:Gram_Lyapunov}
			\begin{aligned}
				\mat{A}\cgram+\cgram\mat{A}^{\top}& =-\mat{B}\mat{B}^{\top}, \\[0.5ex] 
				\ogram\mat{A}+\mat{A}^{\top}\ogram& =-\mat{\omatrix}^{\top}\mat{\omatrix}. 
			\end{aligned}
		\end{equation}
	\end{theorem}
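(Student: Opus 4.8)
The plan is to verify the two algebraic Lyapunov equations by direct differentiation under the integral sign, using the defining integral representations of the Gramians in \eqref{def:Gramians} together with the stability of $\mat{A}$ from Theorem \ref{stable_m}. First I would recall that since $\mat{A}$ is stable, there are constants $M\ge 1$ and $\omega>0$ with $\|e^{\mat{A}t}\|\le M e^{-\omega t}$ for all $t\ge 0$; this guarantees that the improper integrals defining $\cgram$ and $\ogram$ converge absolutely and that the integrand $e^{\mat{A}t}\mat{B}\mat{B}^{\!\top}e^{\mat{A}^{\!\top}t}$ (and its counterpart for $\ogram$) decays exponentially, so all the manipulations below are justified.

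For the controllability Gramian, consider the matrix-valued function $t\mapsto e^{\mat{A}t}\mat{B}\mat{B}^{\!\top}e^{\mat{A}^{\!\top}t}$ and compute its derivative using the product rule and $\frac{d}{dt}e^{\mat{A}t}=\mat{A}e^{\mat{A}t}=e^{\mat{A}t}\mat{A}$:
\begin{equation*}
\frac{d}{dt}\Big(e^{\mat{A}t}\mat{B}\mat{B}^{\!\top}e^{\mat{A}^{\!\top}t}\Big)
=\mat{A}\,e^{\mat{A}t}\mat{B}\mat{B}^{\!\top}e^{\mat{A}^{\!\top}t}
+e^{\mat{A}t}\mat{B}\mat{B}^{\!\top}e^{\mat{A}^{\!\top}t}\,\mat{A}^{\!\top}.
\end{equation*}
Integrating both sides over $[0,\infty)$, the left side telescopes to $\big[e^{\mat{A}t}\mat{B}\mat{B}^{\!\top}e^{\mat{A}^{\!\top}t}\big]_0^\infty$, which equals $\mat{0}-\mat{B}\mat{B}^{\!\top}=-\mat{B}\mat{B}^{\!\top}$ because the exponential bound forces the boundary term at $t=\infty$ to vanish. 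The right side, after pulling the constant matrices $\mat{A}$ and $\mat{A}^{\!\top}$ outside the integral, becomes exactly $\mat{A}\cgram+\cgram\mat{A}^{\!\top}$. This yields the first identity in \eqref{theo:Gram_Lyapunov}.

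The observability Gramian is handled by the entirely analogous computation applied to $t\mapsto e^{\mat{A}^{\!\top}t}\mat{\omatrix}^{\!\top}\mat{\omatrix}\,e^{\mat{A}t}$, whose derivative is $\mat{A}^{\!\top}e^{\mat{A}^{\!\top}t}\mat{\omatrix}^{\!\top}\mat{\omatrix}\,e^{\mat{A}t}+e^{\mat{A}^{\!\top}t}\mat{\omatrix}^{\!\top}\mat{\omatrix}\,e^{\mat{A}t}\mat{A}$; integrating over $[0,\infty)$ and using the boundary term $-\mat{\omatrix}^{\!\top}\mat{\omatrix}$ gives $\mat{A}^{\!\top}\ogram+\ogram\mat{A}=-\mat{\omatrix}^{\!\top}\mat{\omatrix}$, which is the second identity. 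There is no real obstacle here beyond bookkeeping; the one point that genuinely needs the hypothesis is the vanishing of the boundary term at infinity, which is precisely where stability of $\mat{A}$ (Theorem \ref{stable_m}) enters, and I would make that explicit rather than leave it implicit. (If one also wanted uniqueness of the solution to each Lyapunov equation, that follows from stability as well, via the standard fact that $\mat{A}X+X\mat{A}^{\!\top}=\mat{0}$ has only the trivial solution when $\mat{A}$ is stable; but the statement as given only asserts that the Gramians satisfy the equations, so this is optional.)
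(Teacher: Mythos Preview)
Your proposal is correct and follows essentially the same route as the paper's proof: differentiate the integrand $e^{\mat{A}t}\mat{B}\mat{B}^{\!\top}e^{\mat{A}^{\!\top}t}$ (resp.\ $e^{\mat{A}^{\!\top}t}\mat{\omatrix}^{\!\top}\mat{\omatrix}\,e^{\mat{A}t}$), integrate over $[0,\infty)$, and use stability to kill the boundary term at infinity. If anything, your write-up is slightly more careful than the paper's in making the exponential bound and the justification of the vanishing boundary term explicit.
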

	The proof can be found in  Antouslas \cite[Proposition 4.27]{antoulas2005approximation} 
	and is also provided in Appendix~\ref{append_c}.

	\begin{remark}
		For solving  the above  Lyapunov matrix equation usually numerical methods have to b applied.
		Such methods  hav been addressed in	a large body of literature. For a low-dimensional and dense
		matrix $\mat{A}$, the Lyapunov equations can be solved using Hammarling method \cite{hammarling1982numerical,hodel1992parallel}, for medium- to
		large-scale Lyapunov equations, a sign function method \cite{benner1999solving,byers1997matrix} can be used, in the case of large and sparse matrix $\mat{A}$, projection-type methods such as $\mathcal{H}$-matrices based methods \cite{hackbusch1999sparse}, Krylov subspace methods \cite{jaimoukha1994krylov,simoncini2007new} and alternating direction implicit (ADI) method \cite{benner2008numerical,benner2014self,penzl1999cyclic} are more appropriate for solving Lyapunov equations.\\
	\end{remark}

	\subsubsection{Balancing}
	We now come back to the construction of a ``suitable'' transformation matrix $\trans$ which should be such that 
	the first  entries of the transformed state $\overline{Y}=\trans Y$ provide  the largest contribution to the input-output behavoir of the system. The idea of Lyapunov balanced truncation  is to use a transformation $\trans$ that ``balances'' the state such that  the first  entries of $\overline{Y}$ are simultaneously easy to reach and  easy to observe.   This allows to truncate the remaining states  which are difficult to reach and difficult  to observe.
	
	We recall the interpretation of the Gramians after Theorem \ref{theo:gram_fun}.	States that are easy to reach, i.e., those that require a small amount of input energy to reach, are found in the span of the eigenvectors of the controllability Gramian $\cgram$ corresponding to large eigenvalues. Further, states that are easy to observe, i.e., those that produce large amounts of output energy, lie in the span of eigenvectors of the observability Gramian $\ogram$ corresponding to large eigenvalues as well. However, in an arbitrary coordinate system a state $y$ that is easy to reach might be difficult to reach. On the other hand, there might exist a different state $y^\prime$ that is difficult to reach but easy to observe. Then it is hard to decide which of the two states $y$ and $y^\prime$ is more important for the input-output behavior of the system.
	
	This observation suggests to transform the coordinate system of the state space using a transformation matrix  $\trans$ in which states that are easy to reach are simultaneously easy to observe and vice versa. This is the case if the transformed Gramians coincide. Below we give a transformation for which the two Gramians are even diagonal.
	
	We recall Lemma \ref{transform_lemma}	which describes  the transformation of the realization $(\mat{A,B,C})$ of system \eqref{sys_org} to the realization  $(\overline{\mat{A}},\overline{\mat{B}},\overline{\mat{\omatrix}})$ of the transformed system \eqref{sys_trans}. The following lemma describes the  transformation of the corresponding Gramians.
	\begin{lemma}~
		\label{transform_Gram}			
		Let $\trans$ be a $n \times n$  non-singular transformation matrix defining the transformed system \eqref{sys_trans} for the   transformed state $\overline{Y}=\trans Y$. \\
		For the transformed Gramians $\cgramtr$ and $\ogramtr$ of that system it holds
		$$\overline{\cgram}=\trans \cgram \trans ^{{\top}}, \quad  \overline{\ogram}=\trans^{-{\top}} \ogram \trans^{-1} \quad \text{and} \quad \cgramtr\ogramtr=\trans \cgram\ogram \trans^{-1}.$$
	\end{lemma}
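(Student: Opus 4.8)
The plan is to insert the transformed realization $(\overline{\mat{A}},\overline{\mat{B}},\overline{\mat{\omatrix}}) = (\trans\mat{A}\trans^{-1},\,\trans\mat{B},\,\mat{\omatrix}\trans^{-1})$ supplied by Lemma~\ref{transform_lemma} into the integral definitions \eqref{def:Gramians} of the Gramians, now applied to the transformed system \eqref{sys_trans}, and then to simplify. The only ingredient that needs a moment's thought is the behaviour of the matrix exponential under a similarity transformation: since $(\trans\mat{A}\trans^{-1})^{k} = \trans\mat{A}^{k}\trans^{-1}$ for every $k\in\N$, the defining power series telescopes and yields $e^{\overline{\mat{A}}t} = \trans\,e^{\mat{A}t}\,\trans^{-1}$, and consequently $e^{\overline{\mat{A}}^{\top}t} = \big(e^{\overline{\mat{A}}t}\big)^{\top} = \trans^{-\top}\,e^{\mat{A}^{\top}t}\,\trans^{\top}$. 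I would also note in passing that $\overline{\mat{A}}$ is similar to $\mat{A}$ and hence shares its spectrum, so by Theorem~\ref{stable_m} it is stable and the integrals defining $\cgramtr$ and $\ogramtr$ converge.

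For the controllability Gramian I would then compute
\begin{equation*}
\cgramtr=\int_{0}^{\infty} e^{\overline{\mat{A}}t}\,\overline{\mat{B}}\,\overline{\mat{B}}^{\top}\,e^{\overline{\mat{A}}^{\top}t}\,dt
=\int_{0}^{\infty}\trans\,e^{\mat{A}t}\,\trans^{-1}\,\trans\mat{B}\mat{B}^{\top}\trans^{\top}\,\trans^{-\top}\,e^{\mat{A}^{\top}t}\,\trans^{\top}\,dt,
\end{equation*}
and cancel the interior factors $\trans^{-1}\trans=I$ and $\trans^{\top}\trans^{-\top}=I$; pulling the surviving $\trans$ on the left and $\trans^{\top}$ on the right out of the integral gives $\cgramtr=\trans\,\cgram\,\trans^{\top}$. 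The observability case is symmetric: substituting $\overline{\mat{\omatrix}}^{\top}\overline{\mat{\omatrix}}=\trans^{-\top}\mat{\omatrix}^{\top}\mat{\omatrix}\trans^{-1}$ together with the two exponential identities into the definition of $\ogramtr$, the factors $\trans^{\top}\trans^{-\top}$ and $\trans^{-1}\trans$ collapse to the identity and one is left with $\ogramtr=\trans^{-\top}\,\ogram\,\trans^{-1}$.

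Finally, multiplying the two identities just obtained gives $\cgramtr\ogramtr=\trans\,\cgram\,\trans^{\top}\,\trans^{-\top}\,\ogram\,\trans^{-1}=\trans\,\cgram\ogram\,\trans^{-1}$, the claimed similarity relation; in particular $\cgram\ogram$ and $\cgramtr\ogramtr$ then have the same eigenvalues, which is exactly the invariance one needs before defining the Hankel singular values. I do not expect a genuine obstacle here: the argument is essentially bookkeeping, and the only place where care is required is the exponential-of-a-similarity identity and keeping straight which transpose sits on which side so that all the cancellations are legitimate.
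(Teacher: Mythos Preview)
Your argument is correct, but it proceeds along a different route from the paper's. You work directly with the integral definitions \eqref{def:Gramians} of the Gramians and the similarity identity $e^{\trans\mat{A}\trans^{-1}t}=\trans e^{\mat{A}t}\trans^{-1}$, so that the transformation factors can be pulled outside the integral after the obvious cancellations. The paper instead exploits the Lyapunov characterization from Theorem~\ref{lyapu_theo}: it writes down the Lyapunov equation satisfied by $\cgramtr$ in the transformed realization, left- and right-multiplies by $\trans^{-1}$ and $\trans^{-\top}$, and then recognizes the resulting equation as the Lyapunov equation for $\cgram$; the conclusion $\cgram=\trans^{-1}\cgramtr\trans^{-\top}$ then follows by \emph{uniqueness} of the (stable) Lyapunov solution. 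Your approach is slightly more self-contained in that it does not invoke this uniqueness fact, while the paper's approach avoids handling the matrix exponential altogether. The final product identity is obtained identically in both proofs by multiplying the two Gramian formulas and cancelling $\trans^{\top}\trans^{-\top}$.
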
			
	The proof of Lemma~\ref{transform_Gram} can be found in Appendix~\ref{append_Gram}.
	The last relation of the above lemma shows that the product $\cgramtr\ogramtr$ results from a similarity transformation of the product $\cgram\ogram$. Thus the eigenvalues of the product of the two Gramians are preserved under transformations of the coordinate system and can be considered as system invariants. They can be expressed as squares of the Hankel singular values of the system.
	\begin{definition}[Hankel Singular Values]\label{def:HankelSVD}
		The Hankel singular values are the square roots of the eigenvalues of the product of the controllability and observability Gramian
		$$\sigma_i=\sqrt{\lambda_i(\cgram\ogram)}, \quad i=1,\ldots n.$$
		Here $\lambda_i(G)$ denotes the $i$-th eigenvalue of the matrix $G$, ordered as $\lambda_1\ge \lambda_2\ge \ldots \lambda_n\ge 0$.	
	\end{definition}
	For linear systems \eqref{sys_org} which are controllable, observable and stable it is known that the Hankel singular values are strictly positive, see Antouslas \cite[Lemma 5.8]{antoulas2005approximation}.

	The next theorem presents the main result of Lyapunov balanced truncation and gives the transformation matrix $\trans$ that balances the system such that the two Gramians are equal and diagonal.
	\begin{theorem}[Balancing Transformation]\ \\	\label{theo:balancing_trans}
		Let the linear system \eqref{sys_org} be stable, controllable and observable.
		Further, let \\
		\begin{tabular}{cl}
			$\cgram=\mat{U}\mat{U}^{{\top}}$ & be the  Cholesky decomposition of the controllability Gramian \\
			&  where $\mat{U}$ is an upper triangular matrix,\\
			$\mat{U}^{{\top}} \ogram \mat{U}=\mat{K} \Sigma^2 \mat{K}^{{\top}}$ & be the eigenvalue decomposition of  ~$\mat{U}^{{\top}} \ogram \mat{U}$ such that\\ $\Sigma=\diag(\sigma_1,\ldots,\sigma_n)$&  is the diagonal matrix of Hankel singular values from Def.~\ref{def:HankelSVD}.
		\end{tabular}\\
		Then for the transformation matrix  
		\begin{align}
			\label{trans_mat1}
			\trans=\Sigma^{\frac{1}{2}}\mat{K}^{{\top}}\mat{U}^{-1}
		\end{align}the transformed system (\ref{sys_trans}) is balanced and   the controllability $\cgramtr$ and observability $\ogramtr$ Gramians  are given by 
		$$\cgramtr=\ogramtr=\Sigma$$
		i.e., they are 	diagonal and equal to a diagonal matrix containing the  Hankel singular values as diagonal entries 
		
	\end{theorem}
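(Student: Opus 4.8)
The plan is to verify directly that the transformation matrix $\trans=\Sigma^{1/2}\mat{K}^{\top}\mat{U}^{-1}$ produces the claimed diagonal Gramians by substituting into the transformation rules from Lemma \ref{transform_Gram}, namely $\cgramtr=\trans\cgram\trans^{\top}$ and $\ogramtr=\trans^{-\top}\ogram\trans^{-1}$. Before doing so I would record the elementary facts that make the construction well-posed: since the system is stable, controllable and observable, Lemma \ref{lem:Gramian_prop} guarantees that $\cgram$ is symmetric positive definite, so its Cholesky factor $\mat{U}$ is a genuine invertible upper-triangular matrix and $\trans$ is non-singular as a product of invertible matrices. Likewise $\mat{U}^{\top}\ogram\mat{U}$ is symmetric positive definite, hence admits an orthogonal eigenvector matrix $\mat{K}$ (so $\mat{K}^{\top}\mat{K}=\mat{K}\mat{K}^{\top}=\mat{I}$) with strictly positive eigenvalues, which we label $\sigma_1^2\ge\cdots\ge\sigma_n^2>0$; I should also check that these $\sigma_i$ really are the Hankel singular values of Definition \ref{def:HankelSVD}, i.e.\ that $\lambda_i(\mat{U}^{\top}\ogram\mat{U})=\lambda_i(\cgram\ogram)$. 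This last point follows because $\mat{U}^{\top}\ogram\mat{U}=\mat{U}^{-1}(\mat{U}\mat{U}^{\top})\ogram\mat{U}=\mat{U}^{-1}(\cgram\ogram)\mat{U}$ is a similarity transform of $\cgram\ogram$.

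For the controllability Gramian I would compute
$$
\cgramtr=\trans\cgram\trans^{\top}
=\Sigma^{1/2}\mat{K}^{\top}\mat{U}^{-1}\,(\mat{U}\mat{U}^{\top})\,\mat{U}^{-\top}\mat{K}\,\Sigma^{1/2}
=\Sigma^{1/2}\mat{K}^{\top}\mat{K}\,\Sigma^{1/2}
=\Sigma^{1/2}\Sigma^{1/2}=\Sigma,
$$
using $\cgram=\mat{U}\mat{U}^{\top}$ and $\mat{K}^{\top}\mat{K}=\mat{I}$. For the observability Gramian I first note that $\trans^{-1}=\mat{U}\mat{K}\Sigma^{-1/2}$ and hence $\trans^{-\top}=\Sigma^{-1/2}\mat{K}^{\top}\mat{U}^{\top}$, so
$$
\ogramtr=\trans^{-\top}\ogram\trans^{-1}
=\Sigma^{-1/2}\mat{K}^{\top}\,(\mat{U}^{\top}\ogram\mat{U})\,\mat{K}\,\Sigma^{-1/2}
=\Sigma^{-1/2}\mat{K}^{\top}(\mat{K}\Sigma^2\mat{K}^{\top})\mat{K}\Sigma^{-1/2}
=\Sigma^{-1/2}\Sigma^2\Sigma^{-1/2}=\Sigma,
$$
invoking the eigenvalue decomposition $\mat{U}^{\top}\ogram\mat{U}=\mat{K}\Sigma^2\mat{K}^{\top}$ and orthogonality of $\mat{K}$. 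This shows $\cgramtr=\ogramtr=\Sigma$, and since by Lemma \ref{transform_Gram} (or by Theorem \ref{lyapu_theo} applied to the transformed realization $(\overline{\mat{A}},\overline{\mat{B}},\overline{\mat{\omatrix}})$ from Lemma \ref{transform_lemma}) $\cgramtr$ and $\ogramtr$ are the genuine Gramians of the transformed system, the system \eqref{sys_trans} is balanced.

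There is no serious obstacle here — the result is essentially a bookkeeping exercise with Cholesky and spectral factorizations — but the step requiring the most care is keeping the transpose/inverse algebra straight, in particular getting $\trans^{-1}=\mat{U}\mat{K}\Sigma^{-1/2}$ correct (which uses $\mat{K}^{-1}=\mat{K}^{\top}$ and $(\mat{U}^{-1})^{-1}=\mat{U}$) so that the $\mat{U}^{\top}\ogram\mat{U}$ block appears cleanly in the middle of the expression for $\ogramtr$. I would also make the brief remark that the diagonal entries are correctly ordered $\sigma_1\ge\cdots\ge\sigma_n>0$ by the convention in Definition \ref{def:HankelSVD} and the strict positivity recalled after it, which is what makes the subsequent truncation of the ``least dominant'' states meaningful.
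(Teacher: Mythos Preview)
Your proof is correct and follows essentially the same route as the paper: both verify $\cgramtr=\Sigma$ and $\ogramtr=\Sigma$ by direct substitution of $\trans=\Sigma^{1/2}\mat{K}^{\top}\mat{U}^{-1}$ into the transformation formulas of Lemma~\ref{transform_Gram}, using $\cgram=\mat{U}\mat{U}^{\top}$, $\mat{U}^{\top}\ogram\mat{U}=\mat{K}\Sigma^2\mat{K}^{\top}$ and $\mat{K}^{\top}\mat{K}=\mat{I}_n$. Your write-up is in fact more complete than the paper's, since you also justify invertibility of $\mat{U}$ and $\trans$, explicitly compute $\trans^{-1}$, and verify via similarity that the eigenvalues of $\mat{U}^{\top}\ogram\mat{U}$ coincide with those of $\cgram\ogram$ so that $\Sigma$ really carries the Hankel singular values.
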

	The proof of Theorem~\ref{theo:balancing_trans}  can be found in Appendix~\ref{append_e}.

	The above approach may be numerically inefficient and ill-conditioned. The reason is that for large-scale systems  the Gramians $\cgram,\ogram$ often have a numerically low rank compared to the dimension $n$. This is due to the fast decay of the eigenvalues of the Gramians and also of the Hankel singular values. Then the computation of inverse matrices such as $U^{-1}$ should be avoided from an numerical point of view. In the literature  there are several suggestions of alternative approaches which are identical in theory but yield algorithms with different numerical properties, see  Antouslas \cite[Sec.~7.4]{antoulas2005approximation} and the references therein. One of them is given in the next theorem.
	\begin{theorem}[Square Root Algorithm, Antouslas \cite{antoulas2005approximation}, Sec.~7.4]\ \\	\label{theo:square_root_alg}
		Let the assumptions of Theorem \ref{theo:balancing_trans} be fulfilled.	Further, let \\
		\begin{tabular}{cl}
			$\cgram=\mat{U}\mat{U}^{{\top}}$ & be the  Cholesky decomposition of the controllability Gramian, \\
			$\ogram=\mat{L}\mat{L}^{{\top}}$ & be the  Cholesky decomposition of the observability Gramian, \\
			&  where $\mat{U}$ is an upper and $\mat{L}$ a lower triangular matrix,\\
			$\mat{U}^{{\top}} \mat{L}=\mat{W} \Sigma \mat{V}^{{\top}}$ & be the singular value  decomposition of  $\mat{U}^{{\top}} \mat{L}$ with the  \\ 
			&  orthogonal matrices $\mat{W}$ and $\mat{V}$.
		\end{tabular}\\
		Then the transformation matrix  $\trans$ given in \eqref{trans_mat1} and its inverse can be represented as 
		\begin{align}
			\label{trans_mat2}
			\trans=\Sigma^{-1/2}\mat{V}^{{\top}}\mat{L}^{\top} \quad \text{and} \quad \trans^{-1}=\mat{U}\mat{W}\Sigma^{-1/2}.
		\end{align}
	\end{theorem}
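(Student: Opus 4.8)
The plan is to prove the two representations in \eqref{trans_mat2} by reconciling the square-root data $(\mat{U},\mat{L},\mat{W},\Sigma,\mat{V})$ with the balancing data of Theorem~\ref{theo:balancing_trans}; the crucial observation will be that the left singular vectors $\mat{W}$ of $\mat{U}^{\top}\mat{L}$ can serve as the eigenvector matrix $\mat{K}$ appearing there. First I would record that, since \eqref{sys_org} is stable, controllable and observable, Lemma~\ref{lem:Gramian_prop} makes $\cgram$ and $\ogram$ strictly positive definite, so the Cholesky factors $\mat{U},\mat{L}$ are non-singular and the Hankel singular values on the diagonal of $\Sigma$ are strictly positive; in particular $\Sigma^{1/2}$ and $\Sigma^{-1/2}$ are well defined and invertible.

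Next I would substitute $\ogram=\mat{L}\mat{L}^{\top}$ and the singular value decomposition $\mat{U}^{\top}\mat{L}=\mat{W}\Sigma\mat{V}^{\top}$, and use $\mat{V}^{\top}\mat{V}=\mat{I}$, to compute
\[
\mat{U}^{\top}\ogram\,\mat{U}=(\mat{U}^{\top}\mat{L})(\mat{U}^{\top}\mat{L})^{\top}=\mat{W}\Sigma\mat{V}^{\top}\mat{V}\Sigma\mat{W}^{\top}=\mat{W}\Sigma^{2}\mat{W}^{\top}.
\]
Since $\mat{W}$ is orthogonal and $\Sigma^{2}$ diagonal, this is an eigenvalue decomposition of the symmetric matrix $\mat{U}^{\top}\ogram\,\mat{U}$ with the squared Hankel singular values on the diagonal, so one may take $\mat{K}=\mat{W}$ in Theorem~\ref{theo:balancing_trans} and hence $\trans=\Sigma^{1/2}\mat{K}^{\top}\mat{U}^{-1}=\Sigma^{1/2}\mat{W}^{\top}\mat{U}^{-1}$. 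It then remains to rewrite this as $\Sigma^{-1/2}\mat{V}^{\top}\mat{L}^{\top}$: transposing the SVD gives $\mat{L}^{\top}\mat{U}=\mat{V}\Sigma\mat{W}^{\top}$, whence
\[
\Sigma^{-1/2}\mat{V}^{\top}\mat{L}^{\top}\mat{U}=\Sigma^{-1/2}\mat{V}^{\top}\mat{V}\Sigma\mat{W}^{\top}=\Sigma^{1/2}\mat{W}^{\top},
\]
and multiplying on the right by $\mat{U}^{-1}$ yields the first identity in \eqref{trans_mat2}. The second one follows by inverting $\trans=\Sigma^{1/2}\mat{W}^{\top}\mat{U}^{-1}$, using that $\mat{U}$ is non-singular, $\mat{W}$ orthogonal and $\Sigma^{1/2}$ invertible, which gives $\trans^{-1}=\mat{U}\mat{W}\Sigma^{-1/2}$.

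I expect the only genuinely delicate point to be the identification $\mat{K}=\mat{W}$ in the step above. When some Hankel singular values coincide, the eigenvector matrix of $\mat{U}^{\top}\ogram\,\mat{U}$ is determined only up to block-orthogonal transformations inside the repeated eigenspaces; so strictly speaking one should either phrase the theorem for the generic case of distinct Hankel singular values, or observe that any such choice of eigenvectors still produces a balancing transformation in the sense of Theorem~\ref{theo:balancing_trans} (the balanced Gramians $\cgramtr=\ogramtr=\Sigma$ are unchanged, which can be verified via Lemma~\ref{transform_Gram}), so that fixing $\mat{K}=\mat{W}$ is legitimate. Everything else is routine manipulation with orthogonal matrices and Cholesky factors.
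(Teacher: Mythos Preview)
Your argument is correct. The key identification $\mat K=\mat W$ via
\[
\mat U^{\top}\ogram\,\mat U=(\mat U^{\top}\mat L)(\mat U^{\top}\mat L)^{\top}=\mat W\Sigma^{2}\mat W^{\top}
\]
is exactly the right link between the square-root data and the balancing data of Theorem~\ref{theo:balancing_trans}, and the remaining manipulations with the transposed SVD and the inversion of $\Sigma^{1/2}\mat W^{\top}\mat U^{-1}$ are clean. Your caveat about repeated Hankel singular values is also well placed: the phrasing ``the transformation matrix $\trans$ given in \eqref{trans_mat1}'' tacitly presumes a fixed choice of eigenvectors, and you correctly note that taking $\mat K=\mat W$ is one admissible such choice, yielding the same balanced Gramians $\Sigma$ via Lemma~\ref{transform_Gram}.

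As for comparison with the paper: there is nothing to compare against. The paper does not supply its own proof of Theorem~\ref{theo:square_root_alg}; it simply attributes the result to Antoulas~\cite[Sec.~7.4]{antoulas2005approximation} and moves on (the appendix contains proofs only of Lemma~\ref{transform_lemma}, Theorems~\ref{theo:gram_fun}, \ref{lyapu_theo}, \ref{theo:balancing_trans}, and Lemma~\ref{transform_Gram}). Your write-up therefore furnishes a self-contained proof where the paper offers only a citation, and it is entirely consistent with the surrounding framework.
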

	Note that the computation of $\trans$ according to \eqref{trans_mat2} does not require the  inversion of the  full matrix $\mat{U}$ as in \eqref{trans_mat1}.

	\subsubsection{Truncation}
	\label{subsub:truncation}
	The final step of the balanced truncation MOR is the truncation of the balanced  system as it is explained in  
	Subsec.~\ref{subsec:projection}. Then the truncated balanced system as in \eqref{sys_trunc} is the desired reduced-order system \eqref{sys_red}.  It is also balanced and 
	Antoulas \cite{antoulas2005approximation} shows in Theorem 7.9 that for $\sigma_\dimred>\sigma_{\dimred+1}$ the reduced-order system is again stable, controllable and observable.

	\subsubsection{Algorithm}
	Let the linear LTI system \eqref{sys_org} with the realization ($\mat{A,B,C}$) be stable, controllable and observable.
	Then, the balanced truncation procedure can be summarized in the following algorithm.  
	\begin{algo}[Lyapunov Balanced Truncation]~
		\label{alg_baltrunc}
		\begin{itemize}
			\item [1)] Compute  the Gramians $\cgram$ and $\ogram$ by solving the Lyapunov equations \\
			$\mat{A}\cgram+\cgram \mat{A}^{\top}=-\mat{B}\mat{B}^{\top}$  and $\ogram \mat{A}+\mat{A}^{\top}\ogram=-\mat{\omatrix}^{\top}\mat{\omatrix}$. 
			\item [2)] Compute the Cholesky decomposition of the Gramians
			$\cgram=\mat{U}\mat{U}^{{\top}} \text{ and }  \ogram=\mat{L}\mat{L}^{{\top}}$\\
			where $\mat{U}$ is an upper and $\mat{L}$ is a lower triangular matrix. 
			\item [3)] Compute the singular value decomposition  \\[0.5ex]  
			$\mat{U}^{{\top}}\mat{L}=\mat{\mat{W}}\Sigma V^{{\top}} \quad \text{ with } ~\Sigma = \diag(\sigma_1,\ldots,\sigma_n) ~\text{ and }~
			\sigma_1\ge \sigma_2\ge \ldots\ge \sigma_n>0.$
			
			\item [4)] Construct the transformation matrices \quad  $\trans=\Sigma^{-1/2}\mat{V}^{{\top}}\mat{L}^{\top}$		and $\trans^{-1}=\mat{U}\mat{W}\Sigma^{-1/2}$.	
			\item [5)]  Transforming  the state to $\overline Y=\trans Y$ leads to the balanced system defined by the matrices 		
			\begin{align*}
				(\overline {\mat{A}},\overline {\mat{B}},\overline {\mat{\omatrix}})=(\trans \mat{A} \trans^{-1}, \trans\mat{B}, \mat{\omatrix} \trans^{-1})=\left(\begin{pmatrix}
					\overline {\mat{A}}_{11} & \overline {\mat{A}}_{12} \\ \overline {\mat{A}}_{21} & \overline {\mat{A}}_{22} 
				\end{pmatrix},\begin{pmatrix}
					\overline {\mat{B}}_1\\ \overline {\mat{B}}_2
				\end{pmatrix}, \begin{pmatrix}
					\overline {\mat{\omatrix}}_1&\overline {\mat{\omatrix}}_2
				\end{pmatrix}\right).
			\end{align*} 
			\item [6)] Truncate the transformed state $\overline Y=\trans Y$ keeping the first $\dimred$ entries and choose 
			\begin{align*}
				(\widetilde{\mat{A}}, \widetilde{\mat{B}},\widetilde{\mat{\omatrix}})=(\overline {\mat{A}}_{11},\overline {\mat{A}}_1,\overline {\mat{\omatrix}}_1), \quad \widetilde{\mat{A}} \in \R^{\dimred \times \dimred}, ~~\widetilde{\mat{B}} \in \R^{\dimred \times m}, ~~\widetilde{\mat{\omatrix}} \in \R^{n_o  \times \dimred}.
			\end{align*}	
		\end{itemize}
	\end{algo}

	\begin{remark}~
		\label{rem:algo}
		\begin{enumerate}
			\item From an implementation point of view in the above algorithm the truncation in step 6  can be moved to step 4 which allows the construction of the reduced-order system without forming  the high-dimensional balanced system in step 5. 		
			This leads to  the following modification of Algorithm \ref{alg_baltrunc}.
			
			\begin{itemize}
				\item [$4^\ast)$] 
				Define  the diagonal matrix  $\mat{\Sigma}_\dimred =  \diag(\sigma_1,\ldots,\sigma_\dimred)$ formed by the $\dimred$ largest Hankel singular values and  the $  n\times \dimred$ matrices $\mat{W}_\dimred, \mat{V}_\dimred$ formed by the first $\dimred$ columns of $\mat{W}, \mat{V}$, respectively.\\
				Construct  new transformation matrices with 
				\begin{tabular}[t]{l}
					the $  \dimred\times n$   matrix  $\trans_\dimred^+= \mat{\Sigma}_\dimred^{-1/2}\mat{V}_\dimred^{{\top}}\mat{L}^{\top}$ and \\
					the $ n\times \dimred$ matrix $\trans_\dimred^-=\mat{U}\mat{W}_\dimred\mat{\Sigma}_\dimred^{-1/2}$. 
				\end{tabular}\\
				Note that $\trans^+\trans^-=\mat{I}_\dimred$ is the identity matrix of dimension $\dimred$.			
				\item [$5^\ast)$] omitted 
				\item [$6^\ast)$] 	The reduced-order system is obtained  directly by 
				\begin{align*}
					(\widetilde{\mat{A}}, \widetilde{\mat{B}},\widetilde{\mat{\omatrix}})=(\trans^+ \mat{A} \trans^{-}, \trans^+\mat{B}, \mat{\omatrix} \trans^{-}) .
				\end{align*}
			\end{itemize}
			That modification is also interesting from an computational point of view 
			since it requires not the full but only a partial singular value decomposition of $\mat{U}^{{\top}}\mat{L}$ in step 3.

			\item
			The above modification is also useful for linear systems which are not (fully) observable or controllable as it is often the case if system \eqref{sys_org} results from the semi-discretization of PDEs. We also observe this for the system derived in above in \eqref{Matrix_form1}. Then the Gramians $\cgram,\ogram$ and also the product $\cgram\ogram$ might be singular and there are zero Hankel singular values such that we have $\sigma_1\ge \ldots \ge \sigma_{n_0}>0=\sigma_{n_0+1}=\ldots = \sigma_n$ for some $n_0<n$.
			In that case  the transformation matrix $\trans$ and its inverse $\trans^{-1}$ are not defined and the above algorithm breaks down. However  $\trans^+$ and $\trans^-$ are well-defined for any $\dimred \le n_0$ and  formally  the above described modification works well.
			
			A theoretical justification of that approach is  given in Tombs and Postlethwaite \cite[Thm.~2.2]{tombs1987truncated}. The authors show that for $\dimred=n_0$ the reduced-order system is stable, fully controllable and observable and has the same transfer function as the original system, i.e.~there is no change of the input-output behavior. This allows to fit  a stable but not necessarily fully observable and controllable system into the above framework from the beginning of this section. In  a preprocessing step  balanced truncation is formally applied to obtain a reduced-order system of dimension $\dimred=n_0$. The latter system is stable, controllable and observable and can be further reduced applying the standard methods to obtain an approximation of the input-output behavior.
		\end{enumerate}

	\end{remark}
	
	\subsubsection{Error Bounds}
	
	One of the advantages of Lyapunov  balanced truncation is that there exist error estimates which can be given in terms of the discarded Hankel singular values. They  allow to select the dimension $\ell$ of the reduced-order system such that a prescribed accuracy of the output approximation is guaranteed. In the literature these error estimates are given for the transfer functions of the original and the reduced-order system from which one can derive the  estimates given below for the error measured in the $\Ltwo(0,t)$-norm. 	The following theorem is proven in  Enns \cite{enns1984model} and Glover \cite{glover1984all}.
	\begin{theorem}
		\label{theo_errorbound}
		Let the linear system \eqref{sys_org} be a stable, controllable and observable with zero initial value, i.e.~$Y(0)=0$.
		Further, let Hankel singular values be pairwise distinct with $\sigma_1> \ldots > \sigma_{n}>0$. Then it holds for all $t\ge 0$
		\begin{align}
			\label{error_bound}
			\|Z-\widetilde{Z}\|_{\Ltwo(0,t)} \leq  2\sum_{i=\ell+1}^{n}\sigma_i \; \|g\|_{\Ltwo(0,t)}.
		\end{align}
		
	\end{theorem}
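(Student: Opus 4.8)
The plan is to reduce the time-domain statement to a frequency-domain one, where it becomes the classical $\mathcal{H}_\infty$ error bound of Enns and Glover, and then to transfer the resulting estimate back to the finite-horizon $\Ltwo(0,t)$ setting by a causality argument. Since $Y(0)=0$, taking Laplace transforms in \eqref{sys_org} and \eqref{sys_red} gives $\widehat{Z}(s)=H(s)\widehat{g}(s)$ and $\widehat{\widetilde Z}(s)=\widetilde H(s)\widehat{g}(s)$ with transfer functions $H(s)=\mat{\omatrix}(s\mat{I}-\mat{A})^{-1}\mat{B}$ and $\widetilde H(s)=\widetilde{\mat{\omatrix}}(s\mat{I}-\widetilde{\mat{A}})^{-1}\widetilde{\mat{B}}$; hence the error $E:=Z-\widetilde Z$ is the output of the \emph{error system} with realization $\mat{A}_e=\diag(\mat{A},\widetilde{\mat{A}})$, $\mat{B}_e=(\mat{B}^{\top},\widetilde{\mat{B}}^{\top})^{\top}$, $\mat{\omatrix}_e=(\mat{\omatrix},-\widetilde{\mat{\omatrix}})$, whose transfer function is $H-\widetilde H$. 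Both $\mat{A}$ and, by the preservation-of-stability property of balanced truncation recalled in Subsec.~\ref{subsub:truncation} (which applies since $\sigma_{\dimred}>\sigma_{\dimred+1}$), the matrix $\widetilde{\mat{A}}$ are stable, so $\mat{A}_e$ is stable and $H-\widetilde H\in\mathcal{H}_\infty$. The heart of the proof is then the estimate
\[
\|H-\widetilde H\|_{\mathcal{H}_\infty}:=\sup_{\omega\in\R}\big\|H(i\omega)-\widetilde H(i\omega)\big\|_2\ \le\ 2\sum_{i=\dimred+1}^{n}\sigma_i ,
\]
from which the infinite-horizon bound follows because the $\Ltwo(0,\infty)\to\Ltwo(0,\infty)$ induced norm of a stable causal LTI convolution operator equals the $\mathcal{H}_\infty$-norm of its transfer function (Plancherel's theorem together with the maximum modulus principle), so that $\|E\|_{\Ltwo(0,\infty)}\le\|H-\widetilde H\|_{\mathcal{H}_\infty}\,\|g\|_{\Ltwo(0,\infty)}$.

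For the finite-horizon version I would invoke causality. Fix $t\ge 0$ and replace $g$ by $g_t:=g\,\1_{[0,t]}$. Since the error system is causal, $E$ and the error $E_t$ produced by $g_t$ coincide on $[0,t]$, so
\[
\|E\|_{\Ltwo(0,t)}=\|E_t\|_{\Ltwo(0,t)}\le\|E_t\|_{\Ltwo(0,\infty)}\le\|H-\widetilde H\|_{\mathcal{H}_\infty}\,\|g_t\|_{\Ltwo(0,\infty)}=\|H-\widetilde H\|_{\mathcal{H}_\infty}\,\|g\|_{\Ltwo(0,t)},
\]
which, combined with the displayed bound on $\|H-\widetilde H\|_{\mathcal{H}_\infty}$, is exactly \eqref{error_bound}.

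It remains to establish the $\mathcal{H}_\infty$ estimate, and I would do so in two stages. First, the \textbf{one-step bound}: if $(\mat{A},\mat{B},\mat{\omatrix})$ is a stable, minimal, \emph{balanced} realization of order $k$ with distinct Hankel singular values $\sigma_1>\dots>\sigma_k$, and $(\widetilde{\mat{A}},\widetilde{\mat{B}},\widetilde{\mat{\omatrix}})=(\mat{A}_{11},\mat{B}_1,\mat{\omatrix}_1)$ is obtained by truncating only the last state (the one carrying $\sigma_k$), then $\|H_k-H_{k-1}\|_{\mathcal{H}_\infty}\le 2\sigma_k$. To prove this, partition the common Gramian $\cgram=\ogram=\Sigma=\diag(\Sigma_1,\sigma_k)$ and write out the two Lyapunov equations \eqref{theo:Gram_Lyapunov} block by block; the resulting identities among the blocks of $\mat{A},\mat{B},\mat{\omatrix}$ allow one to exhibit an explicit positive-definite solution of the Lyapunov/Riccati inequality associated with the error system (equivalently, to show that a suitably scaled and all-pass–completed error system $\tfrac{1}{2\sigma_k}(H_k-H_{k-1})$ is inner), and the bounded real lemma then yields $\|H_k-H_{k-1}\|_{\mathcal{H}_\infty}\le 2\sigma_k$. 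Second, the \textbf{telescoping}: since truncation of a balanced system is again balanced with the leading Hankel singular values (Subsec.~\ref{subsub:truncation}) and distinctness is inherited, I build a chain $H=H_n\to H_{n-1}\to\dots\to H_{\dimred}=\widetilde H$ of stable minimal balanced systems, apply the one-step bound at each link, and sum by the triangle inequality:
\[
\|H-\widetilde H\|_{\mathcal{H}_\infty}\le\sum_{k=\dimred}^{n-1}\|H_{k+1}-H_k\|_{\mathcal{H}_\infty}\le\sum_{k=\dimred}^{n-1}2\sigma_{k+1}=2\sum_{i=\dimred+1}^{n}\sigma_i .
\]

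The main obstacle is the one-step bound: converting the block relations extracted from the partitioned Lyapunov equations into a valid bounded-real certificate (equivalently, verifying the all-pass/inner property of the scaled error system) is the computational and conceptual core of the argument, and it is precisely here that the hypothesis of pairwise distinct Hankel singular values enters — it guarantees that at every truncation step the removed block is the scalar $\sigma_k$ (so the corresponding $\Sigma_2$ is a multiple of the identity) and that every intermediate system $H_k$ remains minimal and balanced. Everything else — the Laplace-transform reduction, the causality argument, and the telescoping — is routine once this lemma is in hand.
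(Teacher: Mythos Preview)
Your proposal is correct and follows precisely the classical Enns--Glover approach: the $\mathcal{H}_\infty$ bound via the one-step truncation lemma plus telescoping, combined with the Plancherel/causality argument to pass from the frequency domain to the finite-horizon $\Ltwo(0,t)$ estimate. The paper does not supply its own proof of this theorem at all --- it simply cites Enns \cite{enns1984model} and Glover \cite{glover1984all} --- so you have in effect reconstructed the argument the paper defers to.
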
	
	
	The error bound given in \eqref{error_bound} depends on the reduced order $\dimred$ only via the sum of discarded Hankel singular values for which we have
	\begin{align*}
		\sum_{i=\ell+1}^{n}\sigma_i = \trace(\mat{\Sigma}_2) = \trace(\mat{\Sigma}) -\trace(\mat{\Sigma}_1), 
	\end{align*}
	$\text{where } \mat{\Sigma}_1=\diag(\sigma_1,\ldots,\sigma_\dimred) \text{ and } \mat{\Sigma}_2=\diag(\sigma_{\dimred+1},\ldots,\sigma_n)$.  According  to Theorem \ref{theo:balancing_trans} we have that $\cgramtr=\ogramtr=\mat{\Sigma}$, i.e., the controllability and observability Gramians of the balanced system are equal to the diagonal matrix $\mat{\Sigma}$. Further, the two Gramians of the reduced-order system are also equal and given by the diagonal matrix  $\mat{\Sigma}_1$. We recall Theorem \ref{theo:gram_fun} which states that the output energy contained in the output $Z$ when the system is released from initial state $y$ for zero input is given by
	$\ofun(y)= \|Z\|^2_{\Ltwo(0,\infty)} =  y^{\top} \ogram \,y$, see \eqref{observab_function} and \eqref{gram_fun}. Given the balanced realization of the original system \eqref{sys_org} the output  energy related to  the initial state $Y(0)=\one_n$, i.e., the vector with all entries equal one,   is $\ofun(\one_n)=\one_n^\top \mat{\Sigma} \one_n = \trace(\mat{\Sigma})$. Analogously, for the reduced-order system the output  energy related to  the initial state $\widetilde Y(0)=\one_\dimred$ obtained by truncation of $\one_n$  to the first $\dimred$ entries is given by  $\ofun(\one_\dimred)= \|\widetilde Z\|^2_{\Ltwo(0,\infty)}= \trace(\mat{\Sigma_1})$. Thus the sum of discarded Hankel singular values which is $\trace(\mat{\Sigma}) -\trace(\mat{\Sigma}_1) $ can be interpreted as the loss of output  energy due to truncation if the balanced system  starts with an initial ``excitation'' where all (balanced) states are  equal one and no forcing is applied, i.e., the input $g$ is zero.
	
	Obviously, the above sum is decreasing in  $\dimred$ and reaches zero for $\dimred=n$.
	This motivates the introduction of the following relative selection criterion
	\begin{align}
		\selcrit(\ell) = \frac{\trace(\mat{\Sigma_1})}{\trace(\mat{\Sigma})}, \quad \text{for } \dimred=1,\ldots,n,
	\end{align}
	with values in $(0,1]$. It is increasing in $\dimred$ reaching $\selcrit(\dimred)=1$ for $\dimred=n$. $\selcrit(\dimred)$ can be used as a measure of the proportion of the output energy which can be captured by a reduced-order system of dimension $\dimred$. The selection of an appropriate dimension  $\dimred $ can be based on a prescribed  threshold level  $\alpha\in(0,1]$ for that proportion for which the minimal reduced order  reaching that level is defined  by
	\begin{align}
		\label{def_min_reduced_order}
		\dimred_\alpha = \min \{\dimred: \selcrit(\dimred)\ge \alpha\}.
	\end{align}

	\begin{remark}~\label{rem:error:bounds}
		\begin{enumerate}
			\item 
			Since the Hankel singular values only depend on the original model \eqref{sys_org} the error bound in \eqref{error_bound} can be computed a priori. Given the input $g$ this allows to control the approximation error by the selection of the reduced order $\dimred$.
			\item The reverse triangular inequality yields $\|Z-\widetilde{Z}\|_{\Ltwo(0,t)}\ge \big|\|Z\|_{\Ltwo(0,t)}- \|\widetilde{Z}\|_{\Ltwo(0,t)}\big|$. Hence, the error bound in   \eqref{error_bound} also applies to the difference of the $\Ltwo$-norms of the output $Z$ and its approximation $\widetilde Z$.
			
			\item The error bound can be generalized to systems with Hankel singular values with multiplicity larger than one. In that case they only need to be counted once, leading to tighter bounds, see Glover \cite{glover1984all}.
			\item	The assumption  of a zero initial state is quite restrictive and usually not fulfilled in applications.  We refer to  Beattie et al.~\cite{beattie2017model}, Daraghmeh at al.~\cite{daraghmeh2019balanced},  Heinkenschloss et al.~\cite{heinkenschloss2011balanced} and Schr\"oder and Voigt \cite{schroder2022balanced} where the authors study the general case of linear systems with non-zero initial conditions and derive error bounds with extra terms accounting for the initial condition.
			\item  \label{temp_shift}
			The linear systems considered in the present paper are obtained by semi-discretization of the heat equation \eqref{heat_eq2} with the associated boundary and initial conditions. The initial value $Y(0)=y_0  \in \R^n$ represents the initial temperatures $Q(0,\cdot,\cdot)$ at the corresponding grid points. In general we have $y_0\neq 0_n$. However, for the case of an homogeneous  initial temperature distribution with $Q(0,x,y)=Q_0$ for all $(x,y)\in \Domainspace$ and some constant $Q_0$ one can derive an equivalent linear system with zero initial value. That case is considered in our numerical experiments in Sec.~\ref{Numerical_exp}. 
			
			The idea is to shift the temperature scale by $Q_0$ and describe the temperature distribution by $\widehat Q(t,x,y)=Q(t,x,y)-Q_0$. Then the initial condition reads as $\widehat Q(0,x,y)=0$. Thanks to linearity of the heat equation  \eqref{heat_eq2} and the boundary conditions  $\widehat Q$ also satisfies the heat equation  together with the boundary conditions if the inlet and ground temperature appearing in the Dirichlet and Robin condition  are shifted accordingly, i.e., $\Qin$ and $\Qg$ are replaced by $\Qin-Q_0$ and $\Qg-Q_0$.  Semi-discretization of the modified  heat equation generates a linear system \eqref{sys_org} with zero initial condition  $Y(0)=0_n$ for which we can apply the error estimate given in \eqref{error_bound}. The aggregated characteristics of the temperature distribution corresponding to the  model with constant but non-zero initial temperature $Q_0$ and forming the system output  $Z$ can easily be derived from the output of the modified system.  In a post-processing step the inverse shift of the temperature scale is applied where  $Q_0$ is added to all temperatures.
		\end{enumerate}
	\end{remark}
	
	\section{Numerical Results}
	\label{Numerical_exp}		
	In this section we present results of numerical experiments on model order reduction for the system of ODEs \eqref{Matrix_form1} resulting from semi-discretization of the heat equation \eqref{heat_eq2} which  models    the spatio-temporal temperature distribution of a geothermal storage. For describing the input-output behavior of that storage we use the aggregated characteristics of the spatial temperature distribution introduced in  Sec.~\ref{sec:Aggregate}.  Further we work with the approximation of  \eqref{Matrix_form1} by an analogous system as explained in Sec.~\ref{sec:analog:system}.  		 	 
	Recall, that here it is assumed that the pump is  always on and during the waiting periods the inlet temperature $\Qin$ is set to be the average temperature $\Qf$ in the \phx fluid.
	
	We present  approximations of the aggregated characteristics obtained from reduced-order models together with  error estimates   provided in Theorem \ref{theo_errorbound} for systems with zero initial conditions. Here we apply the approach sketched in Remark \ref{rem:error:bounds}, item \ref{temp_shift}, where in a preprocessing step we first shift the temperature scale by the constant initial temperature $Q_0$ to obtain a linear system \eqref{sys_org} with zero initial value for which we can compute the error bounds. Then in a postprocessing step the inverse scale shift is applied.

	The experiments are	based on Algorithm~\ref{alg_baltrunc} and  are performed for the cases of one and three  \phxsk. The model with three \phxs is more realistic and shows more structure of the spatial temperature distribution in the storage whereas for one \phx  that  distribution is less heterogeneous.

	After explaining the experimental settings in Subsec.~\ref{Num_Settings} we start in Subsec.~\ref{subsec:num_ex1} with an experiment  where the system output consists of only one variable  which is the average temperature $\Qm$ in the storage medium. For  that case we restrict ourselves to charging and discharging  the storage  without intermediate waiting periods. Note that the analogous system requires during the waiting periods  the knowledge of the average \phx temperature which in this experiment is not included in the output variables.

	In order to allow to work with waiting periods we add in Subsec.~\ref{subsec:num_ex2} a second output variable which is the average \phx temperature $\Qf$.  Then we add a third output variable which is in Subsec.~\ref{subsec:num_ex3} the average temperature at the outlet $\Qout$ whereas in in Subsec.~\ref{subsec:num_ex3b}   the average temperature at the bottom $\Qbottom$ of the storage is used as third output variable.    The outlet temperature $\Qout$ is interesting for the management of heating systems equipped with a geothermal storage while $\Qbottom$ allows to quantify the transfer of thermal energy to the environment at the bottom boundary of the geothermal storage. Finally,  Subsec.~\ref{subsec:num_ex4} presents results for a model where the output contains all of the four above mentioned quantities.

	\subsection{Experimental Settings}	
	\label{Num_Settings}
	\begin{figure}[h!]
		\centering 
		\includegraphics[width=0.49\linewidth]{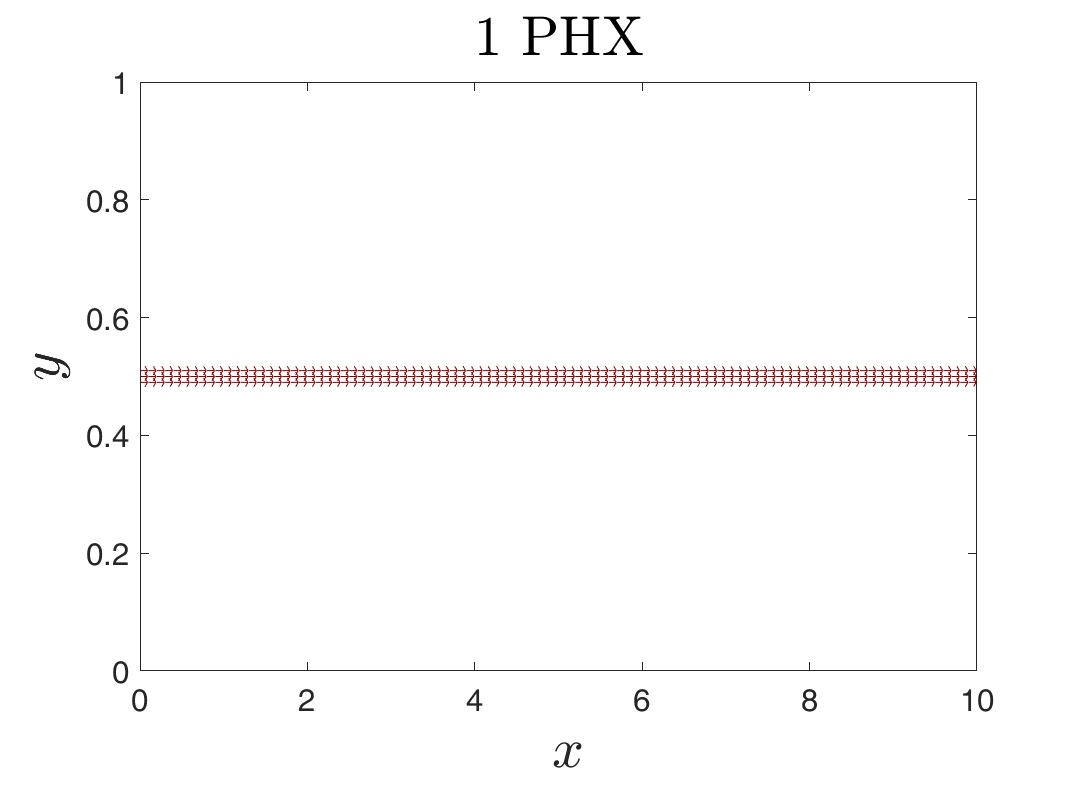}
		\includegraphics[width=0.49\linewidth]{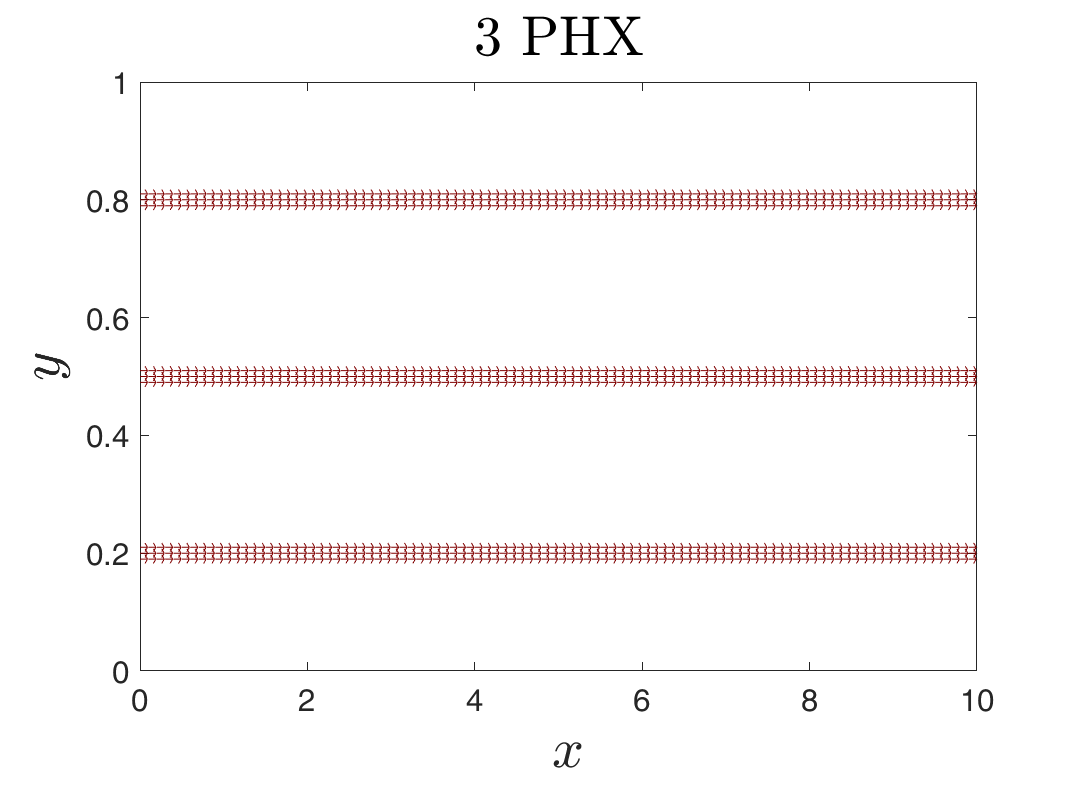}
		\caption{					Computational domain with horizontal straight \phxsk. 
			Left:  one \phx. Right: three \phxsk.					
		}
		\label{BT_pipes}
	\end{figure}
	For our numerical examples we use the model and discretization parameters taken from our companion paper  \cite{takam2021shortb} and given in Table \ref{tab:cap1}.			
	The storage is charged and discharged either by a single  \phx or by three  \phxs filled with a moving fluid, see Fig.~\ref{BT_pipes}. Thermal energy is stored by raising the temperature of the storage medium.
	We recall the open architecture of the storage which is only insulated at the top and the side but not at the bottom. This leads to an additional heat transfer to the underground for which we assume a constant temperature of  $\Qg(t)=15 \Celsius$.  In all experiments we start with a homogeneous initial temperature distribution with the constant temperature $Q_0=10 \Celsius$.
	The fluid is assumed to be water while the storage medium is dry soil. During charging  a pump moves the fluid with constant velocity $\vconst$ arriving with constant temperature $\Qin(t)=\QinC=40\Celsius$  at the inlet. This temperature   is higher than in the vicinity of the \phxsk, thus induces a heat flux into the storage medium. During discharging the inlet  temperature is $\Qin(t)=\QinD=5\Celsius$ leading to a cooling of the storage. At the outlet we impose a vanishing diffusive heat flux, i.e.~during pumping there is only a convective heat flux. 
	We also consider waiting periods where the pump is off. This helps to mitigate saturation effects in the vicinity of the \phxs   which reduce  the injection and extraction efficiency. During that waiting periods the injected heat (cold) can propagate to other regions of the storage.  Without convection of the \phx fluid   we have only diffusive propagation of heat in the storage and the transfer over the bottom boundary.

	\begin{table}[h]
		\centering	
		\begin{tabular}[T]{|lc|rrr|} \hline
			Parameters&&&Values& Units\\
			\hline		
			\textbf{Geometry} & & &&\\
			width  &$l_x$ &&$10$~&$m$ \\
			height  &$l_y$ &&$1$&$m$ \\
			depth  &$l_z$ &&$1$&$m$ \\
			diameter of \phxs &$d_P$ && $0.02$&$ ~m$ \\
			number of \phxs &$n_P$&& $1$ and $ 3$& \\\hline 
			\textbf{Material} & && &\\
			\textit{medium (dry soil)} & && &\\
			\hspace*{1em} mass density &$\rhom$ && $2000$ &$~kg/m^3$\\
			\hspace*{1em}  specific heat capacity &$\cpm$ && $800$& $~J/kg\, K$\\
			\hspace*{1em}  thermal conductivity  &$\kappam$ &&$1.59$ &~ $W/m\,  K$\\	
			\hspace*{1em}  thermal diffusivity ~~$\kappam(\rhom \cpm)^{-1}$& $\am$ && $9.9375 \times 10^{-7}$&$m^2/s$\\		
			\textit{fluid (water)} & && &\\
			\hspace*{1em} mass density &$\rhof$ &&$998$ ~&$kg/m^3$\\
			\hspace*{1em} specific heat capacity&$\cpf$&&$4182$ & $~J/kg\, K$\\
			\hspace*{1em}  thermal conductivity  &$\kappaf$ &&$0.60$ &~ $W/m\,  K$\\
			\hspace*{1em}  thermal diffusivity ~~$\kappaf(\rhof\cpf)^{-1}$ &$\af$&& $1.4376\times 10^{-7}$&$m^2/s$\\			
			velocity during pumping & $~\vconst$ && $ 10^{-2}$&$~m/s$\\
			heat transfer coeff.~ to underground & $\heattransfer$ && $10$&$ W/(m^2~ K)$\\	
			initial temperature 	&$Q_0$ &&  $10 $  &$~\Celsius$\\
			inlet temperature: charging  & $\QinC$ && $40 $&$~\Celsius$\\
			\phantom{inlet temperature:} discharging  &	$\QinD$ && $5 $&$\Celsius$\\
			underground temperature &	$\Qg$ && $15$&$ ~\Celsius$\\
			\hline 
			\textbf{Discretization} &&& & \\
			step size&$h_x$&&$0.1$&$m$\\
			step size& $h_y$&&$0.01$&$~m$\\
			time step&  $\tau$ && $1$&$ ~s$\\		
			time  horizon &  $~T$ &&  $72$ &$ ~h$ \\
			\hline
		\end{tabular}

		\medskip	
		\caption{ Model and discretization parameters.}	
		\label{tab:cap1}	
	\end{table}
	For the chosen discretization parameters the dimension of the state equation  \eqref{sys_org} resulting from the space-discretization of the heat equation is $n=10201$.  		
	The output matrix $\omatrix$ depends on the number of output variables  and changes in the various experiments.

	\subsection{One Aggregated Characteristic: ~$\Qm$}
	\label{subsec:num_ex1}
	In this example we consider a model with only a single output variable $Z_1=\Qm$, the average temperature of the medium. Then the system output does not contain the average fluid temperature $\Qf$ which serves in the analogous system as inlet temperature during waiting periods. Therefore we consider only charging and discharging periods without intermediate waiting periods. For horizon time $T=72$ hours we divide the interval $[0,T]$ into a charging period  $I_{C}=[0, T/2]$ followed by a discharging period is $I_D=(T/2,T]$.		
	The input function  $g: ~[0, T] \to \R^2$  is defined as 
	\begin{align*}
		g(t)=(\Qin(t),\Qg(t))^{\top} \quad \text{with } \quad  \Qin(t)=\begin{cases}
			\QinC =40 \Celsius& \quad \text{for}~~ t \in I_{C} ~~\text{(charging)},\\
			\QinD =5 \Celsius& \quad \text{for}~~ t \in I_{D}  ~~\text{(discharging)},\\
		\end{cases}
	\end{align*} 
	with the piece-wise constant inlet temperature $\Qin(t)$ and the constant underground temperature $\Qg(t)=15 \Celsius$.
	The output matrix in this case is given by $\omatrix=\OutputM$ which is given in our companion paper \cite[Sec.~4]{takam2021shortb}.
	\begin{figure}[h!]
		\centering 
		\includegraphics[width=0.49\linewidth]{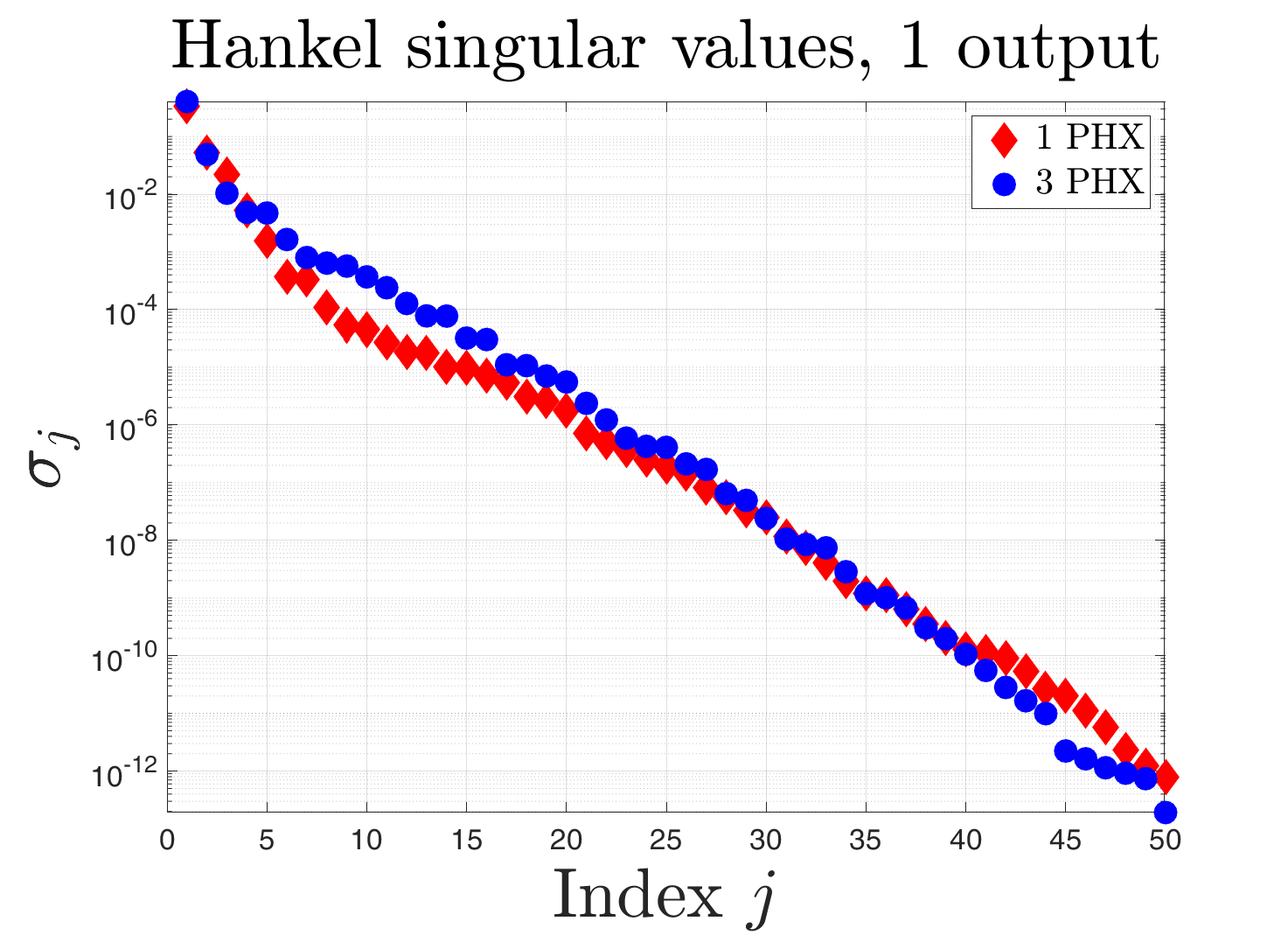}
		\includegraphics[width=0.49\linewidth]{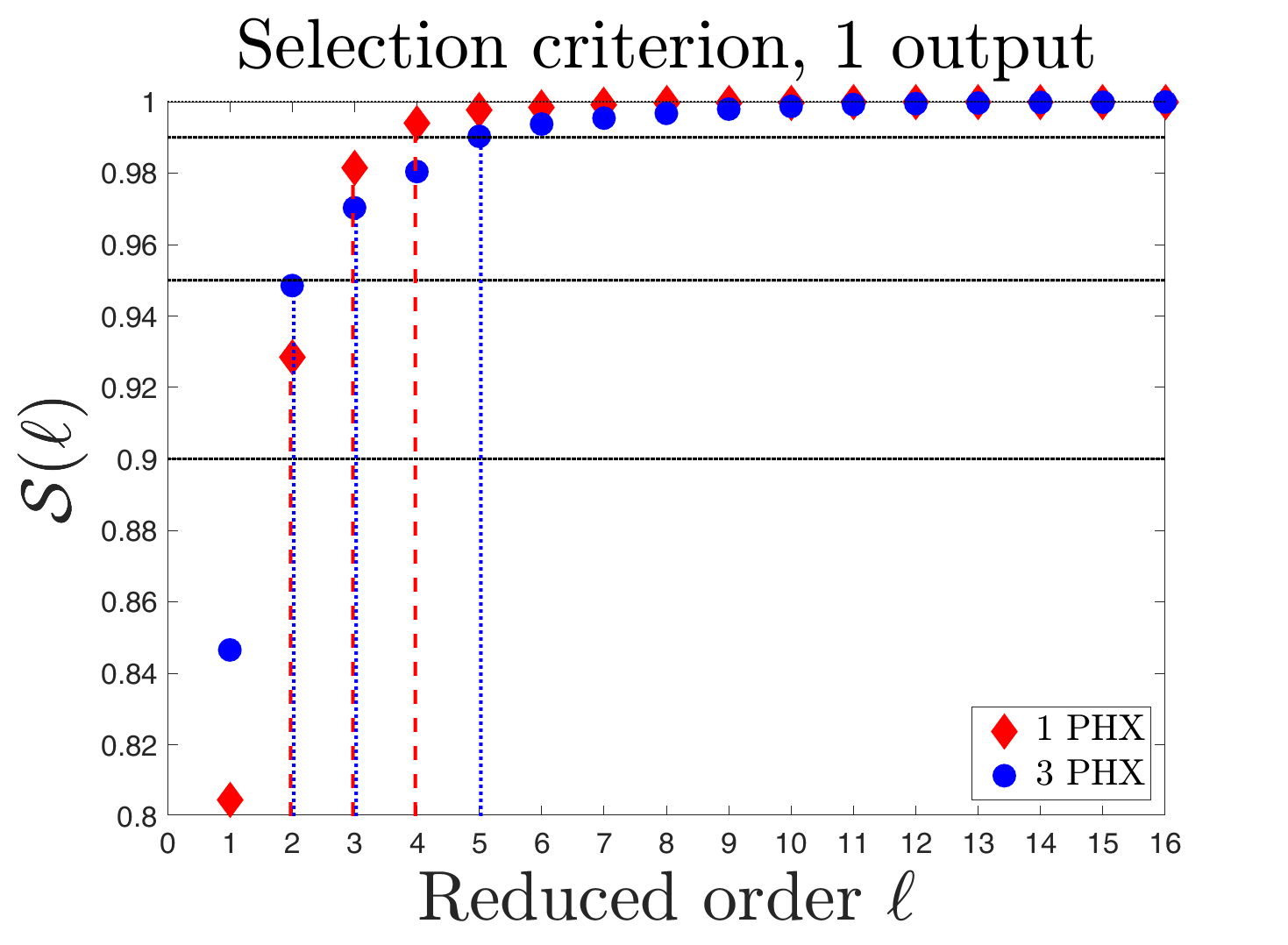}	
		\caption{Model with one output $Z=\Qm$:\newline 
			Left: first 50 largest Hankel singular values, 
			Right: selection criterion.  }
		\label{selection1CTn}
	\end{figure}
	
	In Fig.~\ref{selection1CTn}, the left panel shows first 50 largest Hankel singular values associated to the most observable and most reachable states whereas in  the right panel we plot the selection criterion against the reduced order $\dimred$ (red  for 1 \phx and blue  for 3 \phxsk). For the first 50 singular values we observe for both models  that they are all distinct and decrease fast by 12 orders of magnitude. The first 20 singular values decrease faster for the model with one \phx than for the 3 \phx model.

	\begin{table}[h]
		\[\begin{array}{|@{\hspace*{0.5em}}l|r@{\hspace*{-0.08em}/\hspace*{-0.08em}}l|r@{\hspace*{-0.08em}/\hspace*{-0.08em}}l|r@{\hspace*{-0.08em}/\hspace*{-0.08em}}l|}
			\hline
			\text{Output } \hspace*{7em}\Big|~~~~\alpha~~& \multicolumn{2}{c|}{90\%}& \multicolumn{2}{c|}{95\%}& \multicolumn{2}{c|}{99\%}\\[0.5ex]\hline
			\rule{0pt}{2.5ex}
			Z=\Qm & \hspace*{1em}2&2\hspace*{1em} & \hspace*{1em}3&3\hspace*{1em} &\hspace*{1em}4&5 \hspace*{1em}\\[0.5ex]\hline \rule{0pt}{2.5ex}
			Z=(\Qm,\Qf)^\top & 4&4 & 5&6 &11&11 \\[0.5ex]\hline \rule{0pt}{2.5ex}
			Z=(\Qm,\Qf,\Qbottom)^\top & 5&6 & 7&8 &12&13 \\[0.5ex]\hline \rule{0pt}{2.5ex}
			Z=(\Qm,\Qf,\Qout)^\top & 8&8 & 10&9 &15&14 \\[0.5ex]\hline \rule{0pt}{2.5ex}
			Z=(\Qm,\Qf,\Qout,\Qbottom)^\top & 9&9 & 11&11 &17&16 ~~\\[0.5ex]\hline
		\end{array}\]
		\caption{Minimal reduced orders  $\dimred_\alpha = \min \{\dimred: \selcrit(\dimred)\ge \alpha\}$, 1 \phx/ 3 \phxsk.}
		\label{tab:ReducedOrders}
	\end{table}
	
	We recall that the selection criterion $\selcrit(l)$ provides an  estimate of the proportion of output energy of the original system that can be captured by the reduced-order system of dimension $\dimred$. In the right panel of Fig.~\ref{selection1CTn} we draw vertical red dashed (one PHX) and blue dotted lines (three PHX) to indicate the reduced orders $\dimred$ for which the selection criterion $\selcrit(l)$ exceeds the threshold values $\alpha=90\%, 95\%,99\%$ for the first time, respectively. This allows to determine graphically the associated minimal reduced orders $\dimred_\alpha= \min \{\dimred: \selcrit(\dimred)\ge \alpha\}$ which  have been introduced in \eqref{def_min_reduced_order}. The resulting values are also given in Table \ref{tab:ReducedOrders}. It can be seen that already with $\dimred_{0.9}=2$ states the reduced-order system can capture more than $90 \%$ of the output energy  while with $\dimred_{0.99}=4$ states the level $99\%$ is exceeded for the one \phx model whereas the three \phx model is only slightly below that level and requires  $\dimred_{0.99}=5$ states.

	\begin{figure}[h!]
		\centering
		\includegraphics[width=0.49\linewidth]{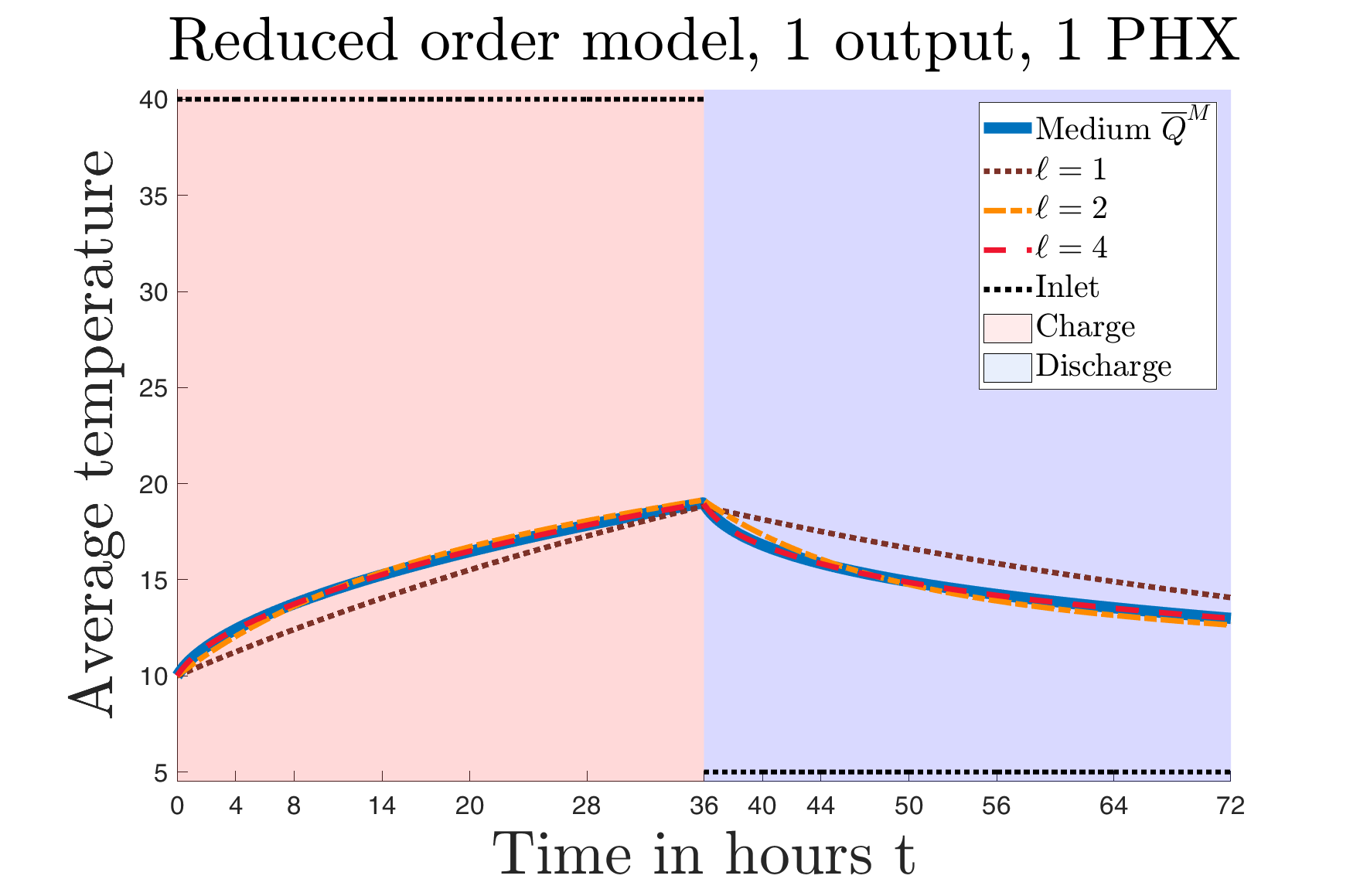}
		\includegraphics[width=0.49\linewidth]{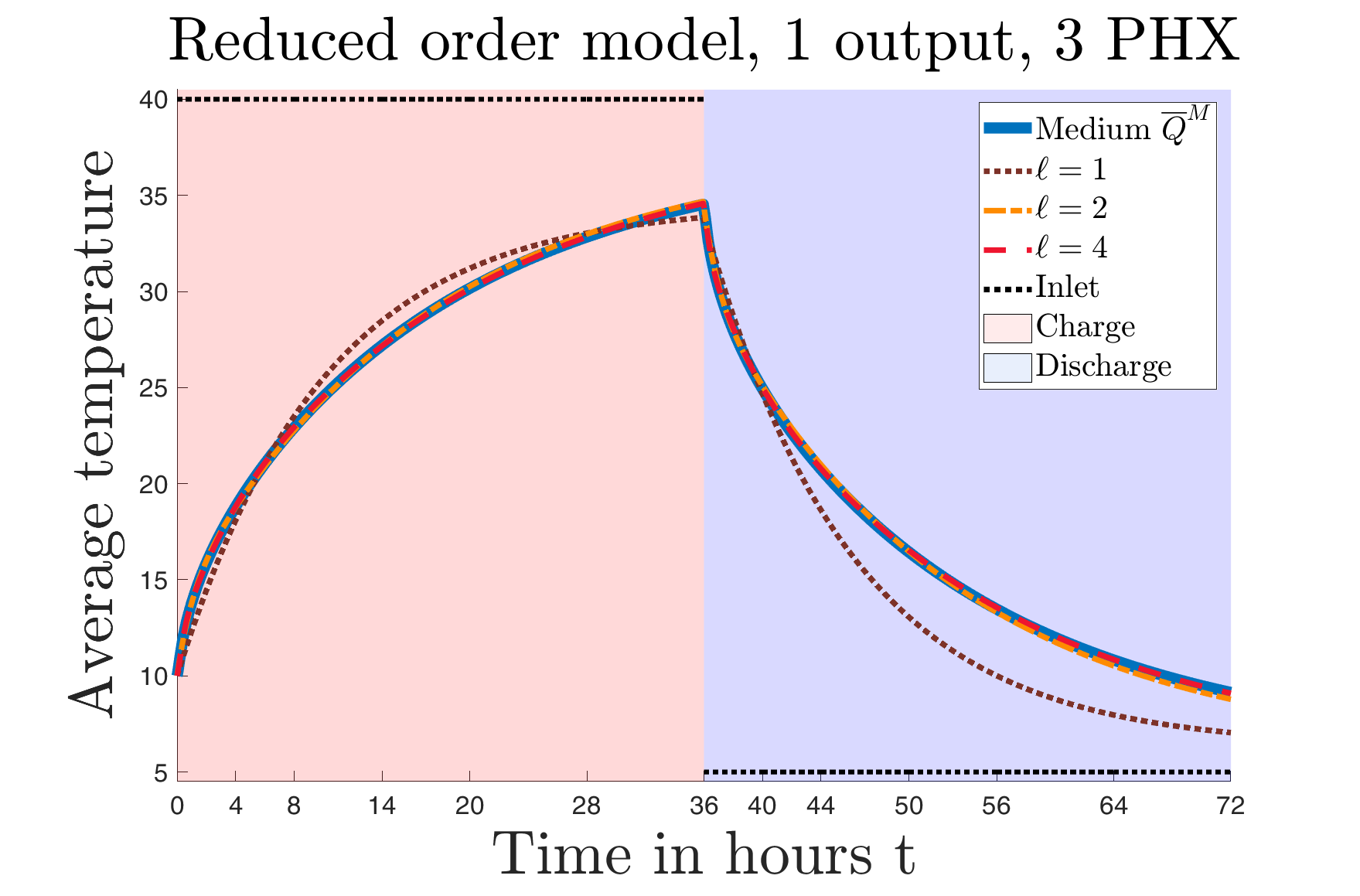}
		
		\caption{Model with one output $Z=\Qm$: \quad Approximation of the output for $\dimred=1,2,4$. \newline  Left: one \phx, Right three \phxsk. }
		\label{13CTn2D}
	\end{figure}
	\begin{figure}[!h]
		\centering
		\includegraphics[width=0.49\linewidth]{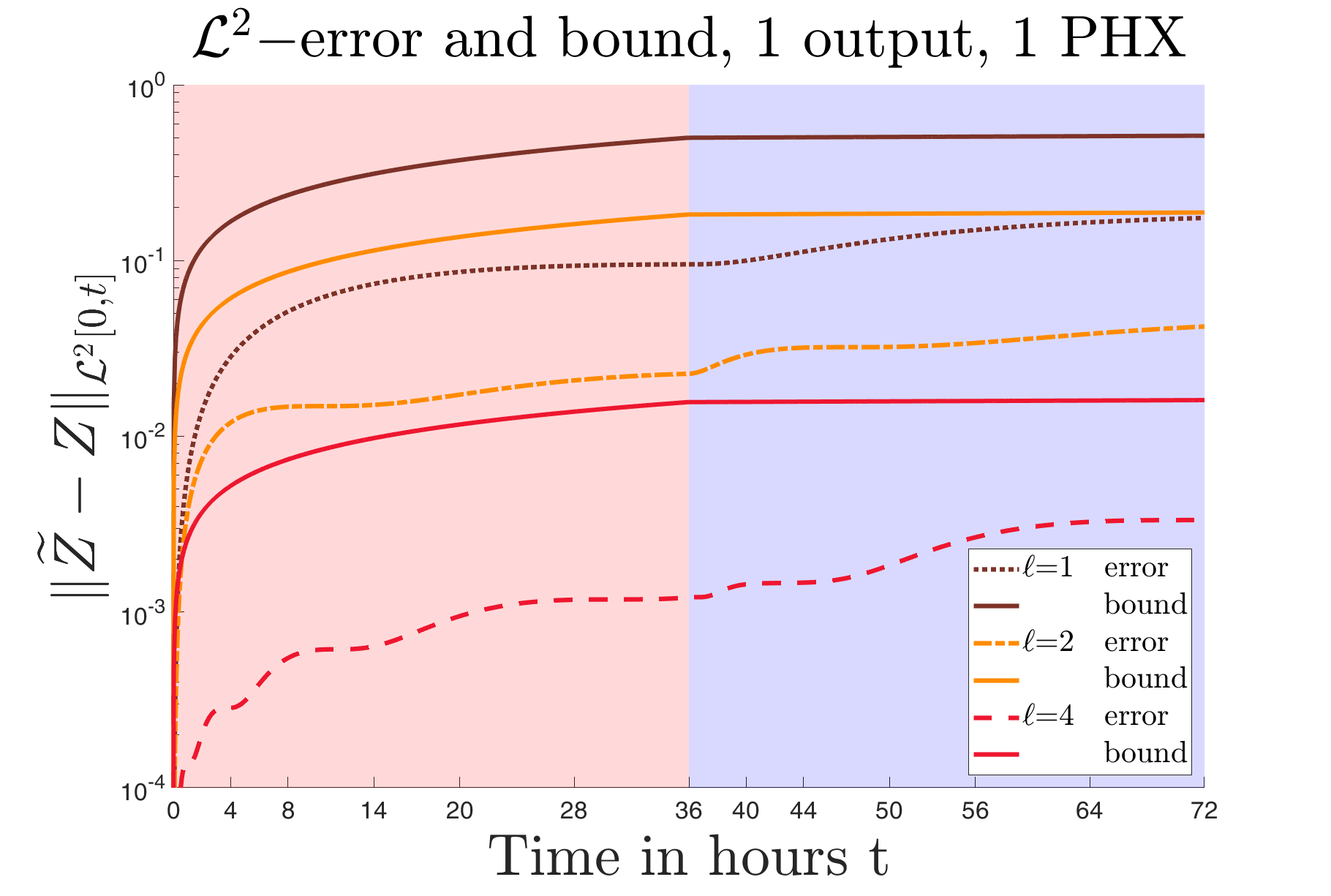}
		\includegraphics[width=0.49\linewidth]{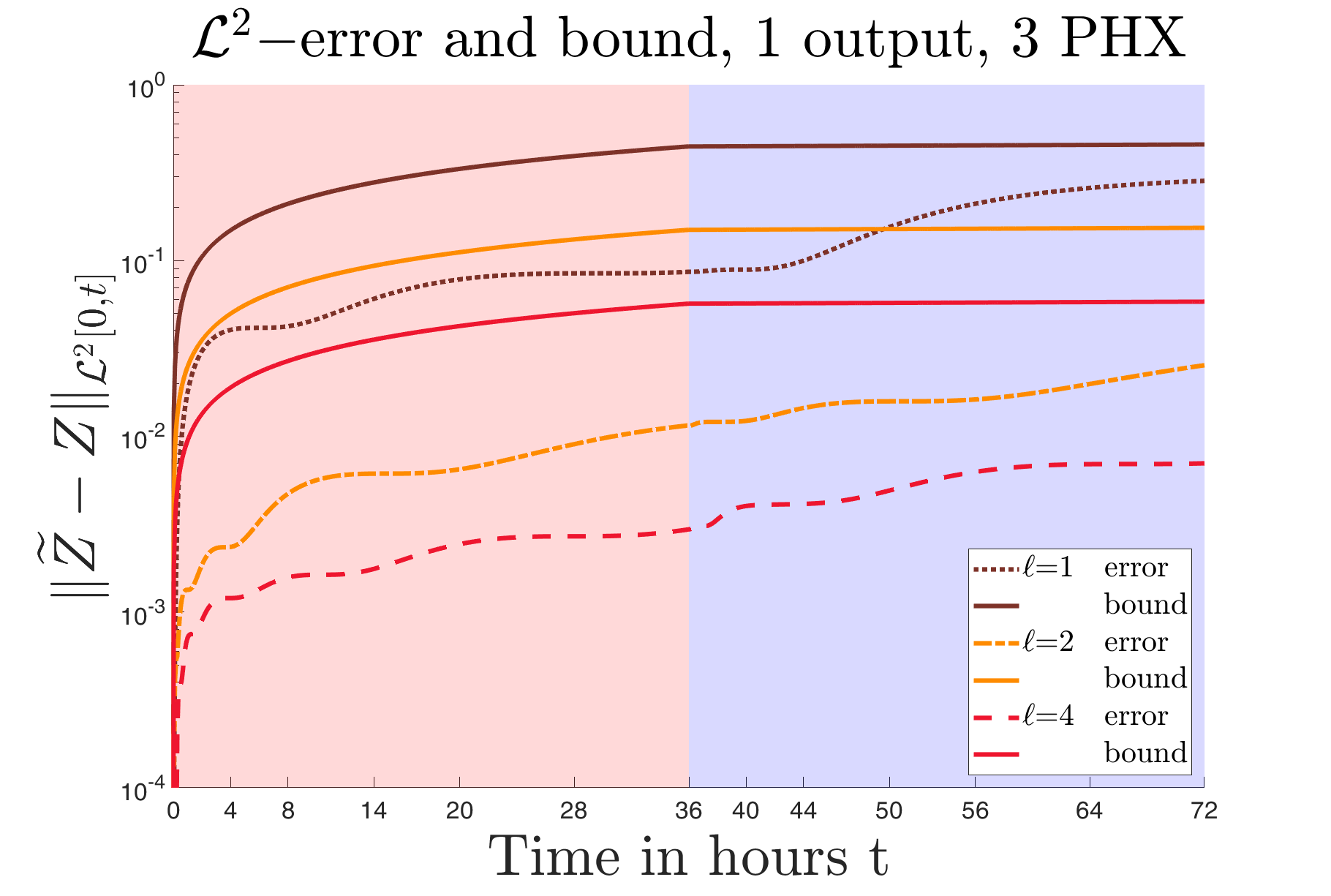}
		\caption{Model with one output $Z=\Qm$: \quad $\Ltwo$-error and error bound for $\dimred=1,2,4$.\newline  Left: one \phx, Right three \phxsk.}
		\label{error1CTn}
	\end{figure}

	Fig.~\ref{13CTn2D} allows to evaluate the actual quality of the output approximation. It plots  the average temperature in the medium $Z(t)=\Qm(t)$ against time and compares that system output of the original system (solid blue line) with the approximation $\widetilde Z(t)$ from the reduced-order model (brown, orange, red lines) for  $\dimred=1,2,4$. The figures also show the inlet temperature $\Qin(t)$ (black dotted line) which are constant and equal to $\QinC=40 \Celsius$ during charging (light red region) and $\QinD=5 \Celsius$ during discharging (light blue region), respectively. 
	The figure shows that both for one and three \phxsk, the reduced-order system captures well the input-output behaviour of the original high-dimensional system already for $\dimred \geq 2$. For $\dimred=1$ the approximation is less good  as expected in view of the low value of the selection criterion (see Fig.~\ref{selection1CTn}). 
	
	Finally, 
	Fig.~\ref{error1CTn} plots the $\Ltwo$-error  $\|Z-\widetilde{Z}\|_{\Ltwo(0,t)}$ against time $t$ and compares with the associated error bound given in Theorem \ref{theo_errorbound} for $\dimred=1,2,4$. Note that both quantities are non-decreasing in $t$. The error bounds grow less during discharging since here the  growth of the $\Ltwo$-norm of the input  $g$ is smaller due to the smaller inlet temperature $\Qin$, see Theorem \ref{theo_errorbound}. As expected from that theorem, the error bounds decrease with $\dimred$ and this is also the case for the actual error.

	The next examples will show that  the number of states needed to capture well  the input-output behavior of the system  may increase considerably if we add more aggregated characteristics to the system output.

	\subsection{Two Aggregated Characteristics: ~$\Qm,\Qf$}
	\label{subsec:num_ex2}
	\begin{figure}[h!]
		\centering 
		\includegraphics[width=0.49\linewidth]{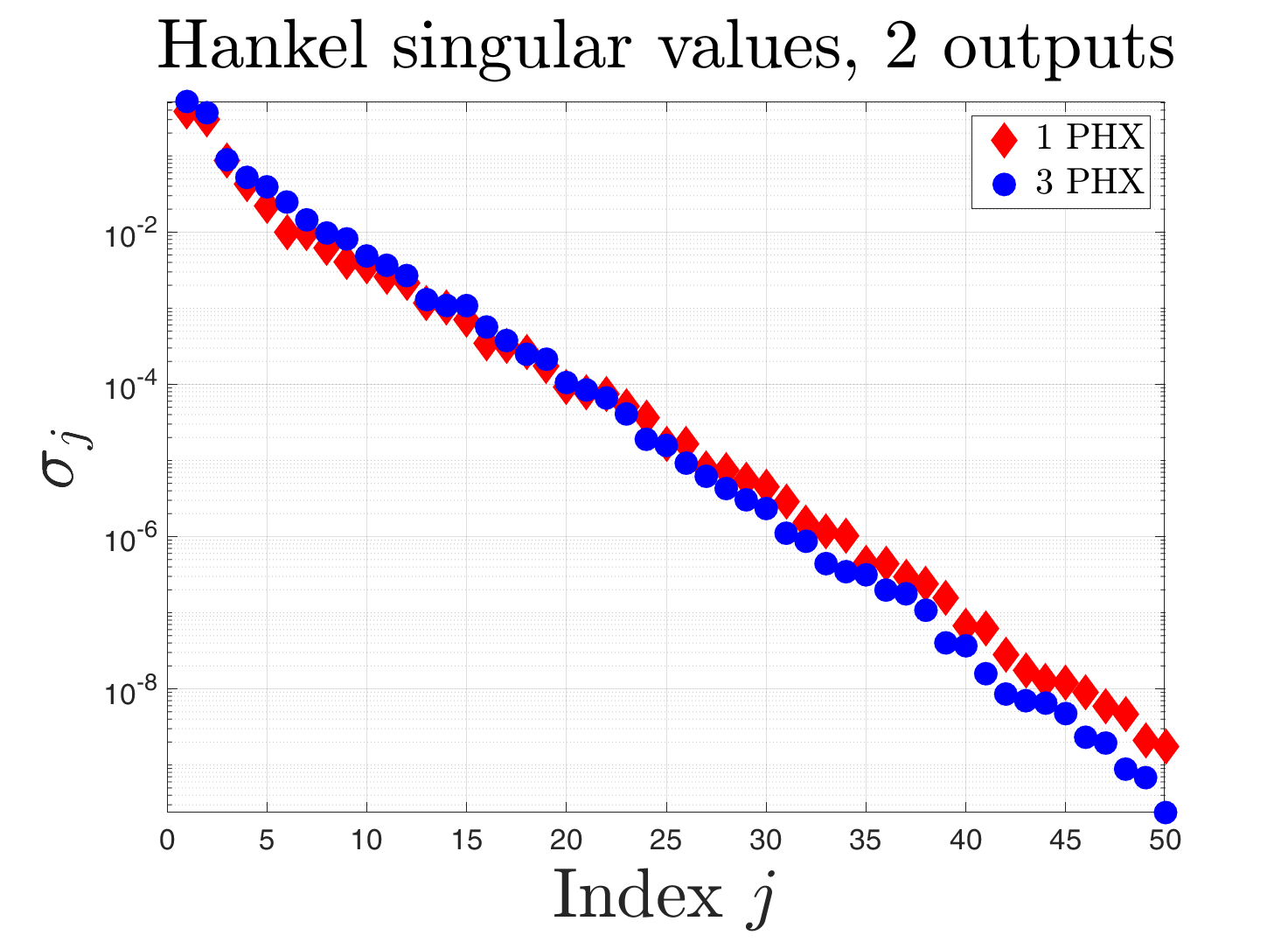}
		\includegraphics[width=0.49\linewidth]{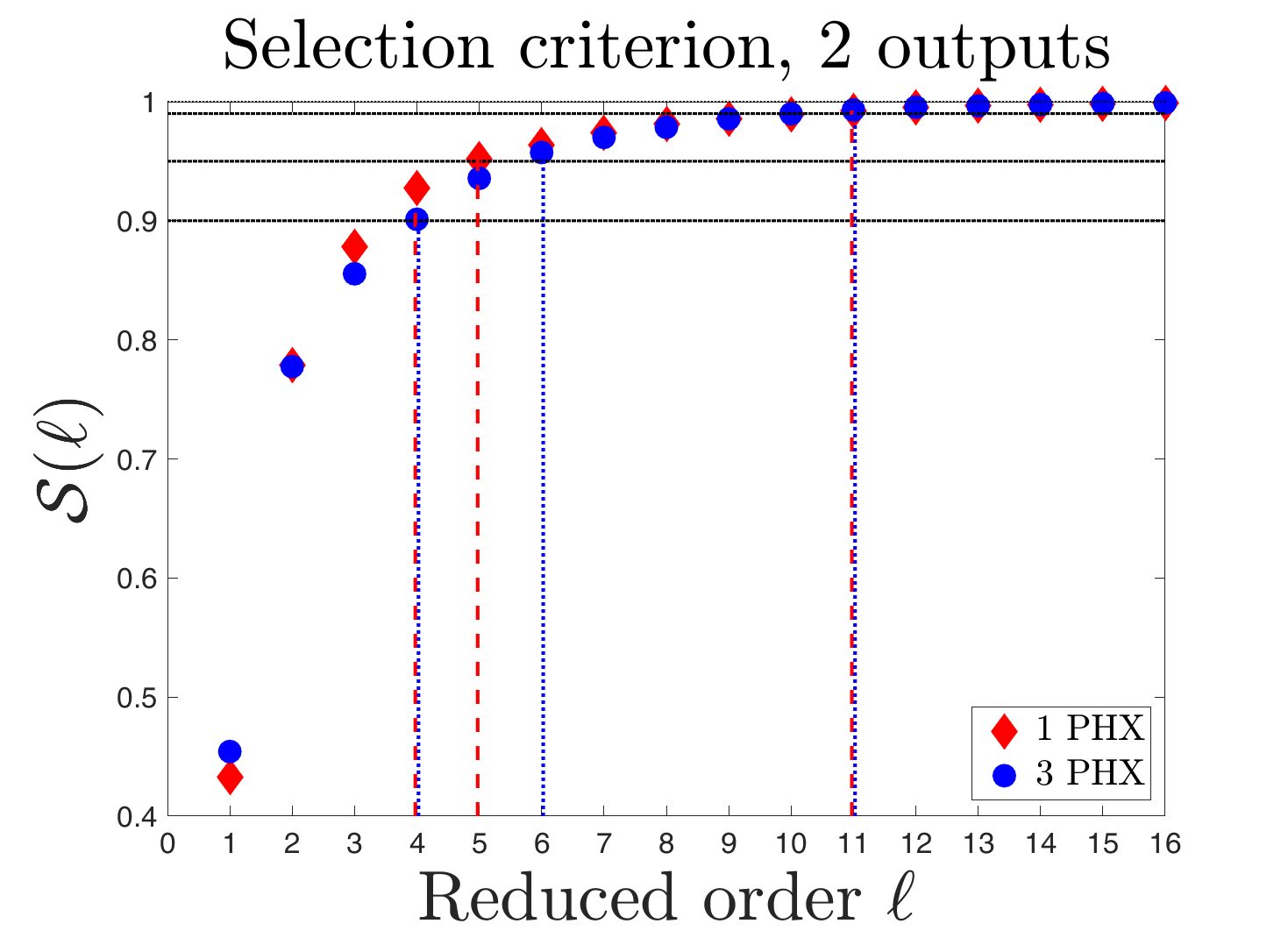}	
		\caption{Model with two outputs $Z=(\Qm,\Qf)^\top$:\newline 
			Left: first 50 largest Hankel singular values, 
			Right: selection criterion.  }
		\label{selection3Cmf}
	\end{figure}
	
	In this example we add to the system output the average temperature of the \phx fluid leading to the two-dimensional output  $Z=(\Qm,\Qf)^\top$. Since in the analogous system $\Qf$ is used as inlet temperature we now can include also waiting periods between periods of charging and discharging allowing the storage to mitigate saturation effects.  For time horizon $T=72$ hours  we divide the simulation time interval $[0,T]$ into charging, discharging and waiting periods  with 
	\begin{center}
		\begin{tabular}[t]{ll}
			$I_{C}=~~[0, 4] ~~\cup ~~[8, 14] \cup [20, 28]$, & charging,  \\ [0.5ex]
			$I_{D}=[36, 40] \cup [44,50] \cup [56, 64]$,& discharging,\\[0.5ex]
			$I_{W}=[0,72]\setminus (I_C \cup I_D$), & waiting,
		\end{tabular}
	\end{center}
	which are also depicted in Fig.~\ref{BT3Cmf2}.
	The two-dimensional  input function  $g$  is defined as 
	\begin{align}\label{ex:input}
		g(t)=(\Qin(t),\Qg(t))^{\top} ~ \text{with } ~  \Qin(t)=\begin{cases}
			\QinC =40 \Celsius& \text{for}~~ t \in I_{C} ~~\text{(charging)},\\
			\QinD =5 \Celsius& \text{for}~~ t \in I_{D}  ~~\text{(discharging)},\\
			\Qf\!(t) &  \text{for}~~ t \in I_{W}  ~~\text{(waiting)}.\\
		\end{cases}
	\end{align} 
	Here, the inlet temperature $\Qin(t)$ is the piece-wise constant during charging and discharging but time-dependent and equal to $\Qf(t)$ during waiting periods. Again the underground temperature is constant with $\Qg(t)=15 \Celsius$.
	The two rows of the $2\times n$ output matrix $\omatrix$ are $\OutputM$ and $\OutputF$ which are  given in our companion paper \cite[Sec.~4]{takam2021shortb}.
	
	\begin{figure}[h]
			\centering
			\includegraphics[width=.49\linewidth]{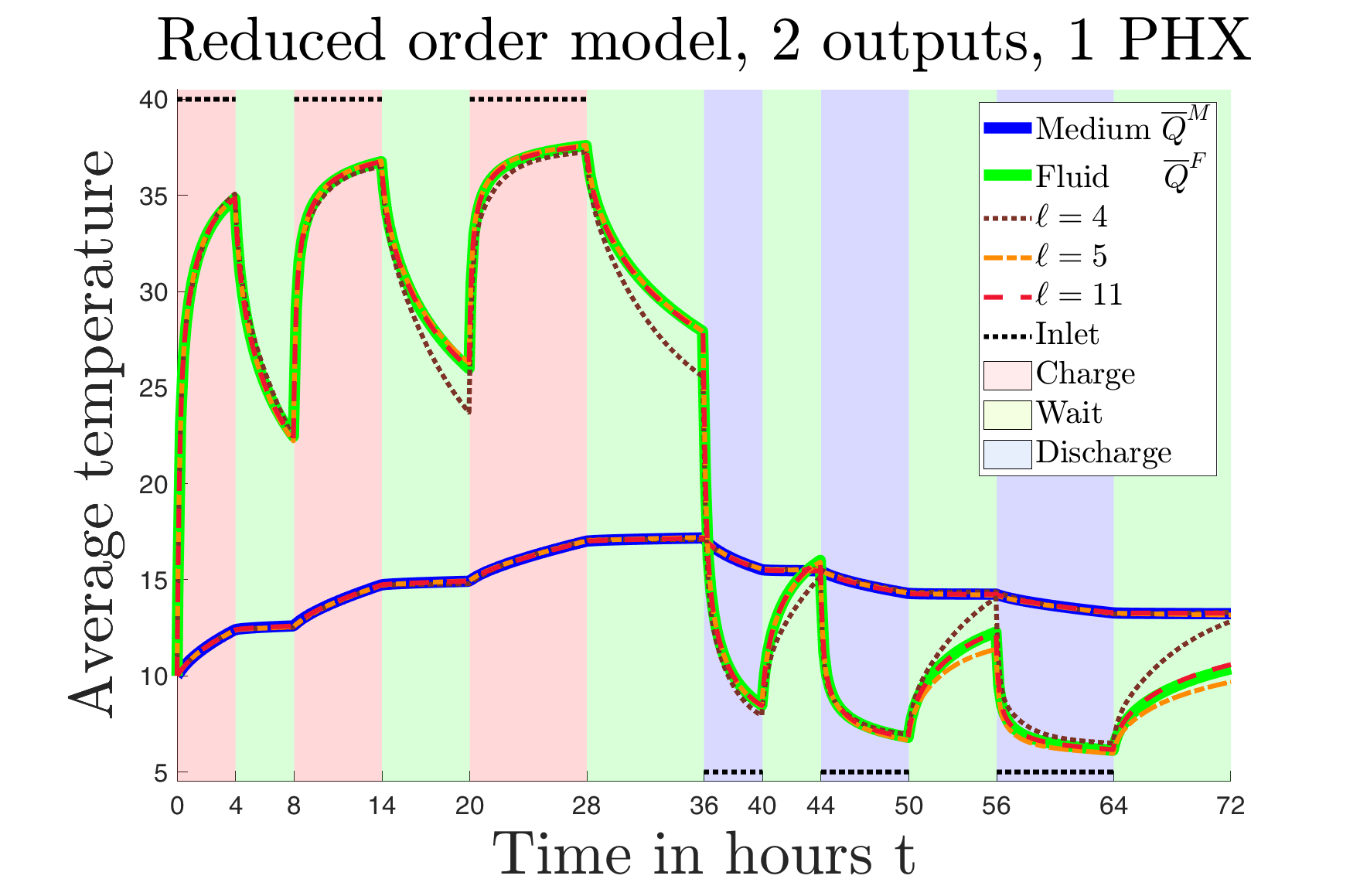}
			\includegraphics[width=.49\linewidth]{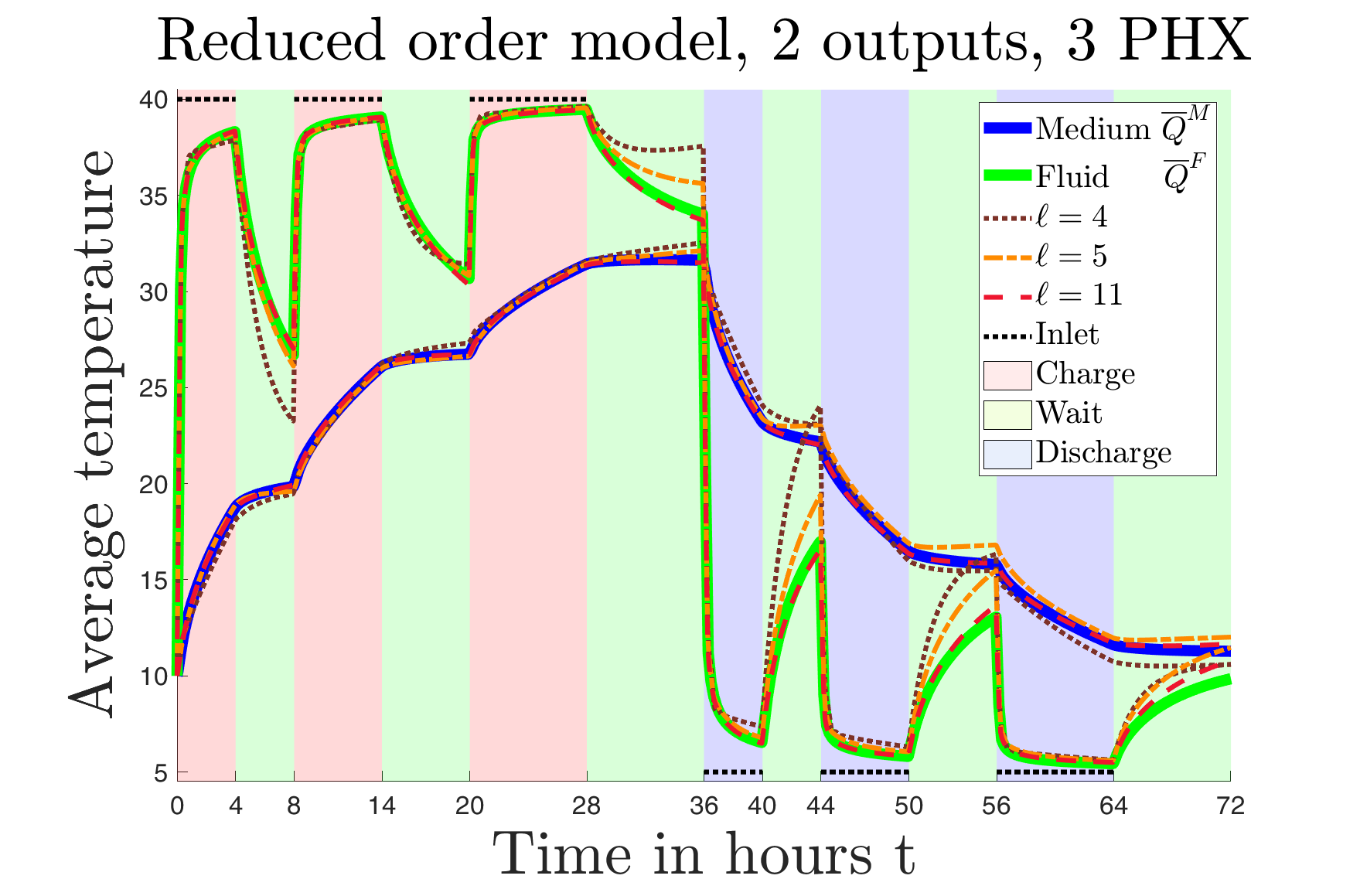}
			\mycaption{Model with two outputs $Z=(\Qm,\Qf)^\top$: \quad Approximation of the output for $\dimred=4,5,11$. \newline  Left: one \phx, Right three \phxsk. 				
			}
			\label{BT3Cmf2}
		\end{figure}

		Fig.~\ref{selection3Cmf} depicts 
		in the left panel the first 50 largest Hankel singular values whereas the right right panel shows the selection criteria (red  for 1 \phx and blue  for 3 \phxsk). For the first 50 singular values we observe for both models that they are all distinct and decrease by 9 orders of magnitude. As in the example with a single output   the first 20 singular values decrease faster for the model with one \phx than for the 3 \phx model. The selection criterion for the model with one \phx is for all $\dimred\ge 2$ larger than for 3 \phxsk. From Fig.~\ref{selection3Cmf} and also from  the minimal reduced orders reported in Table \ref{tab:ReducedOrders}  it can be seen  that a reduced-order system with $\dimred_{0.9}=4 $ states  can capture more than $90 \%$ of the output energy of the original system. For the level threshold $95 \%$ the one \phx model requires $\dimred_{0.95}=5$ states  while for the three \phx model $\dimred_{0.95}=6$ states are needed. In both cases the  level of $99 \%$ is exceeded for the first time for $\dimred_{0.99}=11$. Hence, for dimension $\ell \geq 11 $ an almost perfect approximation of the input-output behavior can be expected.

		\begin{figure}[h]
			\centering
			\includegraphics[width=0.49\linewidth]{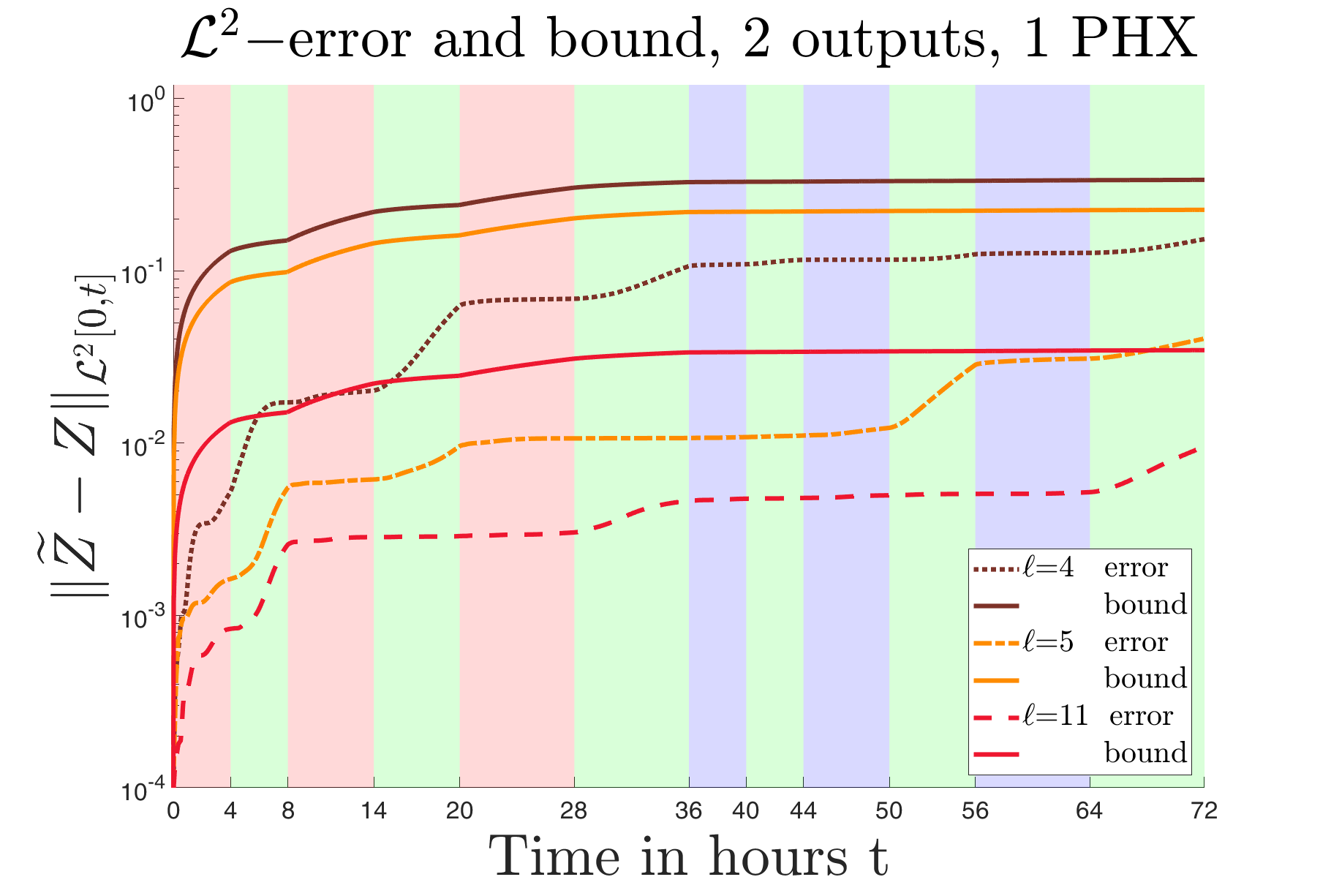}
			\includegraphics[width=0.49\linewidth]{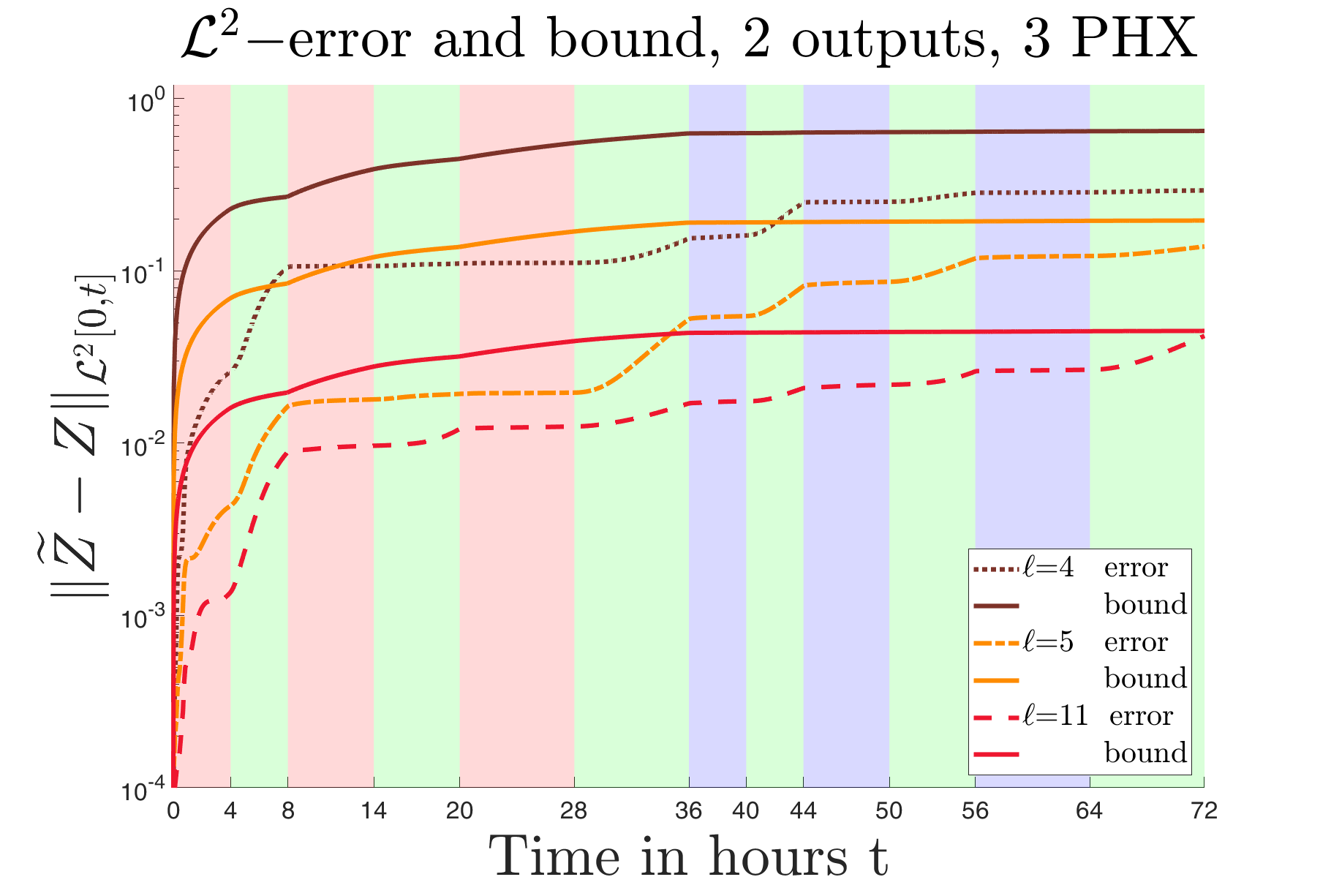}
			\caption{Model with two outputs $Z=(\Qm,\Qf)^\top$: \quad $\Ltwo$-error and error bound for $\dimred=4,5,11$.\newline  Left: one \phx, Right three \phxsk. }	
			\label{error3Cmf2}
		\end{figure}

		For the evaluation of the actual quality of the output approximation we plot in Fig.~\ref{BT3Cmf2} the output variables of the original and reduced-order system against time. The average temperatures $Z_1(t)=\Qm(t)$ and $Z_2(t)=\Qf(t)$ in the medium and fluid are drawn as solid  blue and green lines, respectively, and its approximations as brown, orange and red lines  for  $\dimred=4,5,11$. Further,  the inlet temperature $\Qin(t)$ during the charging an discharging periods is shown as black dotted line. The figures show that the approximation of $\Qm$ is better than for $\Qf$. A possible explanation is that $\Qm$ is an average of the spatial temperature distribution over the quite  large subdomain $\Dm$ (medium)  while for $\Qf$ the temperature is averaged only over the much smaller subdomain $\Df$ of the  \phx fluid.    Further, the temporal variations of $\Qf$ are much larger than those of $\Qm$ due to the impact of the changing inlet temperature during charging, discharging and waiting.   Errors are more pronounced during waiting periods than during charging and discharging. For the three \phx model the pointwise errors are slightly larger.  As noted above, for $\dimred=11$ the selection criterion exceeds $99 \%$  and now the  approximation errors are almost negligible. This was also observed for $\dimred>11$.		
		
		Fig.~\ref{error3Cmf2} plots for the reduced orders $\dimred$ considered above the $\Ltwo$-error  $\|Z-\widetilde{Z}\|_{\Ltwo (0,t)}$ against time $t$ together with the  error bounds from Theorem \ref{theo_errorbound}. This allows an alternative evaluation of the approximation quality.  As expected,  the error bounds and also the actual errors decrease with $\dimred$. While the error bounds increase more during the charging periods due to the larger norm of the input $g$ caused by the higher inlet temperature the actual error increase more during the waiting periods. This corresponds to the above observed larger errors in the output approximation in during these periods.
		
		\subsection{Three Aggregated Characteristics I: ~$\Qm,\Qf,\Qout$}			
		\label{subsec:num_ex3}
		
		This example extends the example considered in Sec.~\ref{subsec:num_ex2}  by adding  a third variable to the system output which is the average temperature at the  \phx outlet, i.e., we consider the  three-dimensional output  $Z=(\Qm,\Qf,\Qout)^\top$. 
		The outlet temperature is needed if the geothermal storage is embedded into a residential system. Then the management of  the heating system and  the interaction between the geothermal and the internal buffer storage rely on the knowledge of $\Qout$. Further, the difference $\Qin(t)-\Qout(t)$ between inlet and outlet temperature is the key quantity for the quantification of the  the amount of heat injected    to or withdrawn from the storage due to convection of the   fluid in the \phxk, we refer to Eq.~\eqref{Rout} and the explanations in  Subsec. \ref{subsec:AggCharBoundary}.
		
		\begin{figure}[h!]
			\centering 
			\includegraphics[width=0.49\linewidth]{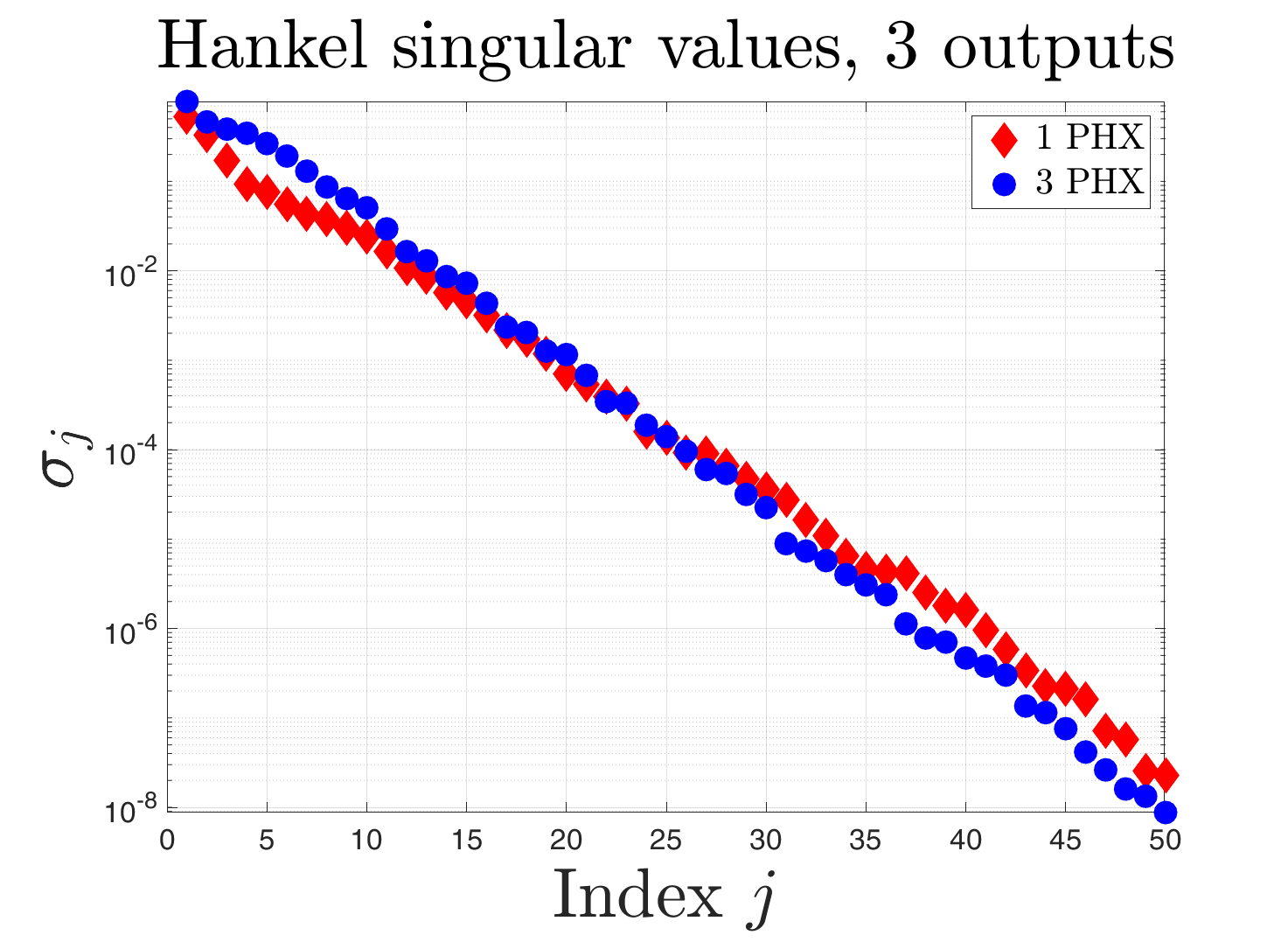}
			\includegraphics[width=0.49\linewidth]{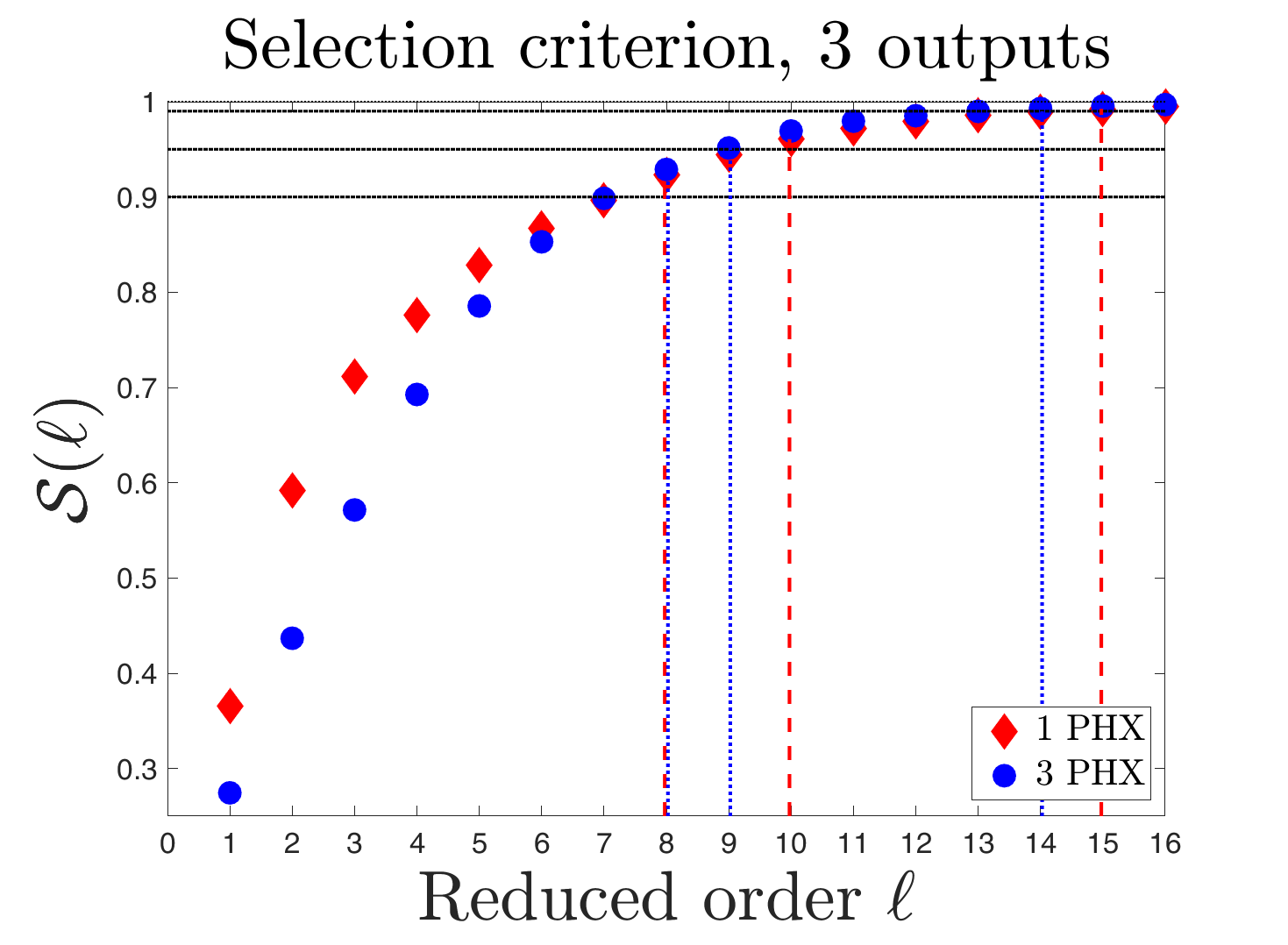}	
			\caption{Model with three outputs $Z=(\Qm,\Qf,\Qout)^\top$:\newline 
				Left: first 50 largest Hankel singular values, 
				Right: selection criterion.  }
			\label{selection3Cmo}
		\end{figure}

		The setting is analogous to Subsec.~\ref{subsec:num_ex2}. The input function $g$ is given in \eqref{ex:input} and the $3\times n$ output matrix $\omatrix$ is formed by the three rows $\OutputM,\OutputF,\OutputOut$ which are given in our companion paper \cite[Sec.~4]{takam2021shortb}.

		\begin{figure}[h!]
			\centering
			\includegraphics[width=.49\linewidth]{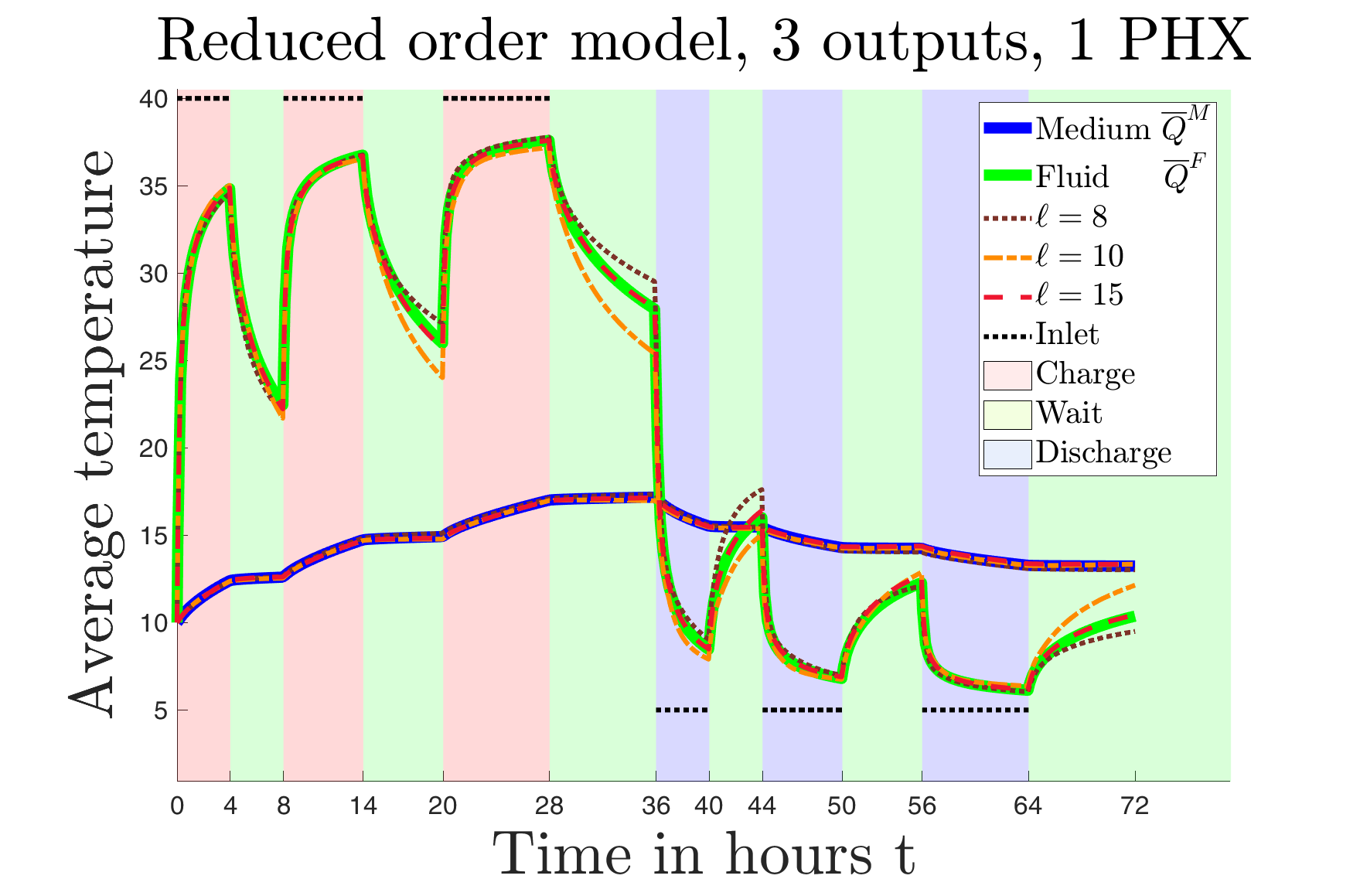}
			\includegraphics[width=.49\linewidth]{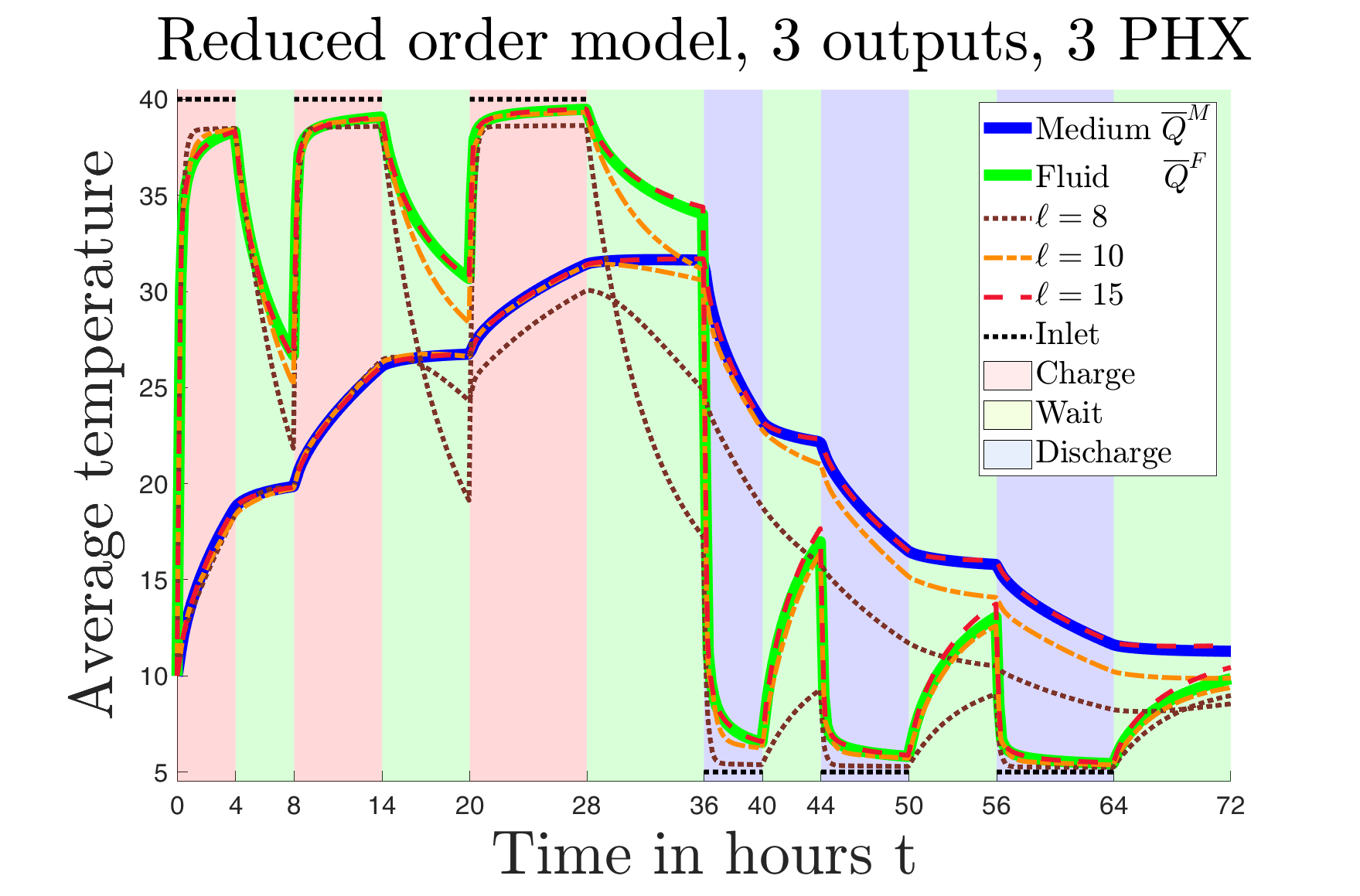}
			\includegraphics[width=.49\linewidth]{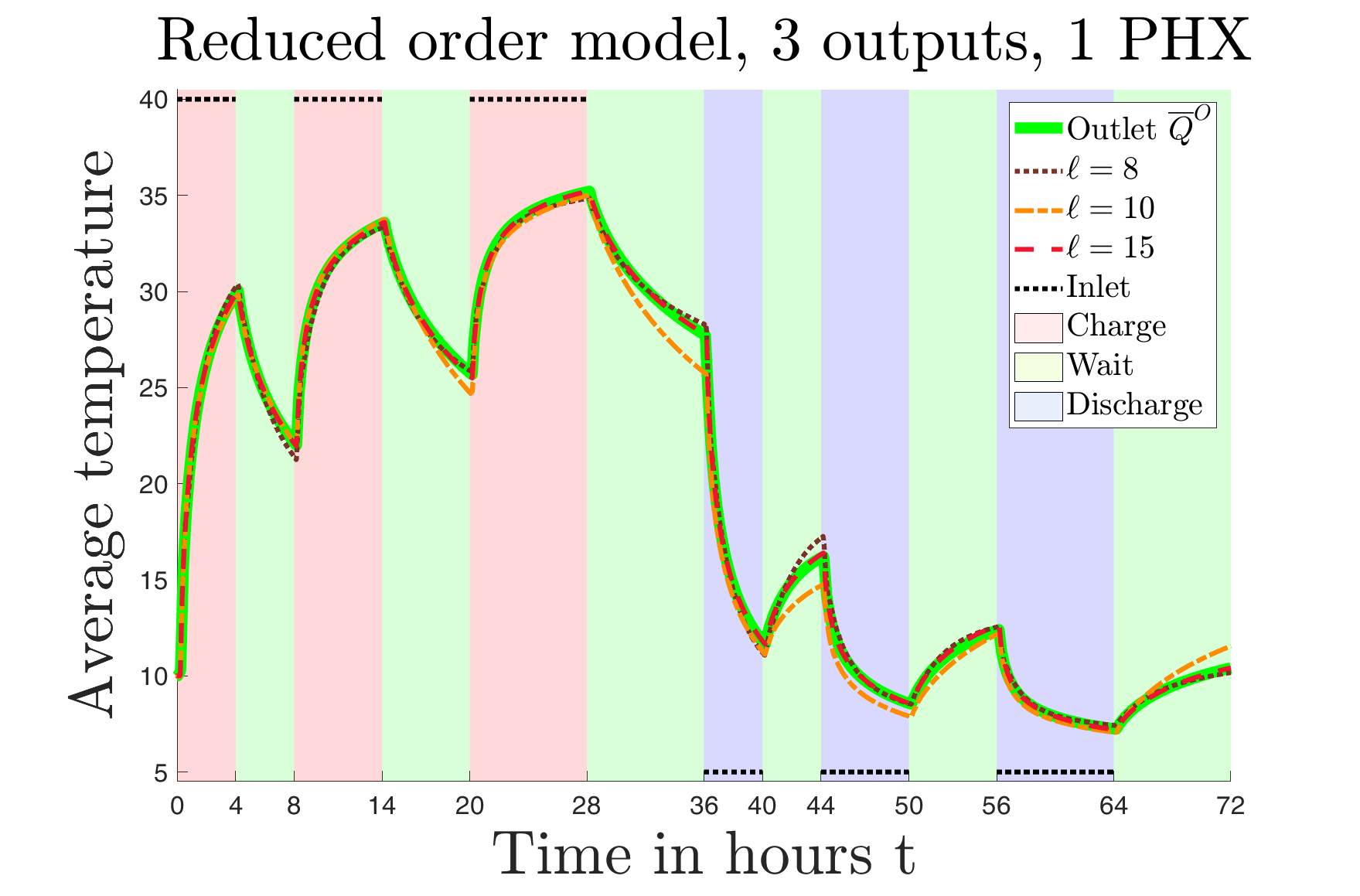}
			\includegraphics[width=.49\linewidth]{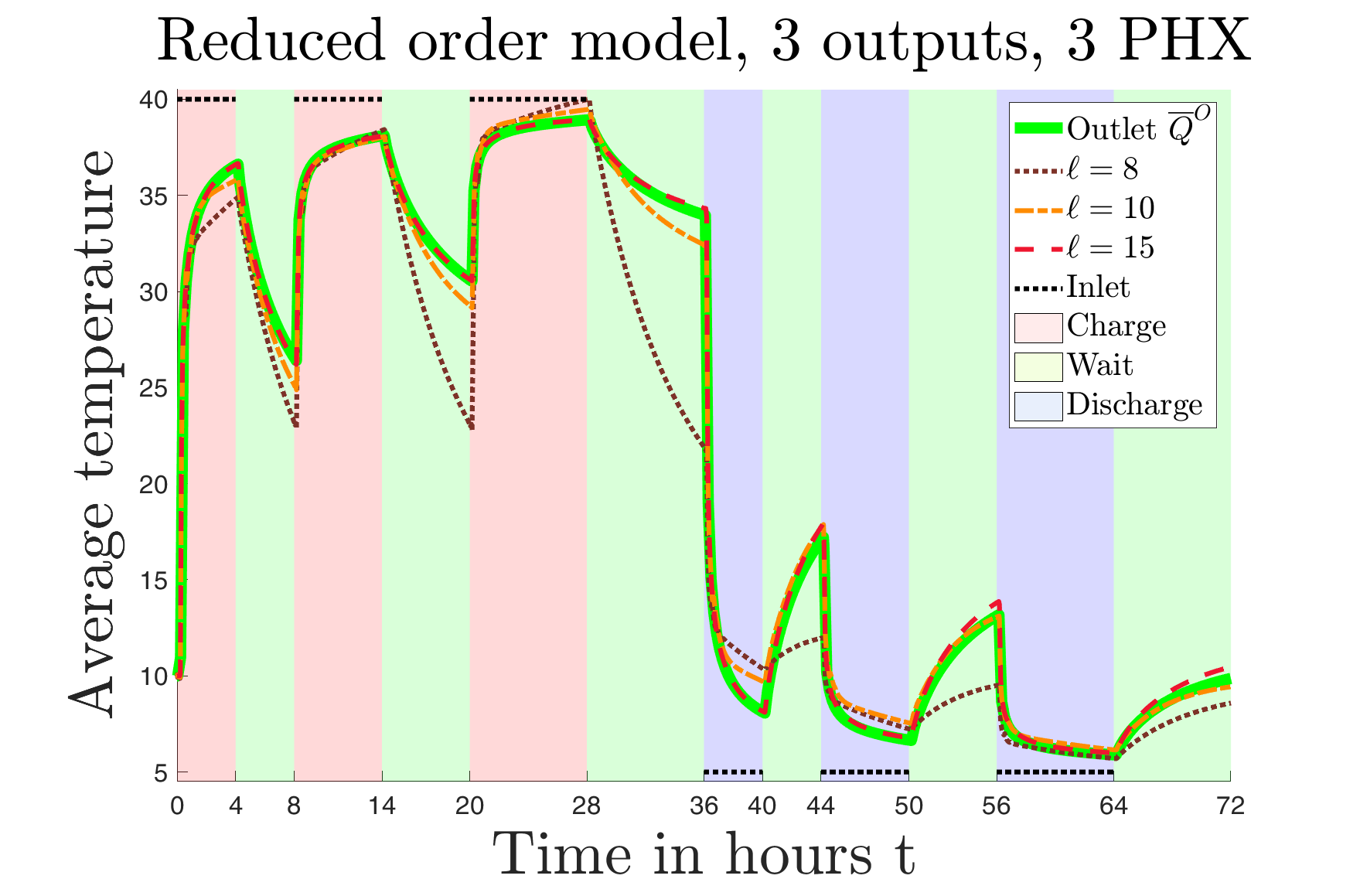}
			\mycaption{Model with three outputs $Z=(\Qm,\Qf,\Qout)^\top$: \quad Approximation of the output for $\dimred=8,10,15$. \newline  \begin{tabular}[t]{ll}
					Top: & Average temperatures in the medium $\Qm$ and the fluid $\Qf$,\\
					Bottom:&   Average temperature at the outlet $\Qout$,\\
					Left:&  one \phx, Right three \phxsk. 	
				\end{tabular}											
			}
			\label{BT3Cmo} 
		\end{figure}

		Fig.~\ref{selection3Cmo} shows as in the previous experiments 
		in the left panel the first 50 largest Hankel singular values whereas the right right panel shows the selection criteria. For the first 50 singular values we observe for both models that they are all distinct and decrease by 8 orders of magnitude which is slightly less than for the case of only two outputs. As in the examples with  one and two outputs  the first 20 singular values decrease faster for the model with one \phx than for the 3 \phx model. The selection criterion for the model with one \phx is for  $\dimred\le 6$ larger than for 3 \phxs and for $\dimred\ge 7$ slightly smaller.  Table \ref{tab:ReducedOrders} shows that for reaching threshold levels of $\alpha=90\%, 95\%, 99\%$ in the one \phx case $\dimred_\alpha=8,10,15$ states are required while for three \phxs one needs $\dimred_\alpha=8,9,14$ states, respectively.
		Thus, for dimension $\ell \geq 15 $ an almost perfect approximation of the input-output behavior can be expected.		
		Note that in the previous experiment with two outputs (without outlet temperature) reaching the above thresholds requires about 4 to 5 states less.
		
		In Fig.~\ref{BT3Cmo}  we plot  the output variables of the original and reduced-order system against time. The top panels show the average temperatures $Z_1(t)=\Qm(t)$ and $Z_2(t)=\Qf(t)$ in the medium and fluid which  are drawn as solid blue and green lines, respectively. The bottom panels depict the average temperature at the outlet $Z_3(t)=\Qout(t)$ by a solid  green line. The reduced-order approximations  are drawn  for  $\dimred=8,10,15$. As in the previous experiments with two outputs it can be observed that the approximation of $\Qm$ is better than for $\Qf$. The approximation errors for the outlet temperature $\Qout$ are quite similar to the errors for the fluid temperature. Note  that the outlet temperature represents  an average  of the spatial temperature distribution over the  quite small subdomain $\Dout$ on the boundary which is still smaller than the subdomain $\Df$ over which the average is taken for the fluid temperature $\Qf$. Both, the fluid and the outlet temperature show much larger temporal variations than  the temperature in medium $\Qm$. Again, errors are more pronounced during waiting periods than during charging and discharging and for the three \phx model the pointwise errors are larger than for the one \phx model.  For $\dimred\ge 15$ states the selection criterion is above  $99 \%$  and the  approximation errors are almost negligible.
		
		Fig.~\ref{BT3Cmo} also shows that the average temperatures of the fluid and at the outlet pipe, $\Qm$ and $\Qout$, exhibit almost the  same pattern during the charging, discharging and waiting periods. Hence, knowing the average fluid temperature one can simply predict the outlet temperature and remove   $\Qout$ from the output variables. Then we are back in the setting of the two output experiment in Subsec.~\ref{subsec:num_ex2} and need 4 to 5 states  less to capture the input-output behaviour with the same approximation quality.  Below in Subsec.~\ref{subsec:num_ex3b} we consider a model where instead of removing $\Qout$ from the output this quantity is replaced by the average bottom temperature $\Qbottom$ leading again to a model with three outputs.
		
		An alternative evaluation of the approximation quality can be derived from  Fig.~\ref{error3Cmo} which plots for the reduced orders $\dimred$ considered above the $\Ltwo$-error  $\|Z-\widetilde{Z}\|_{\Ltwo(0,t)}$ against time $t$ together with the  error bounds from Theorem \ref{theo_errorbound}. The results are similar to Fig.~\ref{error3Cmf2} and we refer for the interpretation to the end of the previous subsection.

		\begin{figure}[!h]
			\centering
			\includegraphics[width=.49\linewidth]{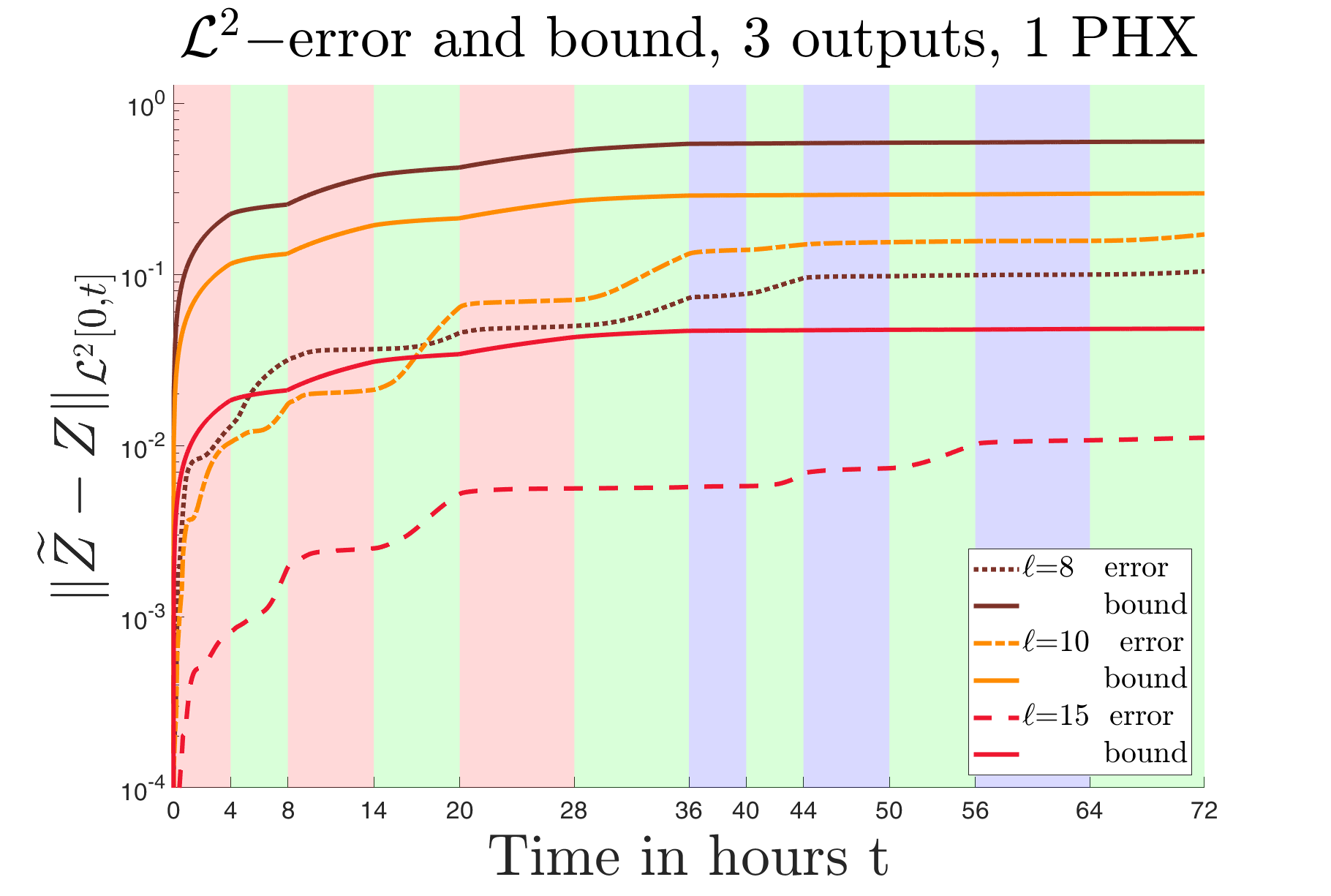}
			\includegraphics[width=.49\linewidth]{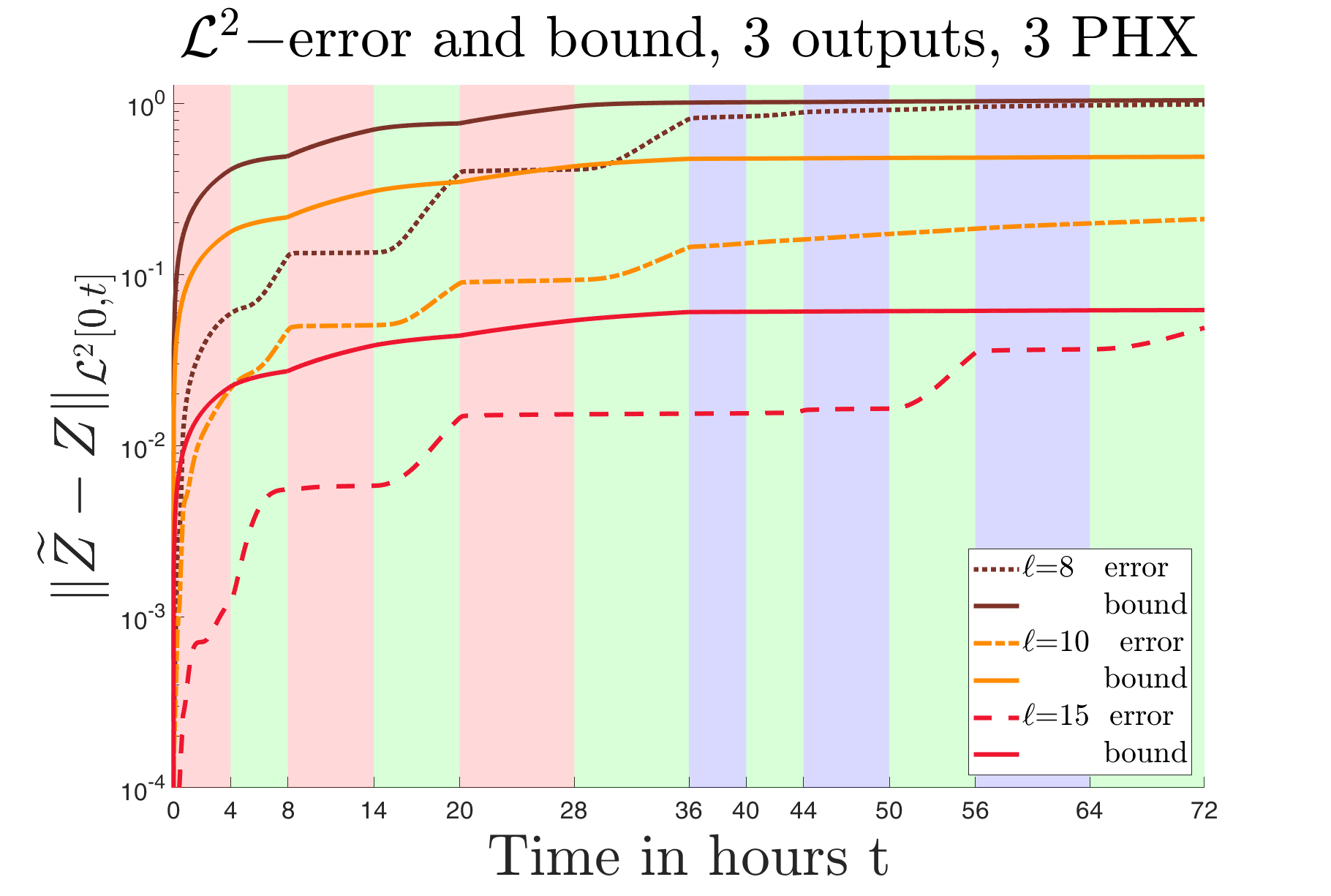}
			
			\caption{Model with three outputs $Z=(\Qm,\Qf,\Qout)^\top$:  $\Ltwo$-error and error bound for~$\dimred=8,10,15$.\newline  Left: one \phx, Right three \phxsk. 	}	
			\label{error3Cmo}
		\end{figure}	
		
		\subsection{Three Aggregated Characteristics II: ~$\Qm,\Qf,\Qbottom$}			
		\label{subsec:num_ex3b}
		As already announced above in this experiment we again consider a model with three outputs but instead of the outlet temperature $\Qout$ now the third output is  the average temperature at the bottom boundary $\Qbottom$. Hence, the output is  $Z=(\Qm,\Qf,\Qbottom)^\top$. We recall that the bottom boundary is open and not insulated and the temperature  $\Qbottom$  is of crucial importance for the quantification of gains and losses of thermal energy resulting from the heat transfer to the underground of the geothermal storage. We refer to Eq.~\eqref{RB} and the explanations in  Subsec. \ref{subsec:AggCharBoundary} and the Robin boundary condition \eqref{Robin} modeling that heat transfer from the storage to the underground.

		\begin{figure}[h!]
			\centering 
			\includegraphics[width=0.49\linewidth]{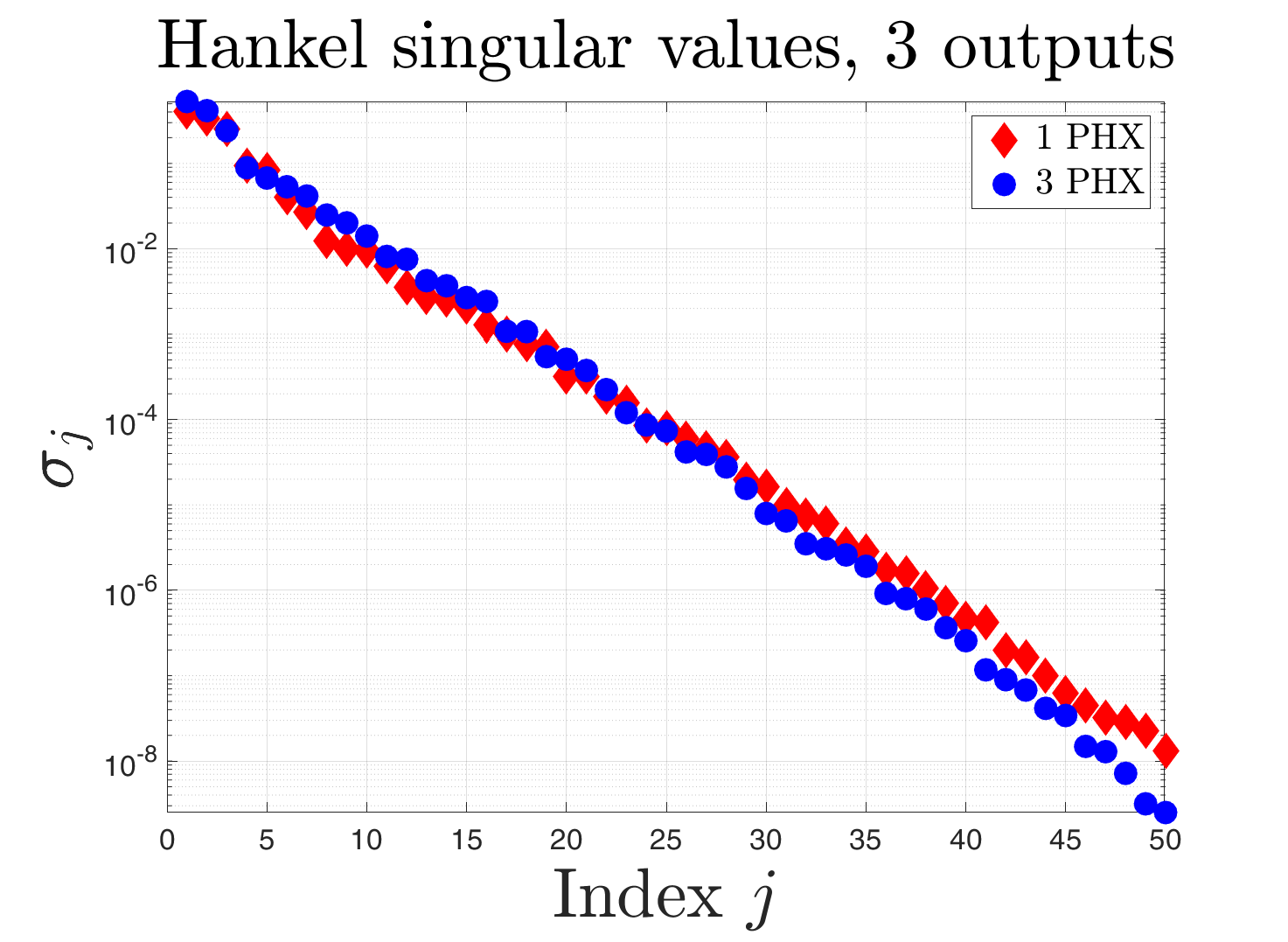}
			\includegraphics[width=0.49\linewidth]{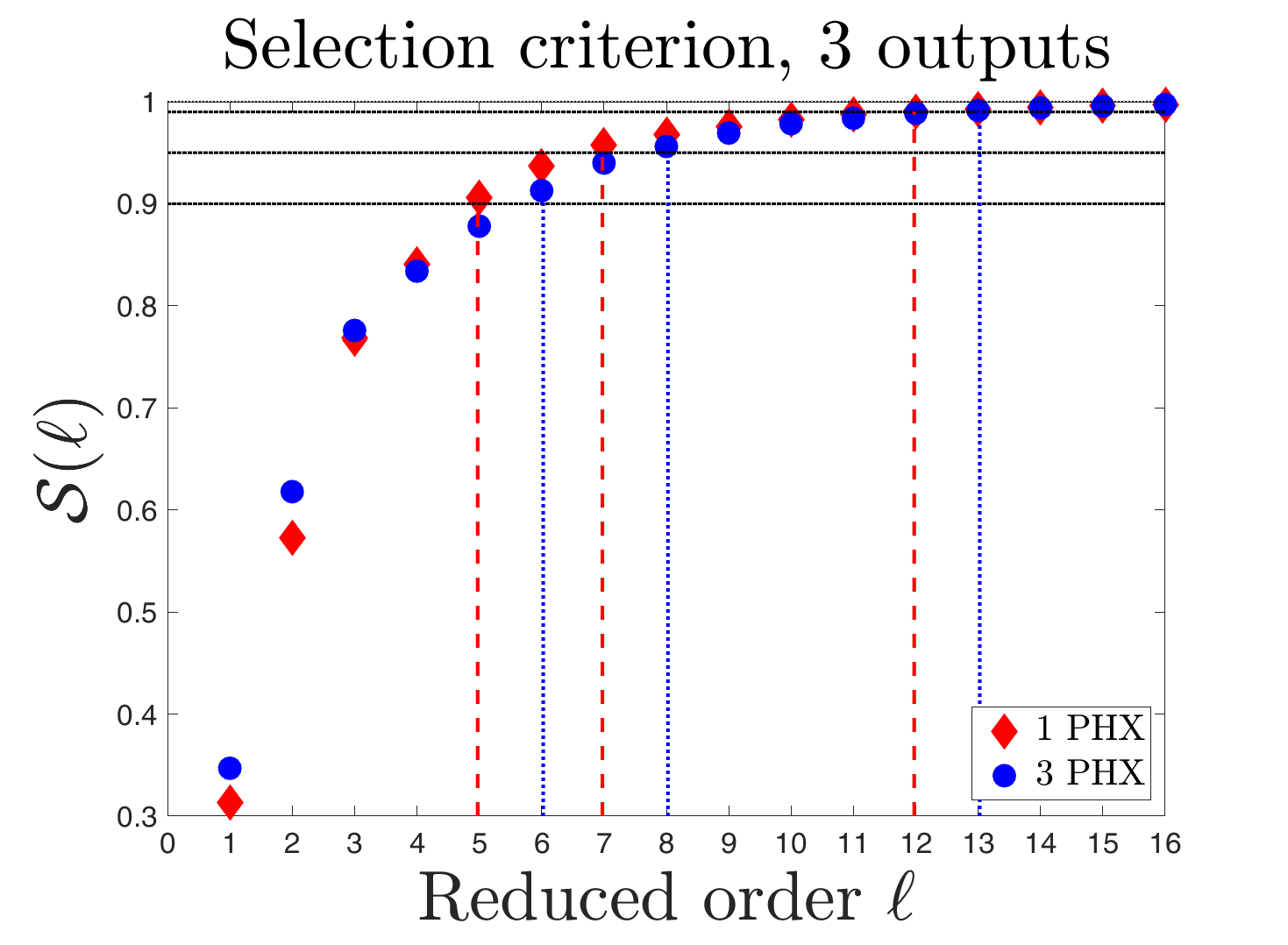}	
			\caption{Model with three outputs $Z=(\Qm,\Qf,\Qbottom)^\top$:\newline 
				Left: first 50 largest Hankel singular values, 
				Right: selection criterion.  }
			\label{selection3Cmb}
		\end{figure}
		
		The setting is analogous to Subsec.~\ref{subsec:num_ex2}. The input function $g$ is given in \eqref{ex:input} and the $3\times n$ output matrix $\omatrix$ is formed by the four rows $\OutputM,\OutputF,\OutputBottom$ which are given in our companion paper \cite[Sec.~4]{takam2021shortb}.
		
		Fig.~\ref{selection3Cmb} shows
		in the left panel the first 50 largest Hankel singular values whereas the right right panel shows the selection criteria. For  both \phx models  the first 50 singular values  are distinct and decrease by  more than 8 orders of magnitude which is only slightly less than for the case of two outputs. As in the previous experiments the first 20 singular values decrease faster for the model with one \phx than for the 3 \phx model.  The selection criterion for the model with one \phx is for  $\dimred\le 3$ smaller than for 3 \phxs and for $\dimred\ge 4$ slightly larger. 
		From the figure and also from Table \ref{tab:ReducedOrders} it can be seen that for reaching threshold levels of $\alpha=90\%, 95\%, 99\%$ in the one \phx case $\dimred_\alpha=5,7,12$ states are required while for three \phxs one needs $\dimred_\alpha=6,8,13$ states, respectively.
		Thus, for dimension $\ell \geq 13 $ an almost perfect approximation of the input-output behavior can be expected.	
		
		A comparison with the two-output model in Subsec.~\ref{subsec:num_ex2} with output  $Z=(\Qm,\Qf)^\top$ shows that the additional third output variable $\Qbottom$ requires only one or two more state variables to ensure the same approximation quality.	However, the three-output model considered above in Subsec.~\ref{subsec:num_ex3} where the third output is the average  outlet temperature $\Qout$ requires two  or three states more in the reduced-order system to ensure the same approximation quality. This shows that  $\Qbottom$ is much easier to reconstruct by a reduced-order model than $\Qout$. 
		An explanation is that 
		for $\Qout$ the spatial temperature distribution is averaged over the subdomain $\Dout$ which is much smaller than the corresponding domain $\Dbottom$ over which the average is taken for $\Qbottom$. Further, due to charging and discharging  via the \phxs the outlet temperature shows much larger temporal variations than  the temperature at the bottom $\Qm$.
		
		\begin{figure}[h!]
			\centering
			\includegraphics[width=.49\linewidth]{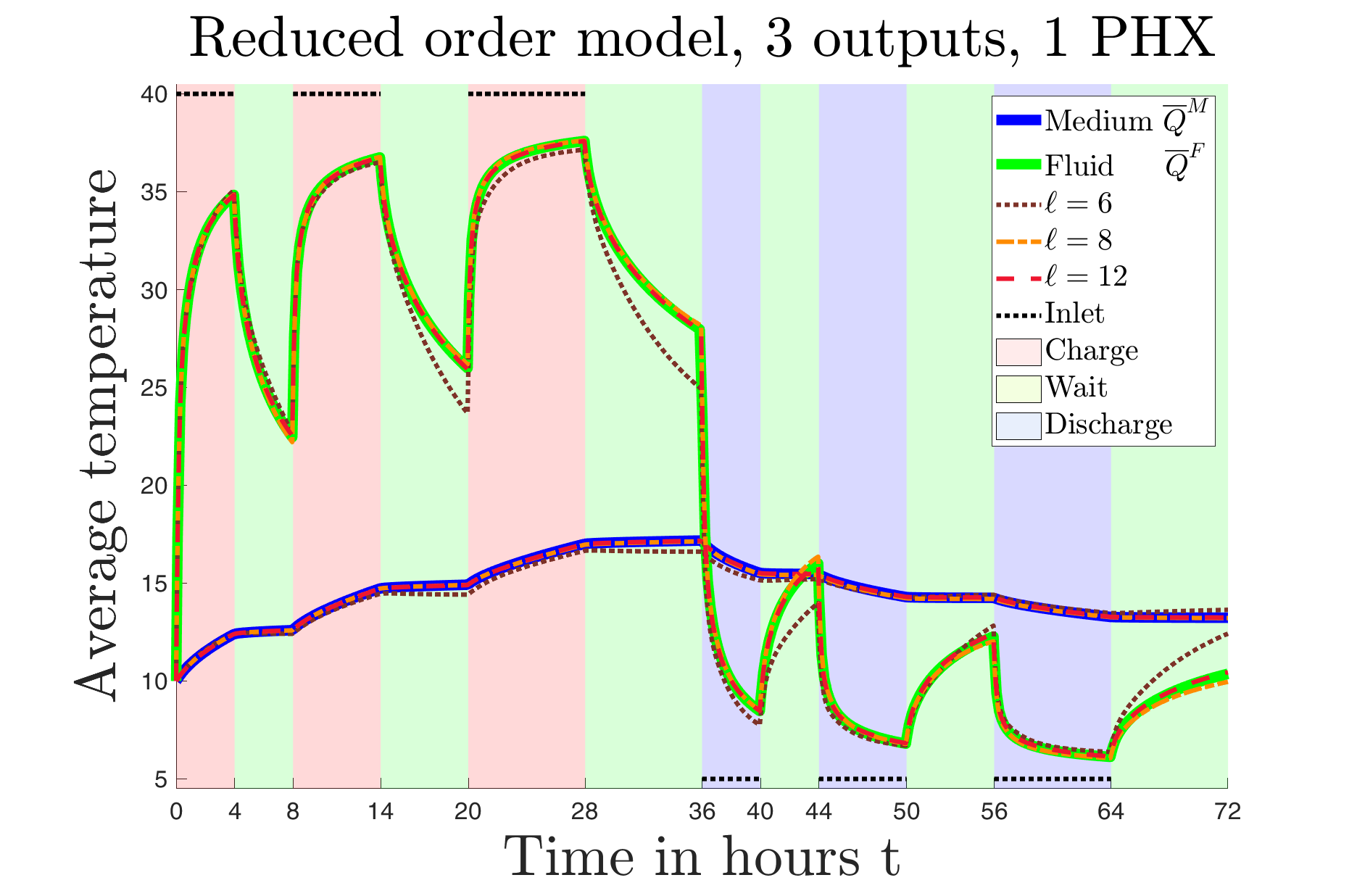}
			\includegraphics[width=.49\linewidth]{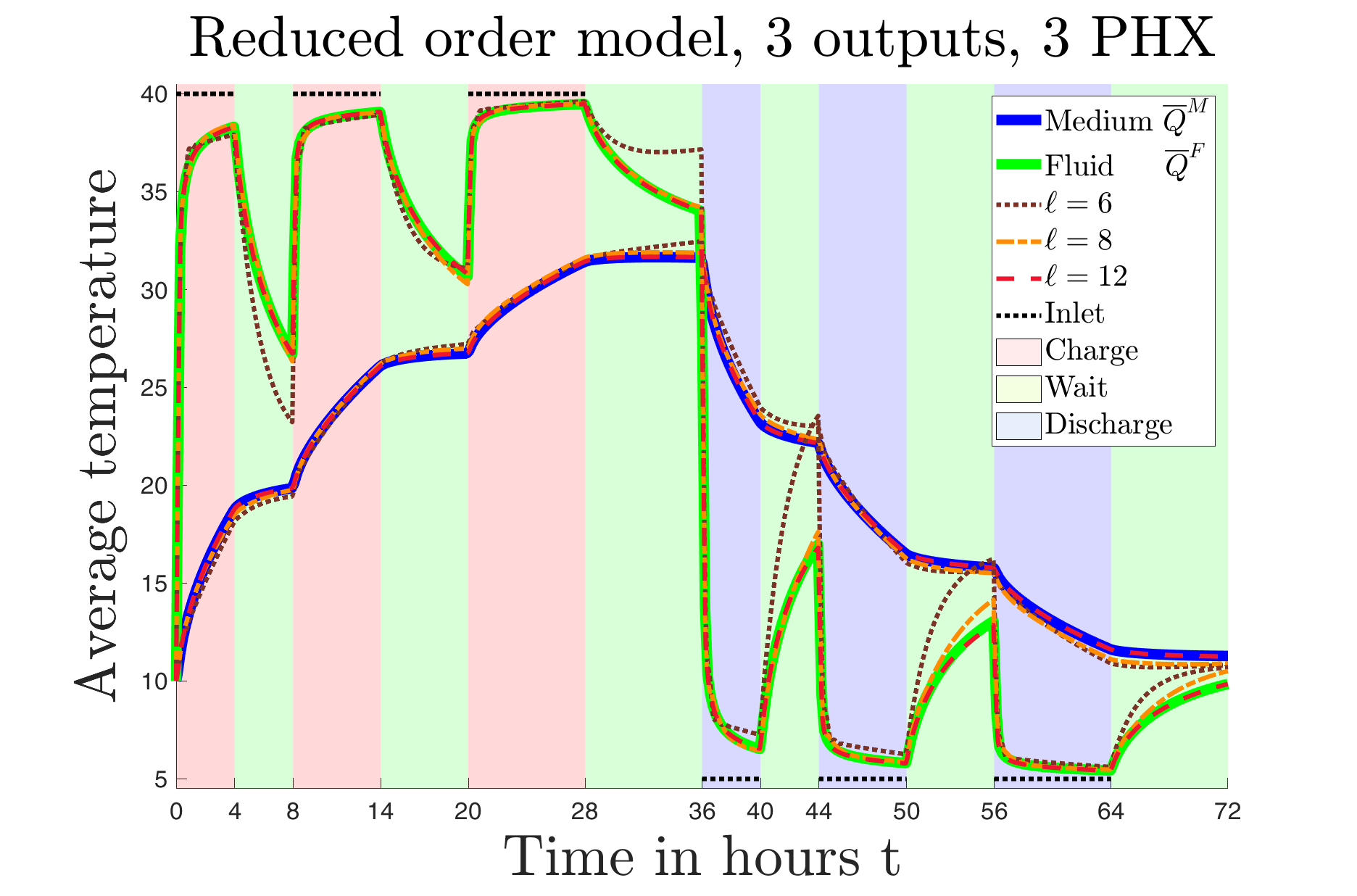}
			\includegraphics[width=.49\linewidth]{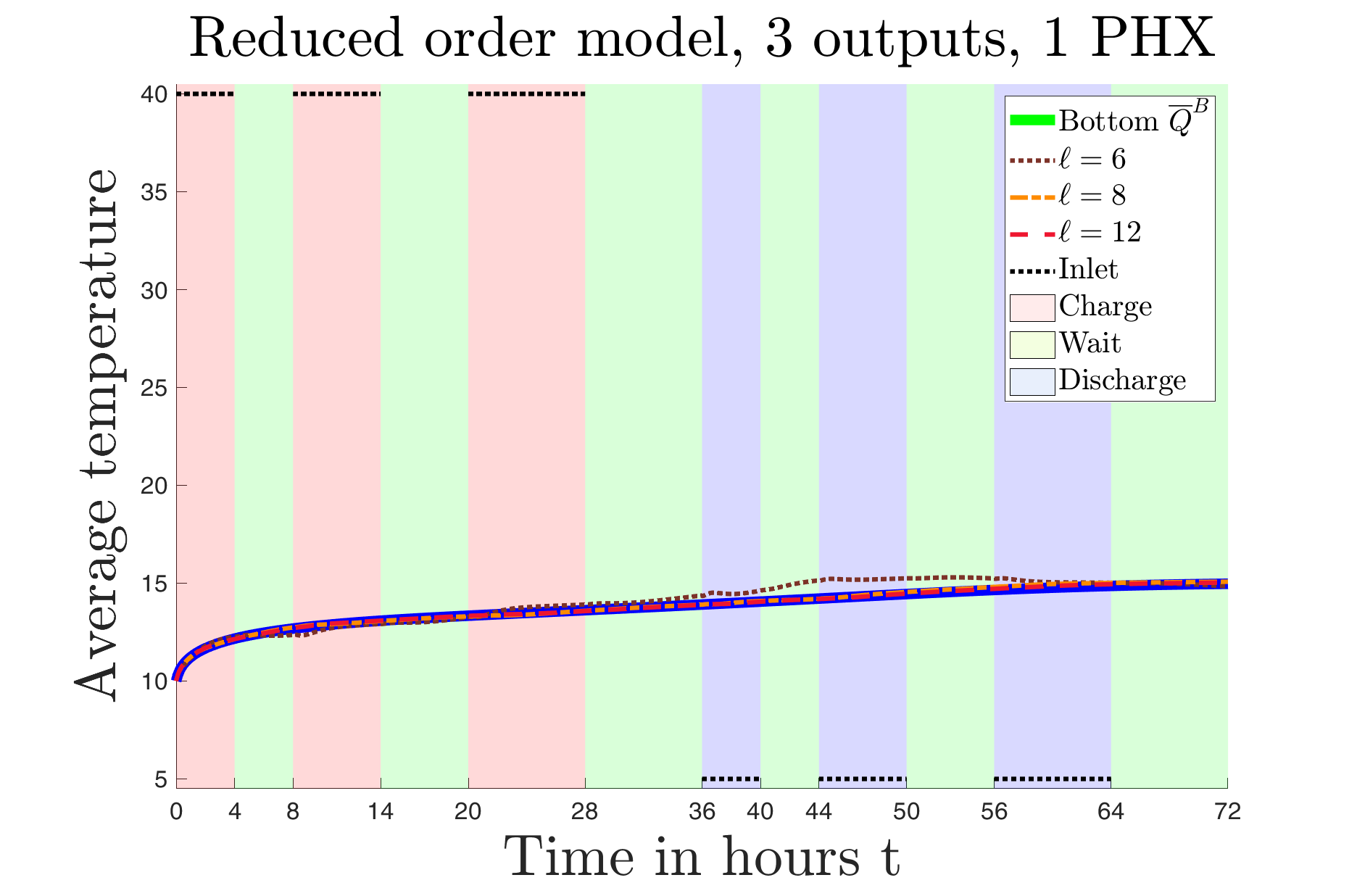}
			\includegraphics[width=.49\linewidth]{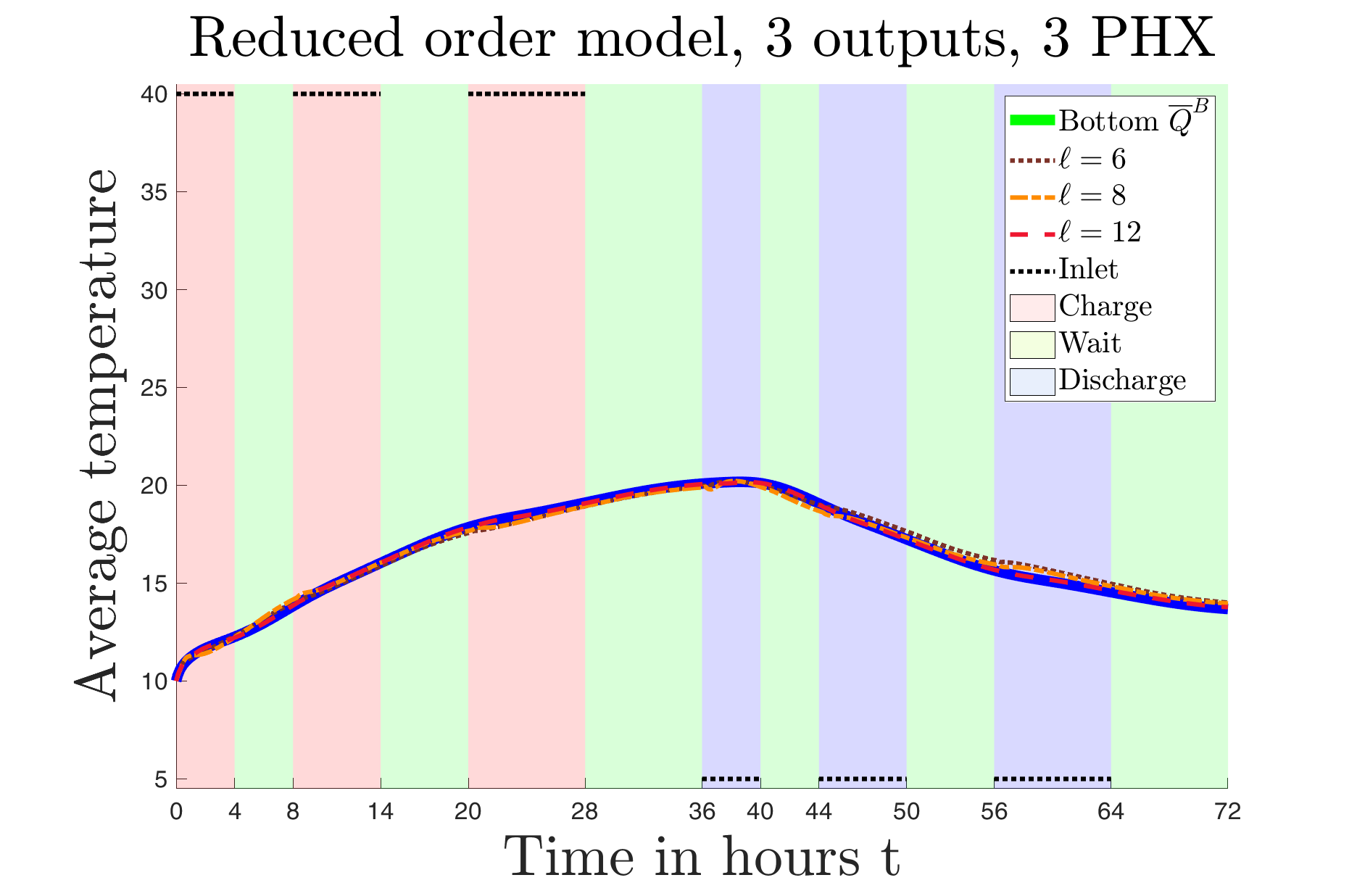}
			\mycaption{Model with three outputs $Z=(\Qm,\Qf,\Qbottom)^\top$: \quad Approximation of the output for $\dimred=6,8,12$. \newline  \begin{tabular}[t]{ll}
					Top: & Average temperatures in the medium $\Qm$ and the fluid $\Qf$,\\
					Bottom:&   Average bottom temperature  $\Qbottom$,\\
					Left:&  one \phx, Right three \phxsk. 	
				\end{tabular}											
			}
			\label{BT3Cmb}
		\end{figure}
		
		Fig.~\ref{BT3Cmb}  shows the output variables of the original and reduced-order system which are plotted against  time. In the top panels the average temperatures $Z_1(t)=\Qm(t)$ and $Z_2(t)=\Qf(t)$ in the medium and fluid are drawn as solid blue and green lines, respectively. The bottom panel depicts the average temperature at the bottom boundary $\Qbottom$ by a blue solid line. The reduced-order approximations  as drawn   for  $\dimred=6,8,12$.

		As in the previous experiments the approximation of $\Qm$ is much better than for $\Qf$. The approximation of the third output variable $\Qbottom$ is quite good although it represents an average of the spatial temperature distribution over the rather small subdomain $\Dbottom$ at the bottom boundary. Possible explanations are the relatively small temporal fluctuations of that quantity and the large distance of the bottom boundary to the \phxs where the charging and discharging generates large temporal and spatial fluctuations.
		
		In   Fig.~\ref{error3Cmb} we show  for the reduced orders $\dimred$ considered above the $\Ltwo$-error  $\|Z-\widetilde{Z}\|_{\Ltwo(0,t)}$ which plotted against time $t$ together with the  error bounds from Theorem \ref{theo_errorbound}. The results are similar to Fig.~\ref{error3Cmf2} and we refer for the interpretation to the end of  Subsec.~\ref{subsec:num_ex2}.Fig.~\ref{error3Cmb}: 
		\begin{figure}[!h]
			\centering
			\includegraphics[width=.49\linewidth]{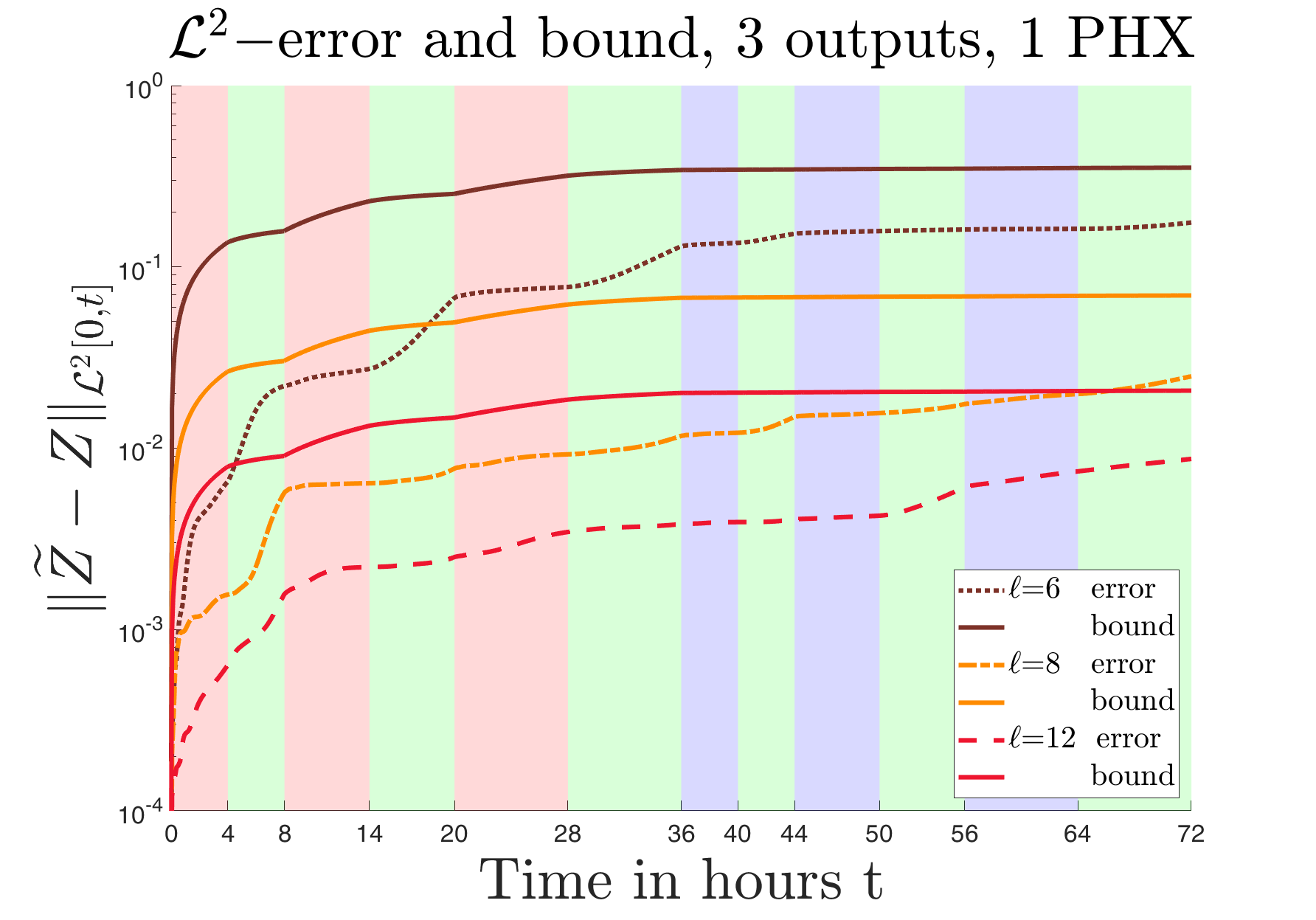}
			\includegraphics[width=.49\linewidth]{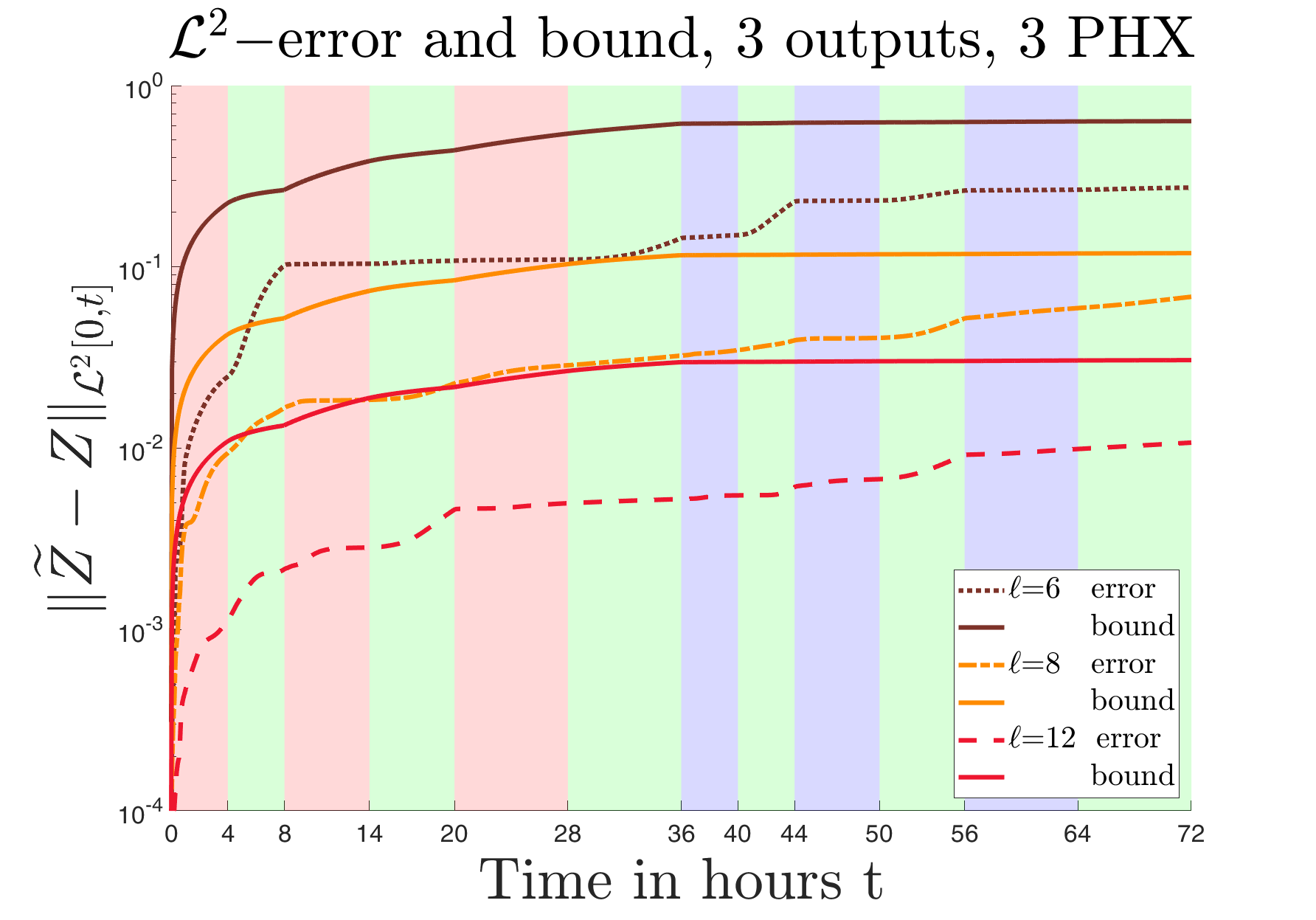}
			
			\caption{Model with three outputs $Z=(\Qm,\Qf,\Qbottom)^\top$:  $\Ltwo$-error and error bound for~$\dimred=6,8,12$.\newline  Left: one \phx, Right three \phxsk. 	}	
			\label{error3Cmb}
		\end{figure}								
		
		\subsection{Four Aggregated Characteristics: ~$\Qm,\Qf,\Qout,\Qbottom$}
		\label{subsec:num_ex4}								
		
		In this last experiment the output contains all of four  aggregated characteristics appearing in the above experiments and is given by   $Z=(\Qm,\Qf,\Qout,\Qbottom)^\top$.

		The setting is analogous to Subsec.~\ref{subsec:num_ex2}. The input function $g$ is given in \eqref{ex:input} and the $4\times n$ output matrix $\omatrix$ is formed by the four rows $\OutputM,\OutputF,\OutputOut,\OutputBottom$ which are given in our companion paper \cite[Sec.~4]{takam2021shortb}.
		
		\begin{figure}[h!]
			\centering 
			\includegraphics[width=0.49\linewidth]{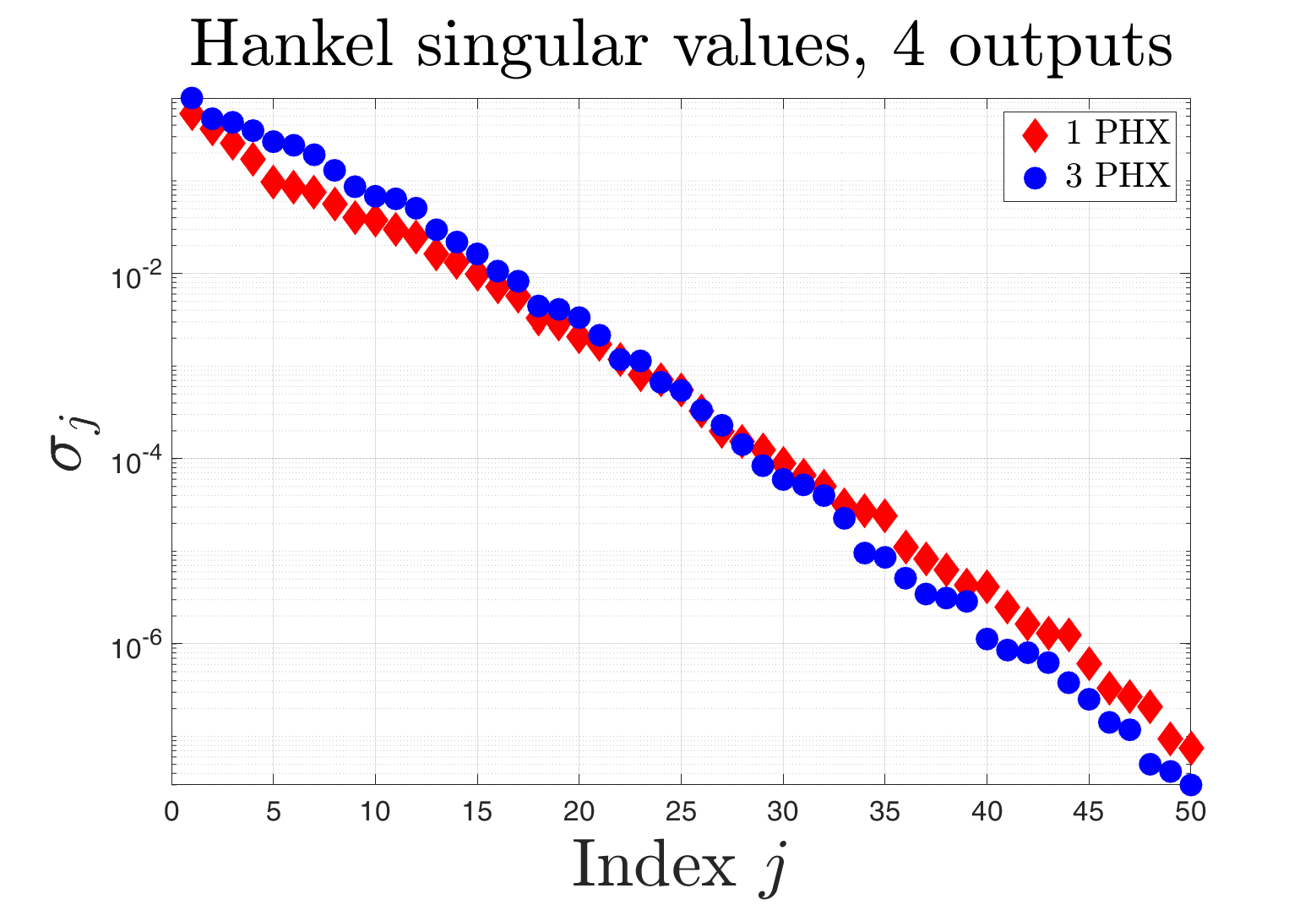}
			\includegraphics[width=0.49\linewidth]{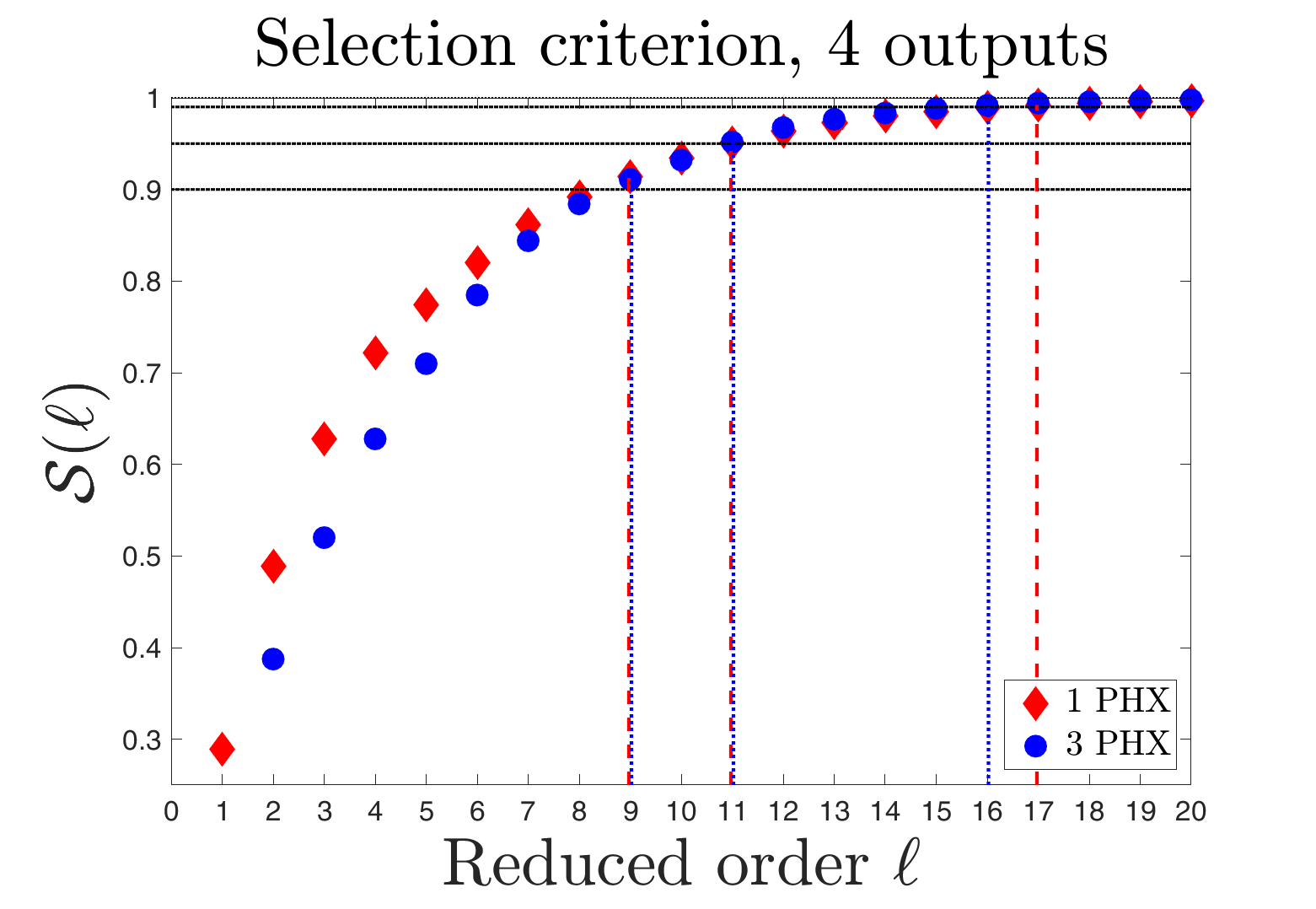}	
			\caption{Model with four outputs $Z=(\Qm,\Qf,\Qout,\Qbottom)^\top$:\newline 
				Left: first 50 largest Hankel singular values, 
				Right: selection criterion.  }
			\label{selection3Cmfob}
		\end{figure}
		
		Fig.~\ref{selection3Cmfob} shows 
		in the left panel the first 50 largest Hankel singular values whereas the right right panel shows the selection criteria. For  both \phx models  the first 50 singular values   are distinct and decrease by almost 8 orders of magnitude which is only slightly less than for the case of three outputs. As in the previous experiments the first 20 singular values decrease faster for the model with one \phx than for the 3 \phx model.  The selection criterion for the model with one \phx is for  $\dimred\le 10$ larger than for 3 \phxs and for $\dimred\ge 11$ slightly smaller. 						 
		From the figure and also from Table \ref{tab:ReducedOrders} it can be seen that for reaching threshold levels of $\alpha=90\%, 95\%, 99\%$ in the one \phx case $\dimred_\alpha=9,11,17$ states are required while for three \phxs one needs $\dimred_\alpha=9,11,16$ states, respectively.
		Thus, for dimension $\ell \geq 17 $ an almost perfect approximation of the input-output behavior can be expected.	A comparison with the three output model in Subsec.~\ref{subsec:num_ex3} with output  $Z=(\Qm,\Qf,\Qout)^\top$ shows that the additional fourth output variable $\Qbottom$ requires only one or two more state variables to ensure the same approximation quality.	This corresponds to our previous observations for the augmentation of the  output $Z=(\Qm,\Qf)^\top$ of the model considered in Subsec.~\ref{subsec:num_ex2} by adding as third output the average bottom  temperature  $\Qbottom$, see Subsec.~\ref{subsec:num_ex3b}. There the minimal reduced orders $\dimred_\alpha$ also increase  only by one ore two.

		\begin{figure}[h!]
				\centering							 		
				\includegraphics[width=.49\linewidth]{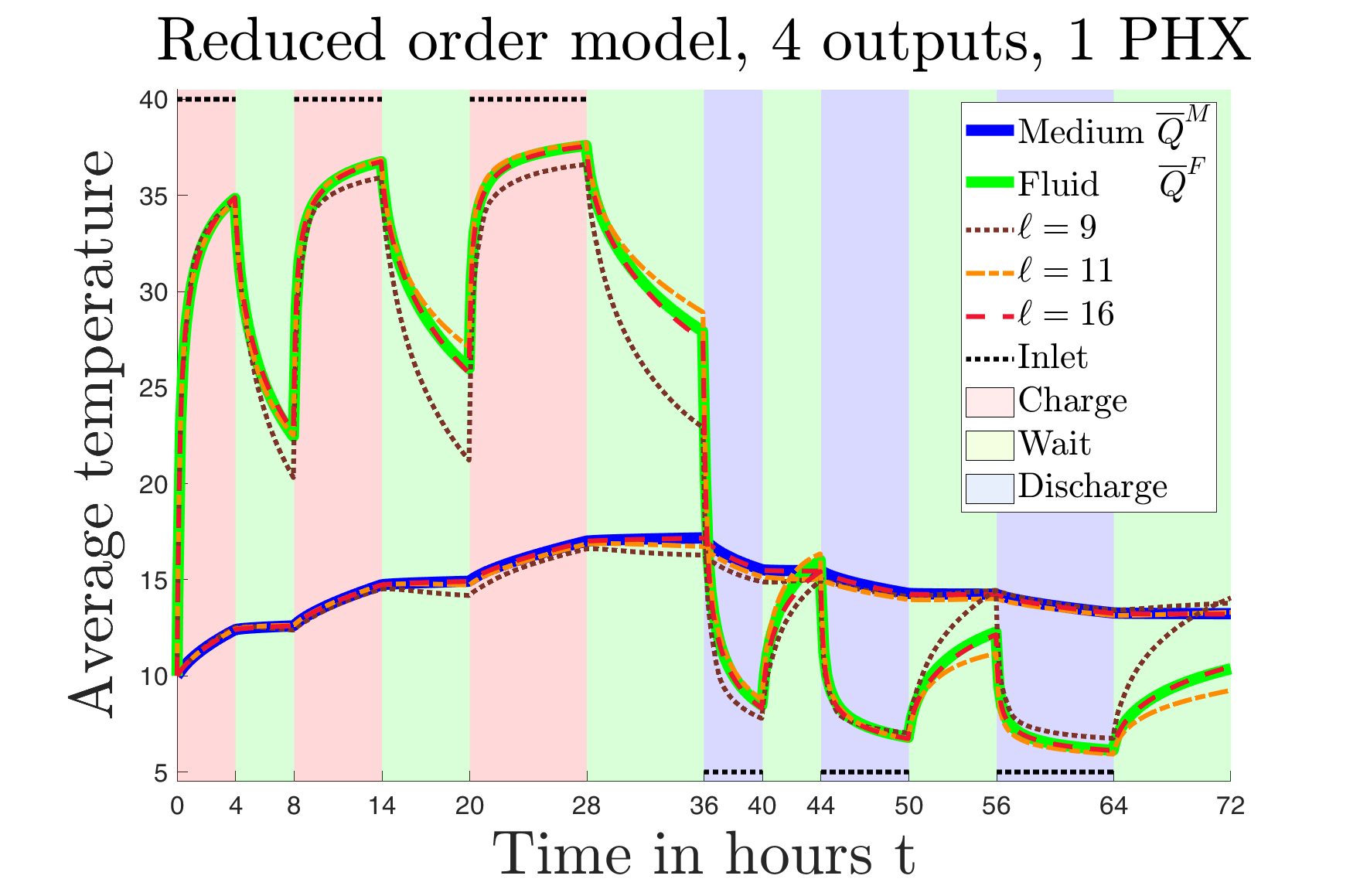}
				\includegraphics[width=.49\linewidth]{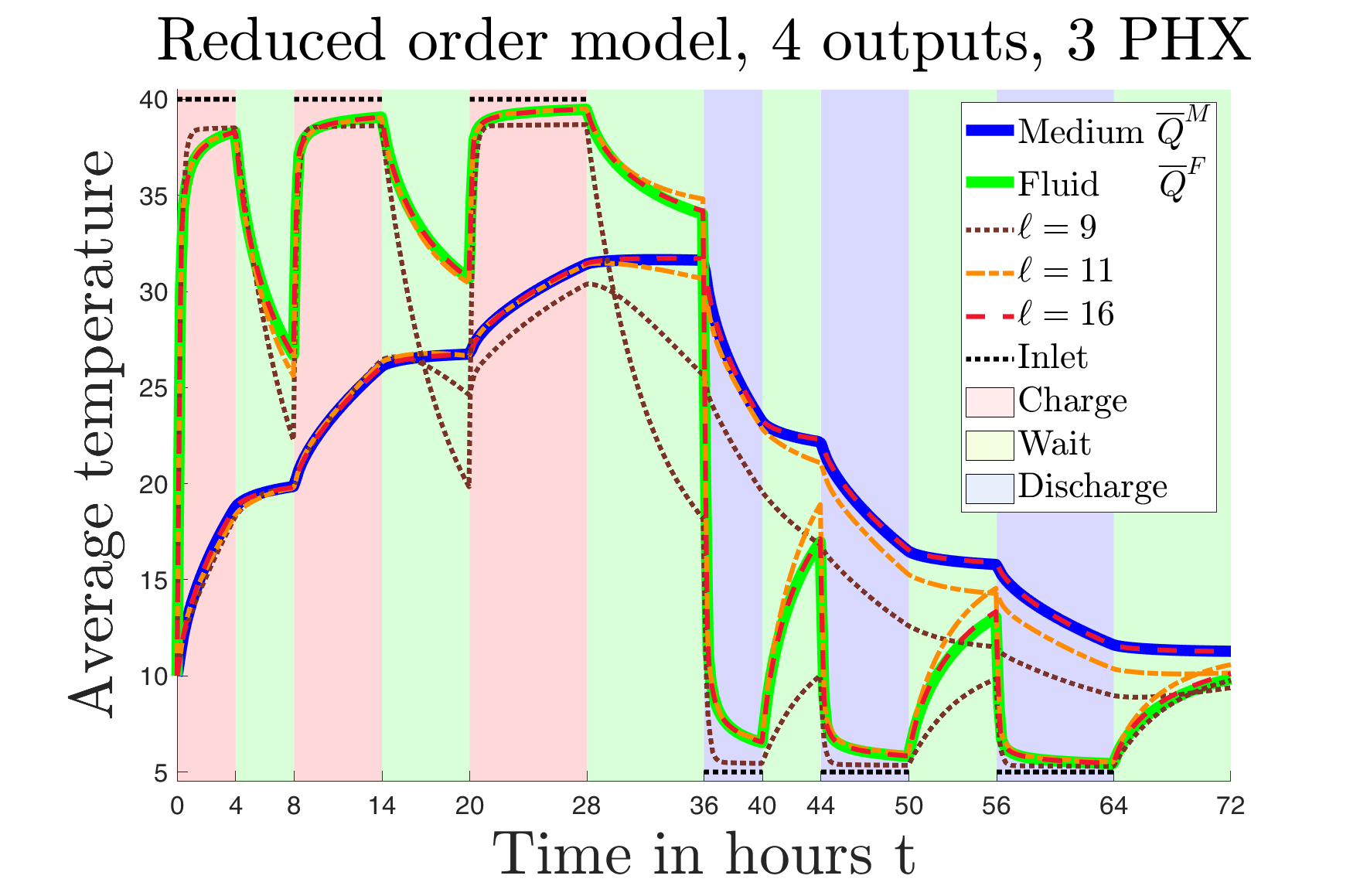}
				\includegraphics[width=.49\linewidth]{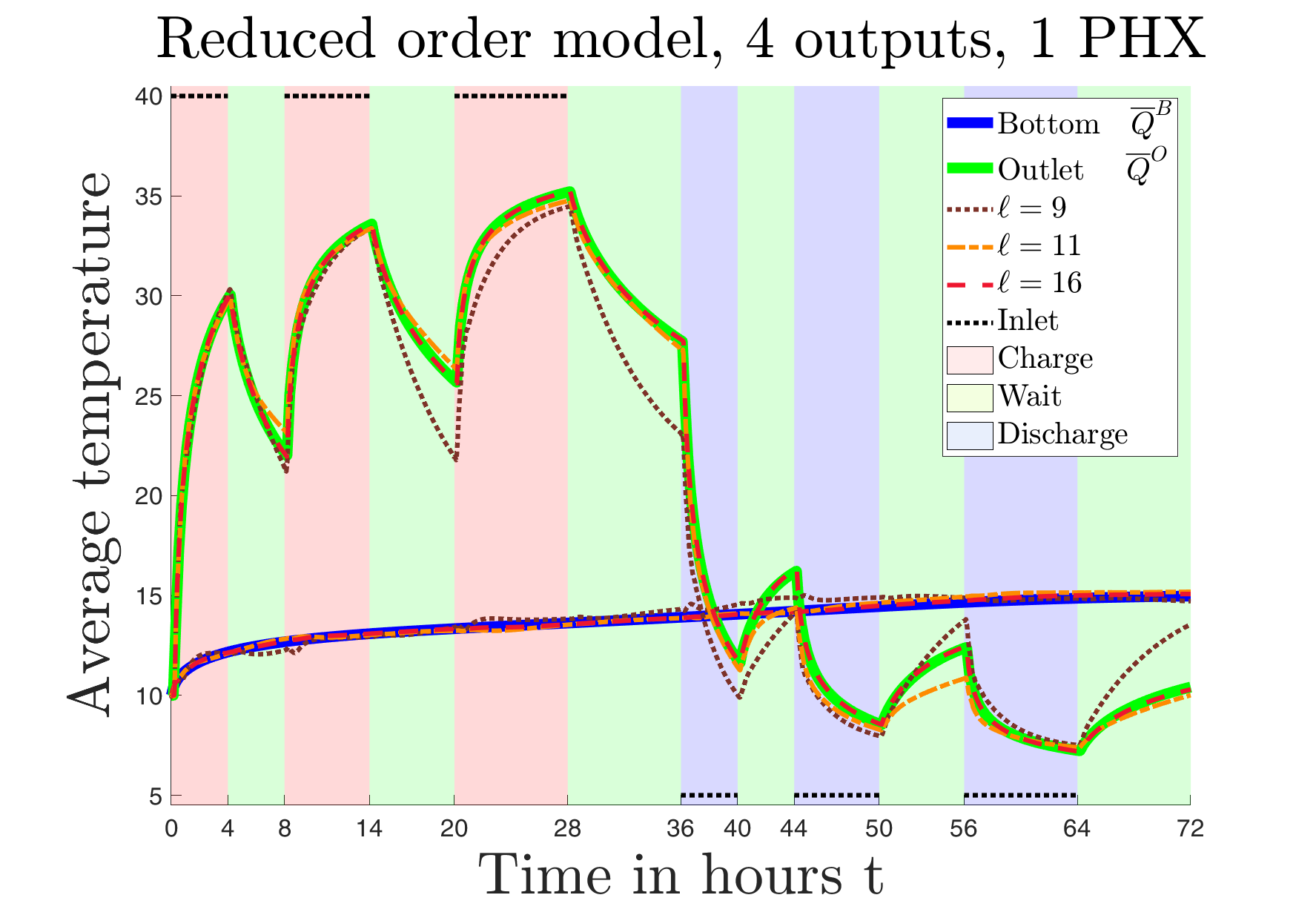}
				\includegraphics[width=.49\linewidth]{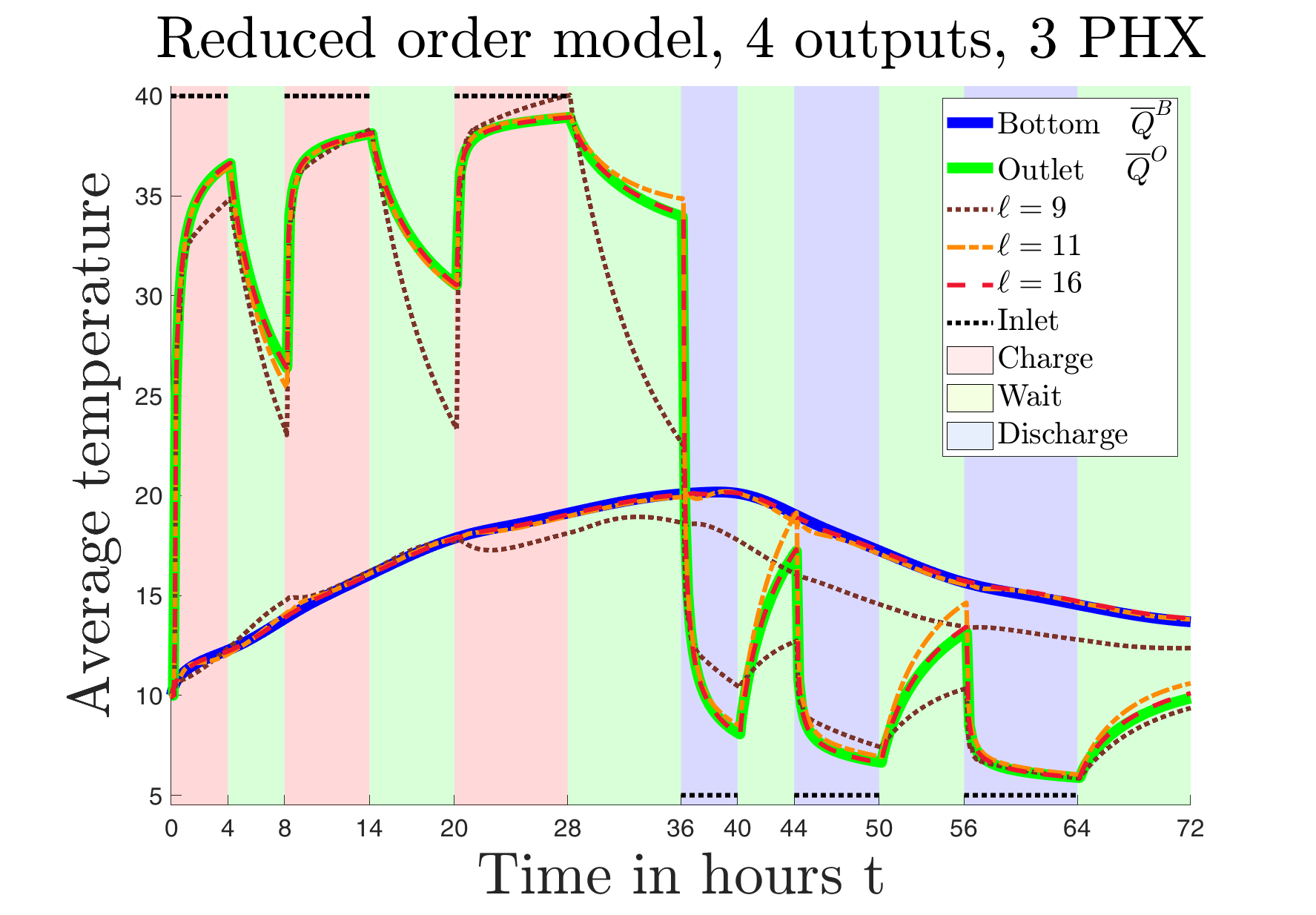}
				
				\mycaption{Model with four outputs $Z=(\Qm,\Qf,\Qout\Qbottom)^\top$: ~ Approximation of the output for $\dimred=9,11,16$. \newline  
					\begin{tabular}[t]{ll}
						Top: & Average temperatures in the medium $\Qm$ and the fluid $\Qf$,\\
						Bottom:&   Average temperatures at the outlet $\Qout$ and the bottom boundary $\Qbottom$,\\
						Left:&  one \phx, Right three \phxsk. 	
					\end{tabular}											
				}
				\label{BT3Cmfob2}
			\end{figure}
			Fig.~\ref{BT3Cmfob2}  depicts the output variables of the original and reduced-order system which are plotted against  time. In the top panels the average temperatures $Z_1(t)=\Qm(t)$ and $Z_2(t)=\Qf(t)$ in the medium and fluid are drawn as solid blue and green lines, respectively. The bottom panel shows the average temperatures at the outlet $Z_3(t)=\Qout(t)$ and at the bottom boundary $\Qbottom$ by a blue and green line, respectively. The reduced-order approximations  are drawn   for  $\dimred=9,11,16$.

			The results for the first three outputs $\Qm,\Qf,\Qout$ are similar to the experiment with those three outputs considered in  Subsec.~\ref{subsec:num_ex3}.  The approximation of the fourth output variable $\Qbottom$ is quite good and comparable to the results in Subsec.~\ref{subsec:num_ex3b}. For the model with 3 \phxs we notice some visible errors for the smallest order $\dimred=9$.
			
			Finally,   Fig.~\ref{error3Cmo} shows  for the reduced orders $\dimred$ considered above the $\Ltwo$-error  $\|Z-\widetilde{Z}\|_{\Ltwo(0,t)}$ which plotted against time $t$ together with the  error bounds from Theorem \ref{theo_errorbound}. The results are similar to Fig.~\ref{error3Cmf2} and we refer for the interpretation to the end of  Subsec.~\ref{subsec:num_ex2}.

			\begin{figure}[!h]
				\centering
				\includegraphics[width=.49\linewidth]{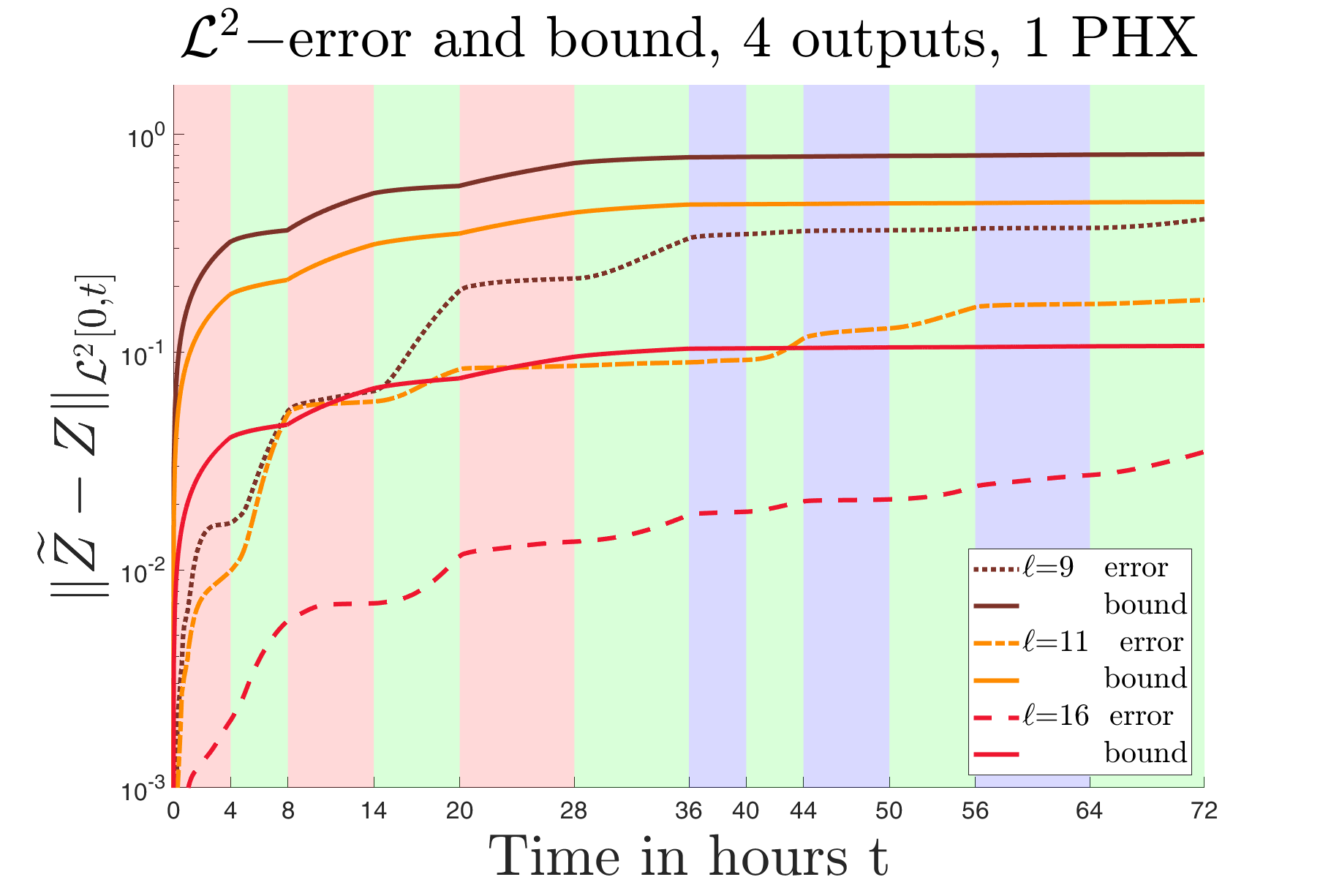}
				\includegraphics[width=.49\linewidth]{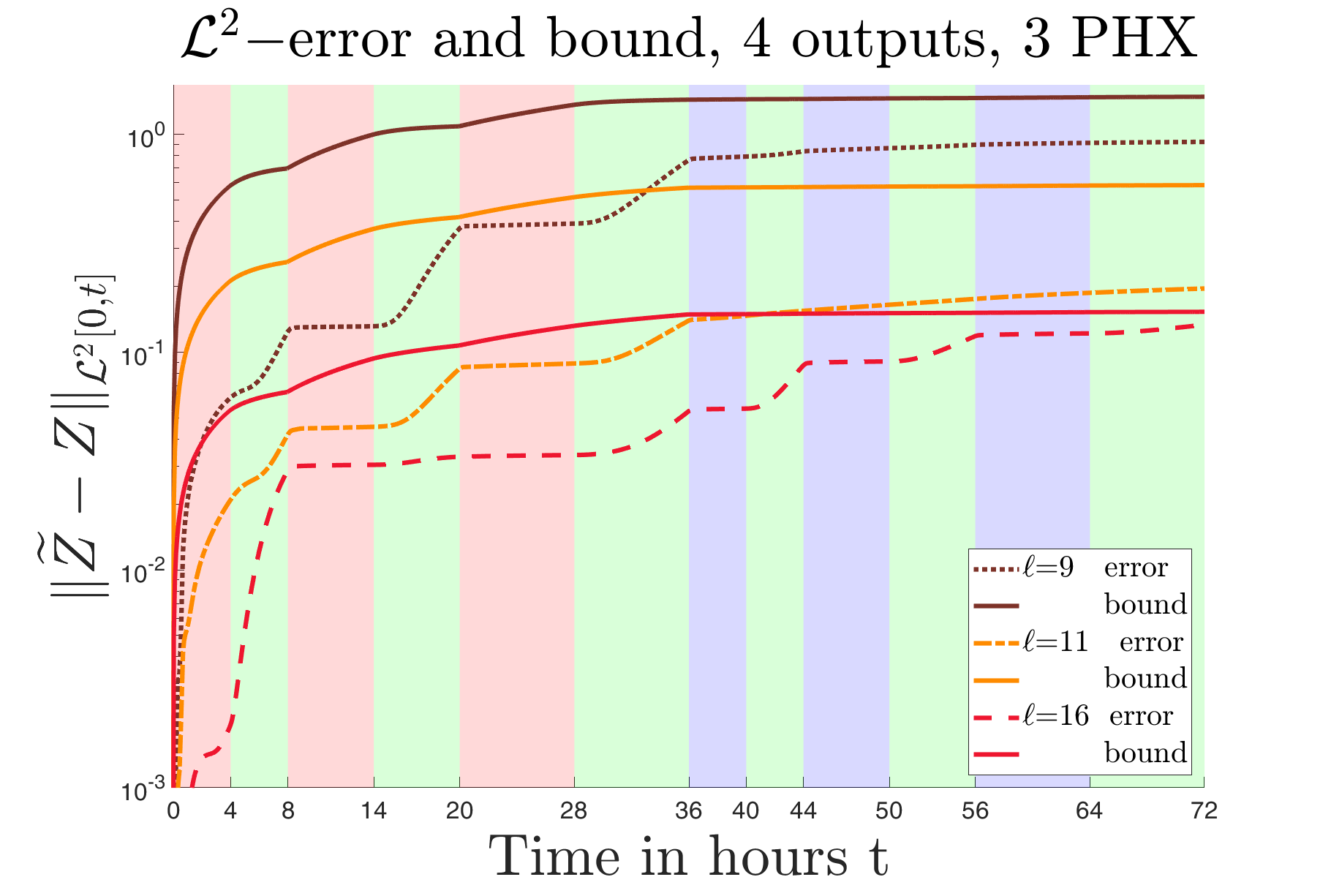}
				\caption{Model with four outputs $Z=(\Qm,\Qf,\Qout,\Qbottom)^\top$:\newline  $\Ltwo$-error and error bound for~$\dimred=9,11,16$.\quad   Left: one \phx, Right three \phxsk. 	}	
				\label{error3Cmfob2}
			\end{figure}
			
			\section{Conclusion}
			\label{conclu}
			
			In this paper we have considered the approximate description of the input-output behavior of a geothermal storage by a low-dimensional system of linear ODEs. Starting point was the mathematical modeling of the spatio-temporal temperature distribution in a two-dimensional cross section of the storage by a linear heat equation with a convection term.	
			By semi-discretization of that PDE w.r.t.~spatial variables we obtained a high-dimensional system of non-autonomous ODEs. The latter was approximated by an analogous LTI system. Reduced-order models in which the state dynamics is described by a low-dimensional system of linear ODEs were derived by the Lyapunov balanced truncation method. In our numerical experiments we considered  aggregated characteristics describing the input-output behavior of the storage  which are required for the operation of the geothermal storage within a residential heating system. The results show that that it is possible to obtain quite accurate approximations from  reduced-order systems with only a few state variables. This allows to treat the cost-optimal management  of residential heating systems as a decision making problem under uncertainty which mathematically can be formulated as a stochastic optimal control problem. Results on that will be published in an forthcoming paper.

			\newpage
			\begin{appendix}
				
				\section{List of Notations}
				\label{append_a}
				
				\begin{longtable}{p{0.3\textwidth}p{0.68\textwidth}l}
					$Q=Q(t,x,y)$ & temperature in the geothermal storage &\\	
					$T$ & finite time horizon&\\
					$l_x$,~$l_y$,~$l_z$ &width, height and depth of the storage &\\
					$\mathcal{D} =(0, l_x) \times (0,l_y)$ &domain of the geothermal storage &\\
					$\Df, ~\Dm$ & domain inside and outside the pipes &\\
					$\DInterface=\DInterfaceL \cup \DInterfaceU$ & interface between the pipes and the medium (dry soil)&\\
					$\partial \mathcal{D}$ &boundary of the domain&\\
					$\partial \Din$,~$\partial \Dout$ & inlet and outlet boundaries of the pipe&\\
					$\partial \Dleft, \partial \Dright, \partial \Dtop$,~$\partial \Dbottom$  & left, right, top and bottom boundaries of the domain&\\		
					$\mathcal{N}^*_*$  &   subsets of index pairs for grid  points  &\\
					$\mathcal{K}, \overline{\mathcal{K}}$  &   mappings $(i,j)\mapsto l$ 
					of index pairs to single indices&\\
					$v=v_0(t)(v^x, v^y)^{\top}$ & time-dependent velocity vector,&\\
					$\vconst$ &  constant velocity during pumping&\\
					$\cpf$,~$\cpm$ & specific heat capacity of the fluid  and medium &\\
					$\rhof$,~$\rhom$ & mass density of the fluid and medium&\\
					$\kappaf$,~$\kappam$ &thermal conductivity of the fluid and medium&\\
					$\af$,~$\am$& thermal diffusivity of the fluid and medium&\\
					$\heattransfer$ & heat transfer coefficient between storage and  underground &\\
					$Q_0$ &initial temperature distribution of the geothermal storage &\\
					$\Qg$ & underground temperature &\\
					$\Qin, \QinC, \QinD$ & inlet temperature of the pipe,  during charging and discharging, &\\
					$\Qm, \Qf$ & average temperature in the storage medium and fluid&\\
					$\Qout, \Qbottom$  & average temperature at the outlet   and bottom boundary&\\
					$G^*$  &  gain of thermal energy in a certain subdomain&\\
					$I_{C}, I_{W}$,  $I_{D}$  & time interval for charging, waiting, discharging  periods   &\\ 
					$\nabla$, ~~$\Delta=\nabla \cdot \nabla$ & 	 gradient, Laplace operator&\\
					\phx & pipe heat exchanger\\
					$N_x,~N_y$, &  number of grid points in $x,y$-direction&\\
					$h_x, h_y$ & step size in $x$ and $y$-direction &\\
					$n_P$ & number of pipes&\\	
					$\normalvec$& outward normal to the boundary $\partial \mathcal{D}$&\\
					$n$ &   dimension of vector $Y$ &\\	
					$\dimred$& dimension of the reduced-order system&\\
					$\mathds{I}_n$& $n \times n$ identity matrix&\\
					$\mat{A}, \mat{B}, \mat{\omatrix}$ & $n \times n$  system matrix, $n \times m$  input matrix, $n_0 \times n$ output matrix  of original  system &\\
					$\overline{\mat{A}}$, $\overline{\mat{B}}$, 	$\overline{\mat{\omatrix}}$  & $n \times n$  system matrix, $n \times m$  input matrix, $n_0 \times n$ output matrix  of transformed original  system &\\
					$\widetilde{\mat{A}}$, $\widetilde{\mat{B}}$, 	$\widetilde{\mat{\omatrix}}$  & $\dimred \times \dimred$ system matrix,  $\dimred \times m$  input matrix, $n_0  \times \dimred$  output matrix  of the reduced-order system &\\												
					$\mat{D}^{\pm}, ~\mat{A}_{L}, ~\mat{A}_{M},~\mat{A}_{R}$ & block matrices of matrix $\mat{A}$&\\	
					$Y$, 	$\overline{Y}$&$n$ dimensional state  of original and transformed original  system&\\
					$\widetilde{Y}$&$\dimred$ dimensional state   of reduced-order system&\\
					$Z$&$n_o$ dimensional output  of original  system&\\	
					$\widetilde{Z}$&$n_o$ dimensional output  of reduced-order system&\\														
					$g$& input variable of the system&\\
					$\cgram,~\ogram$&controllability and observability Gramians&\\
					$\overline{\cgram},~\overline{\ogram}$& transformed controllability and observability Gramians&\\
					$\mat{\trans}$&invariant transformation &  \\
					$\sigma_i >0$&Hankel singular values&\\
					$\Sigma$&diagonal matrix of Hankel singular values&\\
					$\mat{U,L}$& upper/ lower triangular matrix from Cholesky decomp.~of $\cgram /\ogram$&\\
					$\mat{K}$&orthogonal matrix from the eigenvalue decomposition of $\mat{U}^{{\top}} \ogram \mat{U}$&\\
					$\mat{\mat{W}},~\mat{V}$&unitary matrices from the singular value decomposition &\\											
					$\selcrit(\dimred)$ & selection criterion\\
					$\Ltwo(0,t)$ &  set of square integrable functions on $[0,t]$
				\end{longtable}

				\section{Proofs}
				\label{app_proofs}
				\subsection{Proof of Lemma~\ref{transform_lemma}}
				\label{append_d}														
				Using $\overline{Y}=\trans Y $ and $Y=\trans^{-1}\overline{Y}$ we get $ \dot{Y}=\trans^{-1}\dot{\overline{Y}}$.
				Substituting  into (\ref{sys_org}) implies  
				\begin{align*}
					\trans^{-1}\dot{\overline{Y}}(t)=\mat{A}\trans^{-1}\overline{Y}(t)+ \mat{B}g(t),\quad Z(t) =\mat{\omatrix}\trans^{-1}\overline{Y}(t)															
				\end{align*}
				Left-multiplication of the state equation by $\trans$ leads to 
				\begin{align*}
					\trans\trans^{-1}\dot{\overline{Y}}(t)=\trans\mat{A}\trans^{-1}\overline{Y}(t)+ \trans\mat{B}g(t),\quad 
					Z(t) =\mat{\omatrix}\trans^{-1}\overline{Y}(t).
				\end{align*}
				from which we obtain a transformed system  $														
				\dot{\overline{Y}}(t)=\overline{\mat{A}}~\overline{Y}(t)+ \overline{\mat{B}}g(t),~Z(t) =\overline{\mat{\omatrix}}~\overline{Y}(t)$														
				with 	$\overline{\mat{A}}=\trans\mat{A}\trans^{-1}, ~  \overline{\mat{B}}=\trans\mat{B} ~ \text{and} ~ \overline{\mat{\omatrix}}=\mat{\omatrix} \trans^{-1}.$
				
				\subsection{Proof of Theorem~\ref{theo:gram_fun}}
				\label{proof:theo:gram_fun}
				We first prove $\ofun(y) = y^{\top} \ogram \,y$.
				For zero input $g=0$ and initial state $Y(0)=y$ the state equation of system \eqref{sys_org} has a unique solution $Y(t)=e^{\mat{A}t}y$ and the output is given by  $Z(t)=\mat{\omatrix}Y=\mat{\omatrix}e^{\mat{A}t}y$ for $t \geq 0$. Hence
				\begin{align*}
					y^{\top} \ogram y&=\int_{0}^{\infty} y^{\top}e^{\mat{A}^{\top}t}\mat{\omatrix}^{\top}\mat{\omatrix} e^{\mat{A}t}y dt=\int_{0}^{\infty} (y e^{\mat{A}^{t}}\mat{\omatrix})^{\top}\mat{\omatrix} e^{\mat{A}t}ydt=\int_{0}^{\infty}\norm{\mat{\omatrix} e^{\mat{A}t}y}^2_2dt=\int_{0}^{\infty}\norm{Z(t)}^2_2dt.
				\end{align*}
				The proof of $\cfun(y) = y^{\top} \cgram^{-1} y $ can be derived from the results established in \cite{stykel2002analysis}.

				\subsection{Proof of Theorem~\ref{lyapu_theo}}
				\label{append_c}
				The proof is derived from the one sketched  in Antouslas \cite[Proposition 4.27]{antoulas2005approximation}. 
				Assume that a system matrix $\mat{A}$ is stable, then we have:	
				\begin{align*}
					\mat{A}\cgram+\cgram\mat{A}^{\top}&=\int_{0}^{\infty} \left(\mat{A} e^{\mat{A}t}\mat{B}\mat{B}^{\top} e^{\mat{A}^{\top}t}+e^{\mat{A}t}\mat{B}\mat{B}^{\top} e^{\mat{A}^{\top}t}\mat{A}^{\top}\right)dt
					=\int_{0}^{\infty} \frac{d}{dt}\left(e^{\mat{A}t}\mat{B}\mat{B}^{\top} e^{\mat{A}^{\top}t}\right)dt
					\\	&
					=\lim_{T \to \infty} e^{\mat{A}t}\mat{B}\mat{B}^{\top} e^{\mat{A}^{\top}t}\bigg|_{0}^{T}
					=\lim_{T \to \infty} e^{\mat{A}T}\mat{B}\mat{B}^{\top} e^{\mat{A}^{\top}T}-\mat{B}\mat{B}^{\top}
					=-\mat{B}\mat{B}^{\top}.\\
					\ogram\mat{A}+\mat{A}^{\top}\ogram&=\int_{0}^{\infty} \left( e^{\mat{A}^{\top}t}\mat{\omatrix}^{\top}\mat{\omatrix} e^{\mat{A}t} \mat{A}+ \mat{A}^{\top} e^{\mat{A}^{\top}t}\mat{\omatrix}^{\top}\mat{\omatrix} e^{\mat{A}t}\right)dt
					=\int_{0}^{\infty} \frac{d}{dt}\left(e^{\mat{A}^{\top}t}\mat{\omatrix}^{\top}\mat{\omatrix} e^{\mat{A}t}\right)dt
					\\	&
					=\lim_{T \to \infty}e^{\mat{A}^{\top}t}\mat{\omatrix}^{\top}\mat{\omatrix} e^{\mat{A}t}\bigg|_{0}^{T}
					=\lim_{T \to \infty}e^{\mat{A}^{\top}T}\mat{\omatrix}^{\top}\mat{\omatrix} e^{\mat{A}T}-\mat{\omatrix}^{\top}\mat{\omatrix}
					=-\mat{\omatrix}^{\top}\mat{\omatrix}
				\end{align*}

				\subsection{Proof of Lemma~\ref{transform_Gram}}
				\label{append_Gram} 
				We follow the proof in Antouslas \cite[Sec. 4.3]{antoulas2005approximation}. Let $\cgramtr$ and $\ogramtr$ be the controllability and observability Gramians of the transformed system, respectively. Then  $\cgramtr$ satisfies the following Lyapunov equation:
				\begin{align*}
					0&=\overline{\mat{A}}~\cgramtr+\cgramtr\overline{\mat{A}}^{\top}+\overline{\mat{B}}~\overline{\mat{B}}^{\top}=\trans\mat{A}\trans^{-1}\cgramtr+\cgramtr(\trans\mat{A}\trans^{-1})^{\top}+\trans\mat{B}(\trans\mat{B})^{\top}
				\end{align*}
				Multiplying by $\trans^{-1}$ from left and by $\trans^{-\top}$ from right gives
				\begin{align*}
					0	&=\mat{A}(\trans^{-1}\cgramtr\trans^{-\top})+(\trans^{-1}\cgramtr\trans^{-\top})\mat{A}^{\top}+\mat{B}\mat{B}^{\top}
				\end{align*}
				Comparing with the Lyapunov equation for the Gramian $\cgram$ of the original system which reads as $0=\mat{A}\cgram+\cgram\mat{A}^{\top}+\mat{B}\mat{B}^{\top}$  gives
				$\cgram=\trans^{-1}\cgramtr\trans^{-\top}$ and finally  $\cgramtr=\trans\cgram \trans^{\top}$.\\
				Similar reasoning gives 
				$\ogramtr=\trans^{-{\top}} \ogram \trans^{-1}$. \\Substituting into the product of the transformed Gramians yields	 $\cgramtr\ogramtr=\trans \cgram\ogram \trans^{-1}$.

				\subsection{Proof of Theorem~\ref{theo:balancing_trans}}
				\label{append_e}
				We have to prove that the system is balanced under the transformation $\trans=\Sigma^{\frac{1}{2}}\mat{K}^{{\top}}\mat{U}^{-1}$. For the Gramians of the transformed system, we obtain 
				\begin{align*}
					\cgramtr&=\trans \cgram \trans ^{{\top}}
					=\Sigma^{\frac{1}{2}}\mat{K}^{{\top}}\mat{U}^{-1} \cgram \mat{U}^{-{\top}}\mat{K}\Sigma^{\frac{1}{2}}
					= \Sigma^{\frac{1}{2}}\mat{K}^{{\top}}\mat{U}^{-1}\mat{U}\mat{U}^{\top} \mat{U}^{-{\top}}\mat{K}\Sigma^{\frac{1}{2}}, = \Sigma^{\frac{1}{2}}\mat{K}^{{\top}}\mat{K}\Sigma^{\frac{1}{2}} =\Sigma^{\frac{1}{2}}\Sigma^{\frac{1}{2}}=\Sigma.\\
					\ogramtr&=\trans^{-{\top}} \ogram \trans^{-1}
					=\Sigma^{-\frac{1}{2}}\mat{K}^{{\top}}\mat{U}^{{\top}}\ogram\mat{U}\mat{K}\Sigma^{-\frac{1}{2}}=\Sigma^{-\frac{1}{2}}\mat{K}^{{\top}}\mat{K}\Sigma^2 \mat{K}^{{\top}}\mat{K}\Sigma^{-\frac{1}{2}}=\Sigma^{-\frac{1}{2}} \Sigma^2 \Sigma^{-\frac{1}{2}}=\Sigma.
				\end{align*}
				We used $\cgram=\mat{U}\mat{U}^{\top}$, $~~\mat{U}^{\top} \mat{U}^{-{\top}}=\mat{I}_n= \mat{U}^{-1} \mat{U}$, $~~\mat{U}^{{\top}}\ogram\mat{U}=\mat{K} \Sigma^2 \mat{K}^{{\top}}$ and $\mat{K}^{{\top}}\mat{K}=\mat{I}_n$. 

			\end{appendix}	
			
			\begin{acknowledgements}														
				The authors thank   Martin Bähr (DLR),  Martin Redmann (Martin-Luther University Halle--Wittenberg), Olivier Menoukeu Pamen (University of Liverpool) and Gerd Wachsmuth (BTU Cottbus--Senftenberg) for valuable discussions	that improved this paper.\\
				P.H.~Takam gratefully acknowledges the  support by the German Academic Exchange Service (DAAD) within the project ``PeStO – Perspectives in Stochastic Optimization and Applications''.
			\end{acknowledgements}
			

\begin{thebibliography}{10}
					\footnotesize
					\setlength{\itemsep}{-.85ex plus .05ex}%
					
					\bibitem{amsallem2012stabilization}
					{\sc Amsallem, D., and Farhat, C.}
					\newblock Stabilization of projection-based reduced-order models.
					\newblock {\em International Journal for Numerical Methods in Engineering 91},
					4 (2012), 358--377.
					
					\bibitem{antoulas2005approximation}
					{\sc Antoulas, A.~C.}
					\newblock {\em Approximation of large-scale dynamical systems}.
					\newblock (2005), SIAM.	
					\bibitem{bahr2022efficient}
					{\sc B{\"a}hr, M., and Breu{\ss}, M.}
					\newblock {\em Efficient Long-Term Simulation of the Heat Equation with Application in Geothermal Energy Storage}.
					\newblock {\em Mathematics}, (2022), vol.~10, p.~2309. 
					
					
					
					\bibitem{bahr2017fast}
					{\sc B{\"a}hr, M., Breu{\ss}, M., and Wunderlich, R.}
					\newblock Fast explicit diffiusion for long-time integration of parabolic
					problems.
					\newblock In {\em International Conference of Numerical Analysis and Applied
						Mathematics (ICNAAM 2016)\/} (2017), vol.~1863, p.~410002.
					
					\bibitem{beattie2017model}
					{\sc Beattie C.,  Gugercin, S., and  Mehrmann, V.} Model
					reduction for systems with inhomogeneous initial conditions, \emph{Systems \&
						Control Letters} {99} (2017), 99--106.
					
					\bibitem{benner2014self}
					{\sc Benner, P., K{\"u}rschner, P., and Saak, J.}
					\newblock Self-generating and efficient shift parameters in adi methods for
					large {L}yapunov and sylvester equations.
					\newblock {\em Electronic Transactions on Numerical Analysis (ETNA) 43\/}
					(2014), 142--162.
					
					\bibitem{benner2008numerical}
					{\sc Benner, P., Li, J.-R., and Penzl, T.}
					\newblock Numerical solution of large-scale {L}yapunov equations, {R}iccati
					equations, and linear-quadratic optimal control problems.
					\newblock {\em Numerical Linear Algebra with Applications 15}, 9 (2008),
					755--777.
					
					\bibitem{benner2005dimension}
					{\sc Benner, P., Mehrmann, V., and Sorensen, D.~C.}
					\newblock {\em Dimension reduction of large-scale systems}, vol.~45.
					\newblock (2005), Springer.
					
					\bibitem{benner1999solving}
					{\sc Benner, P., and Quintana-Ort{\'\i}, E.~S.}
					\newblock Solving stable generalized {L}yapunov equations with the matrix sign
					function.
					\newblock {\em Numerical Algorithms 20}, 1 (1999), 75--100.
					
					\bibitem{benner2000balanced}
					{\sc Benner, P., Quintana-Ort{\'\i}, E.~S., and Quintana-Ort{\'\i}, G.}
					\newblock Balanced truncation model reduction of large-scale dense systems on
					parallel computers.
					\newblock {\em Mathematical and Computer Modelling of Dynamical Systems 6}, 4
					(2000), 383--405.
					
					\bibitem{besselink2013comparison}
					{\sc Besselink, B., Tabak, U., Lutowska, A., Van~de Wouw, N., Nijmeijer, H.,
						Rixen, D.~J., Hochstenbach, M., and Schilders, W.}
					\newblock A comparison of model reduction techniques from structural dynamics,
					numerical mathematics and systems and control.
					\newblock {\em Journal of Sound and Vibration 332}, 19 (2013), 4403--4422.
					
					\bibitem{byers1997matrix}
					{\sc Byers, R., He, C., and Mehrmann, V.}
					\newblock The matrix sign function method and the computation of invariant
					subspaces.
					\newblock {\em SIAM Journal on Matrix Analysis and Applications 18}, 3 (1997),
					615--632.
					
					\bibitem{daraghmeh2019balanced}
					{\sc  Daraghmeh, A., Hartmann, C., and Qatanani, N.} Balanced model
					reduction of linear systems with nonzero initial conditions: Singular
					perturbation approximation, \emph{Applied Mathematics and Computation} {353}
					(2019), 295--307.
					
					\bibitem{davis2009controllability}
					{\sc Davis, J.M, Gravagne, I.A., Jackson, B.J. and  Marks~II, R.J.}		
					\newblock {Controllability, observability, realizability, and stability of
						dynamic linear systems.}
					\newblock \emph{Electronic Journal of Differential Equations}, \penalty0
					37 (2009),\penalty0 1--32. 
					
					\bibitem{enns1984model}
					{\sc Enns, D.~F.}
					\newblock Model reduction with balanced realizations: An error bound and a
					frequency weighted generalization.
					\newblock In {\em The 23rd IEEE Conference on Decision and Control\/} (1984),
					IEEE, pp.~127--132.
					
					\bibitem{freund2000krylov}
					{\sc Freund, R.~W.}
					\newblock Krylov-subspace methods for reduced-order modeling in circuit
					simulation.
					\newblock {\em Journal of Computational and Applied Mathematics 123}, 1-2
					(2000), 395--421.
					
					\bibitem{glover1984all}
					{\sc Glover, K.}
					\newblock All optimal hankel-norm approximations of linear multivariable
					systems and their ${L}^\infty$-error bounds.
					\newblock {\em International Journal of Control 39}, 6 (1984), 1115--1193.
					
					\bibitem{gosea2018balanced}
					{\sc Gosea, I.~V., Petreczky, M., Antoulas, A.~C., and Fiter, C.}
					\newblock Balanced truncation for linear switched systems.
					\newblock {\em Advances in Computational Mathematics 44}, 6 (2018), 1845--1886.
					
					\bibitem{guelpa2019thermal}
					{\sc Guelpa, E., and Verda, V.}
					\newblock Thermal energy storage in district heating and cooling systems: A
					review.
					\newblock {\em Applied Energy 252\/} (2019), 113474.
					
					\bibitem{gugercin2004survey}
					{\sc Gugercin, S., and Antoulas, A.~C.}
					\newblock A survey of model reduction by balanced truncation and some new
					results.
					\newblock {\em International Journal of Control 77}, 8 (2004), 748--766.
					
					\bibitem{hackbusch1999sparse}
					{\sc Hackbusch, W.}
					\newblock A sparse matrix arithmetic based on {H}-matrices. part I: Introduction
					to {H}-matrices.
					\newblock {\em Computing 62}, 2 (1999), 89--108.
					
					\bibitem{hammarling1982numerical}
					{\sc Hammarling, S.~J.}
					\newblock Numerical solution of the stable, non-negative definite {L}yapunov
					equation.
					\newblock {\em IMA Journal of Numerical Analysis 2}, 3 (1982), 303--323.
					
					\bibitem{heinkenschloss2011balanced}
					{\sc Heinkenschloss, M., Reis, T.,  and  Antoulas, A.C.} Balanced
					truncation model reduction for systems with inhomogeneous initial
					conditions, \emph{Automatica} {47} (2011), no.~3, 559--564.
					
					\bibitem{hodel1992parallel}
					{\sc Hodel, A.~S., and Poolla, K.}
					\newblock Parallel solution of large {L}yapunov equations.
					\newblock {\em SIAM Journal on Matrix Analysis and Applications 13}, 4 (1992),
					1189--1203.
					
					\bibitem{jaimoukha1994krylov}
					{\sc Jaimoukha, I.~M., and Kasenally, E.~M.}
					\newblock Krylov subspace methods for solving large {L}yapunov equations.
					\newblock {\em SIAM Journal on Numerical Analysis 31}, 1 (1994), 227--251.
					
					\bibitem{KITAPBAYEV2015823}
					{\sc Kitapbayev, Y., Moriarty, J., and Mancarella, P.}
					\newblock Stochastic control and real options valuation of thermal
					storage-enabled demand response from flexible district energy systems.
					\newblock {\em Applied Energy 137\/} (2015), 823 -- 831.
					
					
					\bibitem{kurschner2018balanced}
					{\sc K{\"u}rschner, P.}
					\newblock Balanced truncation model order reduction in limited time intervals
					for large systems.
					\newblock {\em Advances in Computational Mathematics 44}, 6 (2018), 1821--1844.
					
					\bibitem{mehrmann2005balanced}
					{\sc Mehrmann, V., and Stykel, T.}
					\newblock Balanced truncation model reduction for large-scale systems in
					descriptor form.
					\newblock In {\em Dimension Reduction of Large-Scale Systems}. Springer, 2005,
					pp.~83--115.
					
					\bibitem{moore1981principal}
					{\sc Moore, B.}
					\newblock Principal component analysis in linear systems: Controllability,
					observability, and model reduction.
					\newblock {\em IEEE transactions on automatic control 26}, 1 (1981), 17--32.
					
					\bibitem{mullis1976synthesis}
					{\sc Mullis, C., and Roberts, R.}
					\newblock Synthesis of minimum roundoff noise fixed point digital filters.
					\newblock {\em IEEE Transactions on Circuits and Systems 23}, 9 (1976),
					551--562.
					
					\bibitem{penzl1999cyclic}
					{\sc Penzl, T.}
					\newblock A cyclic low-rank smith method for large sparse {L}yapunov equations.
					\newblock {\em SIAM Journal on Scientific Computing 21}, 4 (1999), 1401--1418.
					
					\bibitem{pernebo1982model}
					{\sc Pernebo, L., and Silverman, L.}
					\newblock Model reduction via balanced state space representations.
					\newblock {\em IEEE Transactions on Automatic Control 27}, 2 (1982), 382--387.
					
					\bibitem{redmann2016balancing}
					{\sc Redmann, M.}
					\newblock {\em Balancing Related Model Order Reduction Applied to Linear
						Controlled Evolution Equations with L{\'e}vy Noise}.
					\newblock PhD thesis, Otto-von-Guericke Universit{\"a}t Magdeburg, (2016).
					
					\bibitem{schroder2022balanced}
					{\sc  Schr{\"o}der, C., and  Voigt, M.} Balanced truncation model
					reduction with a priori error bounds for {LTI} systems with nonzero initial
					value, \emph{Journal of Computational and Applied Mathematics} (2022), 114708.
					
					\bibitem{simoncini2007new}
					{\sc Simoncini, V.}
					\newblock A new iterative method for solving large-scale {L}yapunov matrix
					equations.
					\newblock {\em SIAM Journal on Scientific Computing 29}, 3 (2007), 1268--1288.
					
					\bibitem{stykel2002analysis}
					{\sc Stykel, T.}
					\newblock {\em Analysis and numerical solution of generalized {L}yapunov
						equations}.
					\newblock PhD thesis, Institut f{\"u}r Mathematik, Technische Universit{\"a}t,
					Berlin, (2002).
					
					\bibitem{takam2021shorta}
					{\sc Takam, P.~H., Wunderlich, R., and Pamen, O.~M.}
					\newblock Short-term behavior of a geothermal energy storage: Modeling and
					theoretical results.
					\newblock {\em arXiv preprint arXiv:2104.05005 [math.NA]\/} (2021).
					
					\bibitem{takam2021shortb}
					{\sc Takam, P.~H., Wunderlich, R., and Pamen, O.~M.}
					\newblock Short-term behavior of a geothermal energy storage: Numerical
					applications.
					\newblock {\em arXiv preprint arXiv:2104.05116 [math.NA]\/} (2021).
					
					\bibitem{tombs1987truncated}
					{\sc Tombs, M.S., and Postlethwaite, I.}
					\newblock Truncated balanced realization of a stable non-minimal state-space system.
					\newblock {\em International Journal of Control}, 46:4 (1987) 1319-1330.
					
					
					\bibitem{volkwein2013proper}
					{\sc Volkwein, S.}
					\newblock Proper orthogonal decomposition: Theory and reduced-order modelling.
					\newblock {\em Lecture Notes, University of Konstanz 4}, 4 (2013), 1--29.
					
					
				\end{thebibliography}

		\end{document}